
\documentclass[12pt,a4paper,reqno]{amsart}
\usepackage{amsmath}
\usepackage{amsfonts}
\usepackage{amssymb}
\usepackage{bm} 

\newcommand\R{{\mathbf{R}}}
\newcommand\C{{\mathbf{C}}}
\newcommand\Z{{\mathbf{Z}}}

\newcommand\e{{\mathbf{e}}}
\renewcommand\H{{\mathbf{H}}}

\renewcommand\S{{\mathcal{S}}}
\newcommand\SC{{\mathcal{SC}}}

\newcommand\T{{\mathbf{T}}}
\newcommand\E{{\mathrm{E}}}
\newcommand\ESD{{\mathrm{ESD}}}

\newcommand\Dil{{\operatorname{Dil}}}
\newcommand\WM{{\operatorname{WM}}}
\newcommand\WMC{{\operatorname{WMC}}}
\newcommand\Trans{{\operatorname{Trans}}}
\newcommand\Time{{\operatorname{Time}}}
\newcommand\Rev{{\operatorname{Rev}}}
\newcommand\Rot{{\operatorname{Rot}}}
\newcommand\Frame{{\operatorname{Frame}}}
\newcommand\Energy{{\dot{\mathcal{H}^1}}}

\newcommand\loc{{\operatorname{loc}}}
\newcommand\const{{\operatorname{const}}}

\newcommand\eps{{\varepsilon}}

\newcommand\crit{{\operatorname{crit}}}

\parindent = 0 pt
\parskip = 12 pt

\theoremstyle{plain}
  \newtheorem{theorem}[subsection]{Theorem}
  \newtheorem{conjecture}[subsection]{Conjecture}
  
  \newtheorem{proposition}[subsection]{Proposition}
  \newtheorem{lemma}[subsection]{Lemma}
  \newtheorem{corollary}[subsection]{Corollary}

\theoremstyle{remark}

\theoremstyle{definition}
  \newtheorem{definition}[subsection]{Definition}

\include{psfig}

\begin{document}

\title[Global regularity of wave maps VI]{Global regularity of wave maps VI.  Abstract theory of minimal-energy blowup solutions}
\author{Terence Tao}
\address{Department of Mathematics, UCLA, Los Angeles CA 90095-1555}
\email{tao@math.ucla.edu}
\subjclass{35L70}

\vspace{-0.3in}
\begin{abstract}  In \cite{tao:heatwave}, \cite{tao:heatwave2}, \cite{tao:heatwave3}, the global regularity conjecture for wave maps from two-dimensional Minkowski space $\R^{1+2}$ to hyperbolic space $\H^m$ was reduced to the problem of constructing a minimal-energy blowup solution which is almost periodic modulo symmetries in the event that the conjecture fails.  In this paper, we show that this problem can be reduced further, to that of showing that solutions at the critical energy which are either frequency-delocalised, spatially-dispersed, or spatially-delocalised have bounded ``entropy''.  These latter facts will be demonstrated in the final paper \cite{tao:heatwave5} in this series.
\end{abstract}

\maketitle

\section{Introduction}

This paper is a technical component of a larger program \cite{tao:heatwave} to establish large data global regularity for the initial value problem for two-dimensional wave maps into hyperbolic spaces.   To explain this problem, let us set out some notation.

Let $\R^{1+2}$ be Minkowski space $\{ (t,x): t \in \R, x \in \R^2 \}$
with the usual metric $g_{\alpha \beta} x^\alpha x^\beta := -dt^2 + dx^2$.  We shall work primarily in $\R^{1+2}$, parameterised by Greek indices $\alpha, \beta = 0,1,2$, raised and lowered in the usual manner.  We also parameterise the spatial coordinates by Roman indices $i,j = 1,2$.  Fix $m \geq 1$, and let $\H = (\H^m,h)$ be the $m$-dimensional hyperbolic space of constant sectional curvature $-1$.  We observe that the Lorentz group $SO(m,1)$ acts isometrically on $\H$.

A \emph{classical wave map} is a pair $(\phi,I)$, where $I$ is a time interval, and $\phi: I \times \R^2 \to \H$ is a smooth map which differs from a constant $\phi(\infty) \in \H$ by a function Schwartz in space (embedding $\H$ in $\R^{1+m}$ to define the Schwartz space), and is a (formal)  critical point of the Lagrangian
\begin{equation}\label{lagrangian}
 \int_{\R^{1+2}} \langle \partial^\alpha \phi(t,x), \partial_\alpha \phi(t,x) \rangle_{h(\phi(t,x))}\ dt dx
\end{equation}
in the sense that $\phi$ obeys the corresponding Euler-Lagrange equation
\begin{equation}\label{cov}
 (\phi^*\nabla)^\alpha \partial_\alpha \phi = 0,
\end{equation}
where $(\phi^*\nabla)^\alpha$ is covariant differentiation on the vector bundle $\phi^*(T\H)$ 
with respect to the pull-back $\phi^*\nabla$ via $\phi$ of the Levi-Civita connection $\nabla$ on the tangent bundle $T\H$ of $\H$.  If $I = \R$, we say that the wave map is \emph{global}.  We observe that wave maps have a conserved energy
\begin{equation}\label{energy-def}
 \E(\phi) = \E(\phi[t]) := \int_{\R^2} \T_{00}(t,x)\ dx = \int_{\R^2} \frac{1}{2} |\partial_t \phi|_{h(\phi)}^2
+ \frac{1}{2} |\nabla_x \phi|_{h(\phi)}^2\ dx
\end{equation}
for all $t \in I$, where $\phi[t] := (\phi(t), \partial_t \phi(t))$ is the data of $\phi$ at $t$.  We let $\WM(I,E)$ denote the collection of all classical wave maps $(\phi,I)$ with energy less than $E$.

We record five important (and well known) symmetries of wave maps:

\begin{itemize}
\item For any $t_0 \in \R$, we have the time translation symmetry 
\begin{equation}\label{time-trans}
\Time_{t_0}: \phi(t,x) \mapsto \phi(t-t_0,x).
\end{equation}
\item For any $x_0 \in \R^2$, we have the space translation symmetry
\begin{equation}\label{space-trans}
\Trans_{x_0}: \phi(t,x) \mapsto \phi(t,x-x_0).
\end{equation}
\item Time reversal symmetry
\begin{equation}\label{time-reverse}
\Rev: \phi(t,x) \mapsto \phi(-t,x).
\end{equation}
\item For every $U \in SO(m,1)$, we have the target rotation symmetry
\begin{equation}\label{rotate}
\Rot_U: \phi(t,x) \mapsto U \circ \phi(t,x).
\end{equation}
\item For every $\lambda > 0$, we have the scaling symmetry
\begin{equation}\label{cov-scaling}
\Dil_\lambda: \phi(t,x) \mapsto \phi(\frac{t}{\lambda}, \frac{x}{\lambda})
\end{equation}
\end{itemize}
In each of these symmetries, the lifespan $I$ of the wave map transforms in the obvious manner (e.g. for \eqref{time-trans}, $I$ transforms to $I+t_0$).  Also observe that the energy \eqref{energy-def} is invariant with respect to all of these symmetries.

Define \emph{classical data} to be any pair $\Phi_0 := (\phi_0,\phi_1)$, where $\phi_0: \R^2 \to \H$ is map which is in the Schwartz space modulo constants (embedding $\H$ in $\R^{1+m}$ to define the Schwartz space), and $\phi_1: \R^2 \to T\H$ is a Schwartz map with $\phi_1(x) \in T_{\phi_0(x)} \H$ for all $x \in \R^2$; let $\S$ denote the space of all classical data, equipped with the Schwartz topology; one should view this space as a non-linear analogue of the linear Schwartz data space $\S(\R^2) \times \S(\R^2)$.  

For future reference, we note that the symmetries \eqref{space-trans}, \eqref{time-reverse}, \eqref{rotate}, \eqref{cov-scaling} induce analogous symmetries on $\S$:
\begin{align}
\Trans_{x_0}: (\phi_0(x), \phi_1(x)) &\mapsto (\phi_0(x-x_0), \phi_1(x-x_0)) \label{space-trans-data}\\
\Rev: (\phi_0(x), \phi_1(x)) &\mapsto (\phi_0(x), -\phi_1(x)) \label{time-reverse-data}\\
\Rot_U: (\phi_0(x), \phi_1(x)) &\mapsto (U\phi_0(x), dU(\phi_0(x))(\phi_1(x))) \label{rotate-data}  \\
\Dil_\lambda: (\phi_0(x), \phi_1(x)) &\mapsto (\phi_0(\frac{x}{\lambda}), \frac{1}{\lambda} \phi_1(\frac{x}{\lambda})) \label{scaling-data}
\end{align}

The purpose of this paper and the other papers \cite{tao:heatwave}, \cite{tao:heatwave2}, \cite{tao:heatwave3}, \cite{tao:heatwave5} in this series is to prove the following conjecture, stated for instance in \cite{klainerman:unified}:

\begin{conjecture}[Global regularity for wave maps]\label{conj}  For every $\Phi_0 \in \S$ there exists a unique global classical wave map $(\phi,\R)$ with $\phi[0] = \Phi_0$.
\end{conjecture}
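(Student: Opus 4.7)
The plan is to prove Conjecture \ref{conj} by contradiction, along the minimal-counterexample paradigm of Kenig--Merle. Suppose some $\Phi_0 \in \S$ has a classical wave map evolution that does not extend globally, and let $E_{\crit}$ denote the infimum of energies at which such a failure occurs. Local existence and small-energy regularity results developed earlier in the series show $0 < E_{\crit} < \infty$, so it suffices to rule out any blowup of energy exactly $E_{\crit}$.

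The first major step, already executed in \cite{tao:heatwave}, \cite{tao:heatwave2}, \cite{tao:heatwave3}, is a concentration-compactness reduction producing a minimal-energy blowup solution $\phi$ of energy $E_{\crit}$ which is almost periodic modulo the symmetries \eqref{time-trans}--\eqref{cov-scaling}: on the maximal interval $I$ of existence there exist parameters $N(t) > 0$, $x(t) \in \R^2$, $U(t) \in SO(m,1)$ such that the orbit of $\phi[t]$, after normalisation by the symmetries encoded by these parameters, traces out a precompact set in $\S$. The contribution of the present paper will be to narrow this class of $\phi$ further.

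My plan for the reduction would be to analyse the trajectories $t \mapsto (N(t), x(t))$ and split into cases according to whether $N(t)$ oscillates across widely separated scales, whether $x(t)$ wanders at a rate dominating $1/N(t)$, and whether the spatial profile stays concentrated at a single bubble or splits. Using a Palais--Smale / profile decomposition argument carried out in the caloric gauge of \cite{tao:heatwave3}, any MEBS that does \emph{not} fall into one of the three delocalisation regimes (frequency-delocalised, spatially-dispersed, spatially-delocalised) should be decomposable into strictly sub-critical pieces, each globally regular by minimality of $E_{\crit}$, which is a contradiction. Combined with the bounded-entropy estimates for the three remaining regimes to be established in \cite{tao:heatwave5}, the blowup of $\phi$ is contradicted and the conjecture follows.

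The principal obstacle is the geometric nonlinearity: because $\H^m$ is curved, one cannot directly apply Littlewood--Paley decompositions to $\phi$, and all frequency-space surgery must be routed through the pulled-back connection $\phi^*\nabla$ from \eqref{cov} and the caloric/heat-flow gauge developed in the earlier papers. A secondary difficulty is verifying that the profile decomposition is compatible with all five symmetries \eqref{time-trans}--\eqref{cov-scaling} simultaneously, so that the extracted sub-critical pieces actually lie in $\WM(\R, E)$ for some $E < E_{\crit}$; this is where the precompactness afforded by almost-periodicity modulo the full symmetry group does the decisive work.
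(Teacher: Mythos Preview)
Your outline has the logical architecture of the series inverted, and this is a genuine gap rather than a cosmetic issue.

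First, the attribution is backwards. The papers \cite{tao:heatwave}, \cite{tao:heatwave2}, \cite{tao:heatwave3} do \emph{not} carry out the concentration-compactness construction of an almost periodic minimal-energy blowup solution; they establish the \emph{Liouville/rigidity} direction, namely that any almost periodic maximal Cauchy development must have zero energy. The construction of the almost periodic solution (Conjecture~\ref{minimal}) is precisely the task of the present paper together with \cite{tao:heatwave5}. So the sentence ``The contribution of the present paper will be to narrow this class of $\phi$ further'' misidentifies what is being done here: this paper is not refining an already-produced MEBS, it is producing it.

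Second, your dichotomy is reversed. You write that a MEBS which does \emph{not} fall into one of the delocalisation regimes ``should be decomposable into strictly sub-critical pieces''. In fact it is exactly the \emph{delocalised} configurations (frequency-delocalised, spatially-dispersed, spatially-delocalised) that are handled by splitting into sub-critical pieces; that is the content of Theorems~\ref{freqbound}, \ref{spacbound}, \ref{spacdeloc}, which show such data have bounded $(A,1)$-entropy and hence cannot arise along a sequence witnessing criticality. What remains after ruling these out is a sequence that is uniformly \emph{localised} in both frequency (via the $\ESD$) and space (via $\T_{00}$); one then invokes the compactness result (Proposition~\ref{precom}) to extract a limit in $\Energy$, and shows that the resulting maximal Cauchy development is almost periodic. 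No profile decomposition in the Bahouri--G\'erard sense is used; the argument runs through the $(A,\mu)$-entropy formalism and the heat-flow-based $\ESD$.

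Finally, you omit the closing step: once the almost periodic MEBS of energy $E_{\crit}>0$ exists, the contradiction comes from the Liouville theorem of the earlier papers, not from any further decomposition. Without that rigidity input your argument has no mechanism to eliminate the localised scenario.
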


There has been extensive prior progress on this conjecture, see for instance \cite[Chapter 6]{tao:cbms} or \cite{krieger:survey} for a survey.

In fact we will prove a stronger statement than Conjecture \ref{conj}.  In \cite{tao:heatwave2} an energy space was constructed with the following properties:

\begin{theorem}[Energy space]\label{energy-claim}\cite{tao:heatwave2}  There exists a complete metric space $\Energy$ with a continuous map $\iota: \S \to \Energy$, that obeys the following properties:
\begin{itemize}
\item[(i)] $\iota(\S)$ is dense in $\Energy$.
\item[(ii)] $\iota$ is invariant under the action \eqref{rotate-data} of the rotation group $SO(m,1)$, thus $\iota(\Rot_U \Phi) = \iota(\Phi)$ for all $\Phi \in \S$. Conversely, if $\iota(\Phi) = \iota(\Psi)$, then $\Psi = \Rot_U(\Phi)$ for some $U \in SO(m,1)$.
\item[(iii)] The actions \eqref{space-trans-data}, \eqref{time-reverse-data}, \eqref{scaling-data} on $\S$ extend to a continuous isometric action on $\Energy$ (after quotienting out by rotations as in (ii)).
\item[(iv)] The energy density $\T_{00}: \S \to L^1(\R^2)$ (defined in \eqref{energy-def}) extends to a continuous map $\T: \Energy \to L^1(\R^2)$ (again after quotienting out by rotations as in (ii)).  In particular, we have a continuous energy functional $\E: \Energy \to [0,+\infty)$.
\item[(v)] If $\Phi \in \Energy$ has zero energy, thus $\E(\Phi)=0$, then $\Phi$ is constant (or more precisely, $\Phi = \iota(p,0)$ for any $p \in \H$). 
\end{itemize}
\end{theorem}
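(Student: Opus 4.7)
The plan is to realize $\Energy$ as the metric completion of the rotation-quotient $\S/SO(m,1)$ with respect to a distance built from the \emph{harmonic map heat flow} into $\H$ and an associated \emph{caloric gauge}. Given $\Phi = (\phi_0,\phi_1) \in \S$, extend $\phi_0$ to a one-parameter family $\phi(s,\cdot)$ by the intrinsic heat flow $\partial_s \phi = (\phi^*\nabla)^j \partial_j \phi$, which smooths $\phi_0$ while converging to its asymptotic constant in $\H$ as $s \to \infty$. Next, construct an orthonormal frame $e(s,\cdot)$ on $\phi(s,\cdot)^* T\H$ which is parallel along the flow direction and normalized at $s = \infty$, and pull back $\partial_j \phi_0$ and $\phi_1$ through $e(0)$ to obtain $\R^m$-valued \emph{differentiated fields} $(\psi_x, \psi_t)$. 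Since the frame is unique up to the residual $SO(m)$ stabilizer of the normalization, and since applying $\Rot_U$ to $\Phi$ moves the asymptotic constant in a way that is compensated by a frame change, the $SO(m)$-orbit of $(\psi_x, \psi_t)$ depends only on the $SO(m,1)$-class of $\Phi$. Define $d(\Phi, \Psi)$ as the infimum over this $SO(m)$ ambiguity of the $L^2(\R^2)$-distance between the resulting fields, and set $\Energy$ to be the metric completion.

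The next step is to verify continuity of $\iota$ and of the auxiliary structures. Continuity of $\iota$ on $\S$ requires parabolic estimates for the tension-field heat equation and well-posedness of the $s = \infty$ normalization of the frame, both of which are classical at Schwartz regularity. Density (i) is then immediate from the construction of $\Energy$ as a completion. For (ii), forward invariance under $\Rot_U$ is built in, while the converse — two Schwartz data with identical $(\psi_x, \psi_t)$ must differ by some $U \in SO(m,1)$ — follows by reconstructing $\phi_0$ from $(\psi_x, \psi_t)$ via parallel transport of the frame along paths in $\R^2$ (using the connectedness of $\R^2$ and the path-independence forced by the zero-curvature identities satisfied by the differentiated fields). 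Properties (iii) and (iv) reduce to routine identifications of how the symmetries and the stress-energy density act on $(\psi_x, \psi_t)$: $\Trans_{x_0}$, $\Rev$, and $\Dil_\lambda$ act by translation, sign-flip of $\psi_t$, and rescaling respectively, and $\T_{00}$ becomes the pointwise quadratic $\frac{1}{2}(|\psi_x|^2 + |\psi_t|^2)$, which is continuous as a map from $L^2$-fields into $L^1$ on bounded sets and hence, via the uniform continuity on each ball together with density, extends continuously to $\Energy$.

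Property (v) follows from (iv) and (i): if $\E(\Phi) = 0$ then $\T(\Phi) = 0$ in $L^1$, so any Schwartz approximant of $\Phi$ has $(\psi_x, \psi_t)$ tending to zero in $L^2$, forcing $\partial_x \phi_0 \to 0$ and $\phi_1 \to 0$ along the approximants; the only possible limit in $\Energy$ is then $\iota(p,0)$ for some constant $p \in \H$. The core obstacle — and the reason the construction is deferred to \cite{tao:heatwave2} rather than sketched \emph{in situ} — is that all of the above must be carried out not just on $\S$ but continuously with respect to the energy-space topology on $(\psi_x, \psi_t)$. This amounts to establishing well-posedness of the harmonic map heat flow and the caloric gauge at the critical $\dot H^1$ regularity: a critical parabolic problem for target $\H^m$ that requires the negative sectional curvature of $\H$ and genuinely nonlinear frequency-localized estimates, rather than classical Sobolev theory.
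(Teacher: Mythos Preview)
Your proposal follows the same overall strategy as the paper's construction in Section~\ref{engen}: extend the data by harmonic map heat flow, place a caloric gauge (frame parallel in $s$, normalised at $s=\infty$), pass to the $\R^m$-valued differentiated fields, and define $\Energy$ as the metric completion of $SO(m,1)\backslash\S$ with respect to an $L^2$-type distance on those fields, quotiented by the residual $SO(m)$ ambiguity. The one substantive difference is the choice of which fields carry the metric: you use the $L^2(\R^2)$ distance on $(\psi_x(0),\psi_t(0))$, whereas the paper (following \cite{tao:heatwave2}) uses $(\psi_s,\psi_t(0))$ measured in $L^2(\R^+\times\R^2,ds\,dx)\times L^2(\R^2)$; see \eqref{ldef}, \eqref{psipsi}. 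The two choices agree on the energy itself via the heat-flow dissipation identity $\tfrac12\|\psi_x(0)\|_{L^2}^2=\int_0^\infty\|\psi_s(s)\|_{L^2}^2\,ds$, and with the $\dot H^1$-critical stability of the flow (which, as you correctly note, is the real work deferred to \cite{tao:heatwave2}) they should induce the same completion. Your metric is conceptually cleaner and makes property~(iv) immediate, since $\T_{00}=\tfrac12(|\psi_x|^2+|\psi_t|^2)$ is manifestly $L^2\to L^1$ continuous. The paper's choice, by contrast, builds the nonlinear Littlewood--Paley structure directly into the metric through the $s$-variable, which is what makes the energy spectral distribution \eqref{esd-def} and the tightness arguments of Section~\ref{compact-sec} flow naturally; with your metric those later steps would require first recovering control of $\psi_s$ from $\psi_x(0)$.
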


The energy space is analogous to the linear energy space $\dot H^1(\R^2) \times L^2(\R^2)$, and its construction will be reviewed in Section \ref{engen}.  In \cite{tao:heatwave3}, the following local well-posedness claim in that space was shown:

\begin{theorem}[Large data local-wellposedness in the energy space]\label{lwp-claim}  For every time $t_0 \in \R$ and every initial data $\Phi_0 \in \Energy$ there exists a \emph{maximal lifespan} $I \subset \R$, and a \emph{maximal Cauchy development} $\phi: t \mapsto \phi[t]$ from $I \to \Energy$, which obeys the following properties:
\begin{itemize}
\item[(i)] (Local existence) $I$ is an open interval containing $t_0$.
\item[(ii)] (Strong solution) $\phi: I \to \Energy$ is continuous.
\item[(iii)] (Persistence of regularity) If $\Phi_0 = \iota( \tilde \Phi_0 )$ for some classical data $\tilde \Phi_0$, then there exists a classical wave map $(\tilde \phi, I)$ with initial data $\tilde \phi[0] = \tilde \Phi_0$ such that $\phi[t] = \iota(\tilde \phi[t])$ for all $t \in I$.  (Note that $\tilde \Phi_0$, $\tilde \phi$ are ambiguous up to the action of the rotation group $SO(m,1)$, but this ambiguity disappears after applying $\iota$.)
\item[(iv)] (Continuous dependence)  If $\Phi_{0,n}$ is a sequence of data in $\Energy$ converging to a limit $\Phi_{0,\infty}$, and $\phi^{(n)}: I_n \to \Energy$ and $\phi_\infty: I_\infty \to \Energy$ are the associated maximal Cauchy developments on the associated maximal lifespans, then for every compact subinterval $K$ of $I_\infty$, we have $K \subset I_n$ for all sufficiently large $n$, and $\phi^{(n)}$ converges uniformly to $\phi$ on $K$ in the $\Energy$ topology.
\item[(v)] (Maximality)  If $t_* \in \R$ is a finite endpoint of $I$, then $\phi(t)$ has no convergent subsequence in $\Energy$ as $t \to t_*$.
\end{itemize}
\end{theorem}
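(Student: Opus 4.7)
The plan is to bootstrap from a quantitative local wellposedness and stability theory for \emph{classical} data (which I would expect to already have been developed earlier in the series, in \cite{tao:heatwave3}) and use density together with a stability estimate to extend everything to the energy space. The essential classical inputs I would need are: (a) for every classical datum $\tilde{\Phi}_0$ and every time $t_0$ there exists a maximal classical wave map $(\tilde{\phi}, I_{\mathrm{classical}})$ through $\tilde{\Phi}_0$, together with a ``controlling'' spacetime norm whose boundedness on a subinterval implies the solution extends past the endpoints of that subinterval; and (b) a stability/perturbation statement saying that if two classical solutions have initial data close in the $\Energy$-metric (modulo rotations) and one of them has bounded controlling norm on some interval $J$, then so does the other on $J$, and the two stay $\Energy$-close uniformly on $J$.

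Granting these, I would construct the maximal lifespan $I$ and the development $\phi$ as follows. By Theorem~\ref{energy-claim}(i), choose classical approximants $\tilde{\Phi}_{0,n}$ with $\iota(\tilde{\Phi}_{0,n}) \to \Phi_0$ in $\Energy$, and let $(\tilde{\phi}_n, I_n)$ be the resulting maximal classical wave maps from (a). Define $I$ to be the set of times $t$ such that, on every compact subinterval $K \subset \R$ with endpoints $t_0, t$, the controlling norm of $\tilde{\phi}_n$ on $K$ stays bounded uniformly in $n$ for $n$ sufficiently large (so in particular $K \subset I_n$). By (b), $\iota(\tilde{\phi}_n[t])$ is then Cauchy in $\Energy$ for $t \in I$, and we set $\phi(t)$ to be the $\Energy$-limit. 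A standard continuity argument using a small-data local theory and (b) shows $I$ is open and contains $t_0$, yielding (i).

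Properties (ii)--(iv) follow essentially formally from the construction. Continuity (ii) is inherited from the continuous maps $t \mapsto \iota(\tilde{\phi}_n[t])$ via uniform-on-compacta convergence, using Theorem~\ref{energy-claim}(ii) to absorb the $SO(m,1)$-ambiguity once and for all. Persistence of regularity (iii) is the special case $\tilde{\Phi}_{0,n} = \tilde{\Phi}_0$, in which the limit $\phi$ automatically agrees with $\iota(\tilde{\phi})$ on the (open) lifespan $I_{\mathrm{classical}}$, and one then checks $I = I_{\mathrm{classical}}$ by applying (b) near an endpoint. Continuous dependence (iv) is obtained by applying the stability estimate (b) directly to classical approximants of both $\Phi_{0,n}$ and $\Phi_{0,\infty}$, choosing the approximations along a suitable diagonal.

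The main obstacle is maximality (v), and I would expect the bulk of the work to go there. I would argue by contradiction: suppose $t_*$ is a finite endpoint of $I$ and $\phi(t_{k}) \to \Psi$ in $\Energy$ along some $t_k \to t_*$. Apply the construction above to the initial value problem with data $\Psi$ at time $t_*$, producing a Cauchy development on an open neighborhood $J$ of $t_*$ by parts (i)--(ii) already established. Using the continuous dependence (iv) and the stability estimate (b), glue this local development to $\phi$ just to the left of $t_*$, producing a strict extension of $\phi$ past $t_*$ that is still realized as a limit of classical solutions, contradicting the definition of $I$. The delicate point is that the gluing must be performed at the level of the $\Energy$-quotient (the rotation ambiguity of each classical lift $\tilde{\phi}_n$ can drift with $n$), so one needs a formulation of (b) and of the local theory that is genuinely $SO(m,1)$-equivariant; most of the technical care in the argument will be spent verifying that such an equivariant formulation is available and that the associated controlling norm behaves well under time-translation and under the passage to the energy-space limit.
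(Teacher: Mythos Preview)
This theorem is not proved in the present paper: it is quoted from \cite{tao:heatwave3}, as the sentence immediately preceding the statement makes explicit (``In \cite{tao:heatwave3}, the following local well-posedness claim in that space was shown''). There is therefore no proof in this paper to compare your proposal against.

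Your sketch is a reasonable high-level outline of the standard strategy for such results, and you correctly anticipate that the classical local theory and stability estimates must be imported from \cite{tao:heatwave3}. But the content of the theorem lies precisely in establishing your inputs (a) and (b) in the wave maps setting, which is the substance of that earlier paper; granting those, the passage to the energy space via density and stability is indeed largely formal, as you describe. If you want to assess your proposal against an actual proof, you would need to consult \cite{tao:heatwave3} directly.
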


A restriction of a maximal Cauchy development to a subinterval will be referred to as an \emph{energy-class solution}.

In view of Theorem \ref{lwp-claim}, Conjecture \ref{conj} follows from

\begin{conjecture}[Global well-posedness in the energy space]\label{conj2}  Every maximal Cauchy development in Theorem \ref{lwp-claim} is global, i.e. the maximal lifespan $I$ is always equal to $\R$.
\end{conjecture}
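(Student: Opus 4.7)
The plan is to adapt the Kenig--Merle concentration-compactness/rigidity strategy to the geometric setting of wave maps into $\H$. First I would introduce a scattering/entropy functional $\Scatter(\phi)$ on energy-class solutions with the property that a uniform bound on $\Scatter$ yields both global existence and scattering, and define a critical energy $E_{\crit}\in(0,\infty]$ as the supremum of $E$ such that $\Scatter$ is uniformly bounded on all energy-class solutions of energy at most $E$. The small-energy theory of earlier papers in the series gives $E_{\crit}>0$, and it suffices to show $E_{\crit}=\infty$; assume for contradiction that $E_{\crit}<\infty$.

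Next, I would extract a minimal-energy blowup solution. Take a sequence of energy-class solutions with energies converging to $E_{\crit}$ and $\Scatter\to\infty$, build a profile decomposition for initial data in $\Energy$ along the symmetry group generated by \eqref{space-trans-data}, \eqref{time-reverse-data}, \eqref{scaling-data}, and combine it with a nonlinear perturbation theory (the content of \cite{tao:heatwave}--\cite{tao:heatwave3}) to pass to a subsequential limit. This should produce a blowup solution $\phi_\infty$ of energy exactly $E_{\crit}$ whose orbit under translations and scalings is precompact in $\Energy$, i.e.\ almost periodic modulo symmetries.

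Then I would classify $\phi_\infty$. Modulo symmetries it must either (a) be frequency-delocalised, (b) be spatially-dispersed, (c) be spatially-delocalised, or (d) be concentrated at a single space-frequency scale (a soliton-like profile). The present paper will reduce the conjecture to showing that scenarios (a)--(c) have uniformly bounded $\Scatter$ -- a task deferred to \cite{tao:heatwave5} -- since in each case this contradicts $\Scatter(\phi_\infty)=\infty$. Scenario (d) must be eliminated by a separate rigidity argument, for instance a Morawetz-type monopole identity for the stress-energy tensor $\T$ of Theorem \ref{energy-claim}(iv) combined with almost periodicity; this should force $\E(\phi_\infty)=0$, whence $\phi_\infty$ is constant by Theorem \ref{energy-claim}(v), contradicting minimality.

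The main obstacle is the profile decomposition and nonlinear perturbation theory in the curved energy space $\Energy$. Because $\H$ is a nonlinear target, one cannot directly invoke a linear Bahouri--Gérard decomposition; instead, one must build a decomposition compatible with the hyperbolic geometry (e.g.\ via a heat-flow caloric gauge) and verify that nonlinear profiles can be superposed with small error. Rigidity in case (d) is a secondary obstacle: the negative sectional curvature of $\H$ is essential, since flat or positively curved targets admit genuine harmonic-map solitons that would obstruct the argument.
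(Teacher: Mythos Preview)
Your high-level architecture---define a critical energy via an entropy/scattering functional, extract a minimal-energy almost periodic blowup solution, then kill it by rigidity---is exactly the Kenig--Merle scheme the paper follows. But the mechanism you propose for the extraction step is not the one the paper uses, and this is the heart of the matter.

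You propose to build a Bahouri--G\'erard profile decomposition in $\Energy$ and then superpose nonlinear profiles. The paper explicitly avoids this route, remarking that the linear profile decomposition is not available in the function spaces needed for wave maps, and that superposition of maps into $\H$ has no direct meaning. Instead, the paper proceeds as follows: take a sequence $(\phi^{(n)},I_n)$ with energies $\to E_{\crit}$ and entropy $\to\infty$; use Theorems~\ref{freqbound}, \ref{spacbound}, \ref{spacdeloc} contrapositively to show that at \emph{every} time in (the middle half of) $I_n$, the data $\phi^{(n)}[t]$ must be uniformly localised in both frequency (measured by the heat-flow based energy spectral distribution $\ESD$) and in physical space. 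This uniform localisation, combined with an Arzel\`a--Ascoli/tightness argument for the caloric-gauge fields $\psi_s,\psi_t$ (Proposition~\ref{precom}), directly yields precompactness of the normalised data in $\Energy$, with no profile splitting needed. In other words, scenarios (a)--(c) are not alternatives in a classification of the \emph{limit}; they are obstructions to compactness of the \emph{sequence}, and ruling them out is precisely what produces the almost periodic limit.

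Two further points. First, the rigidity step (your scenario (d)) is not carried out in this paper; the Liouville theorem for almost periodic solutions was already established in \cite{tao:heatwave}--\cite{tao:heatwave3}, so the present paper only needs to manufacture the almost periodic object (Conjecture~\ref{minimal}). Second, the obstacle you correctly flag---profile decomposition in a curved target---is not overcome here but sidestepped: the $\ESD$-based compactness replaces the profile machinery entirely for the extraction, while the genuinely nonlinear ``superposition'' issues are pushed into the proofs of Theorems~\ref{freqbound}, \ref{spacbound}, \ref{spacdeloc} in the sequel \cite{tao:heatwave5}, where the heat-flow resolution $\phi\mapsto\psi_s$ serves as the scalarisation device.
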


To explain how we are to prove Conjecture \ref{conj2}, we need some further notation.  

\begin{definition}[Almost periodicity]\label{ap-def} An energy class solution $\phi: I \to \Energy$ is \emph{almost periodic} if there exists functions $N: I \to (0,+\infty)$ and $x: I \to \R^2$, and a compact set $K \subset \Energy$ such that
$$ \Dil_{N(t)} \Trans_{-x(t)} \phi[t] \in K$$
for all $t \in I$.  
\end{definition}

In \cite{tao:heatwave}, \cite{tao:heatwave2}, \cite{tao:heatwave3} a ``Liouville theorem'' was established, which asserted that all almost periodic maximal Cauchy developments necessarily had zero energy.  Thus, to establish Conjecture \ref{conj2} (and thus Conjecture \ref{conj}), it suffices to show

\begin{conjecture}[Minimal energy blowup solution]\label{minimal}  Suppose that Conjecture \ref{conj2} fails.  Then there exists an almost periodic maximal Cauchy development $\phi: I \to \Energy$ with non-zero energy.
\end{conjecture}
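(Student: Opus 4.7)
The plan is to execute a Kenig--Merle-style concentration-compactness argument on the nonlinear metric space $\Energy$ furnished by Theorems \ref{energy-claim} and \ref{lwp-claim}, taking as black-box input the three entropy bounds on critical-energy solutions that the abstract defers to \cite{tao:heatwave5}. First I would introduce an ``entropy'' functional $\Scatter$ on energy-class solutions (morally a critical scattering norm) which is invariant under the symmetries \eqref{time-trans}--\eqref{cov-scaling}, finite on small-data solutions, and forces global existence in the sense that $\Scatter(\phi) < \infty$ implies $I = \R$. Defining
\[ E_{\crit} := \sup \{ E \geq 0 : \Scatter(\phi) < \infty \text{ for every energy-class } \phi \text{ with } \E(\phi) \leq E \}, \]
the small-energy theory from \cite{tao:heatwave}, \cite{tao:heatwave3} gives $E_{\crit} > 0$, while failure of Conjecture \ref{conj2} forces $E_{\crit} < \infty$.

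Next I would choose a sequence $\phi^{(n)} : I_n \to \Energy$ of maximal Cauchy developments with $\E(\phi^{(n)}) \to E_{\crit}$ and $\Scatter(\phi^{(n)}) \to \infty$, and use \eqref{time-trans}, \eqref{time-reverse} to normalise so that the entropy diverges on both sides of $t = 0$. A Bahouri--G\'erard-style profile decomposition applied to $\phi^{(n)}[0] \in \Energy$, modulo the continuous symmetries generated by \eqref{space-trans-data} and \eqref{scaling-data}, produces profiles $\psi_j \in \Energy$, asymptotically orthogonal parameters $(\lambda^{(n)}_j, x^{(n)}_j)$, and a remainder that is dispersively small in the iterated limit $n \to \infty$, $J \to \infty$. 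Energy decoupling then gives $\sum_j \E(\psi_j) \leq E_{\crit}$.

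Were more than one profile to carry positive energy, each would satisfy $\E(\psi_j) < E_{\crit}$ and hence evolve with finite entropy by definition of $E_{\crit}$. A nonlinear superposition plus stability argument --- relying on Theorem \ref{lwp-claim}(iv), the orthogonality of scales, and a perturbation theory for wave maps --- would then uniformly bound $\Scatter(\phi^{(n)})$, contradicting the construction. Thus exactly one profile $\psi_\infty$ carries all of $E_{\crit}$, and its maximal Cauchy development $\phi_\infty$ is a non-global critical-energy solution with infinite entropy. Applying the same single-profile extraction to the sequence $\phi_\infty[t_n]$ for an arbitrary $t_n \in I_\infty$ then produces parameters $(N(t_n), x(t_n))$ along which $\Dil_{N(t_n)} \Trans_{-x(t_n)} \phi_\infty[t_n]$ converges in $\Energy$; this furnishes the compact set $K$ of Definition \ref{ap-def}, and since $\E(\phi_\infty) = E_{\crit} > 0$, the development $\phi_\infty$ is the desired almost periodic one.

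The main obstacle is the nonlinear profile decomposition plus superposition step: the curved target $\H$ admits no intrinsic linear structure, so a linear Bahouri--G\'erard decomposition must be transferred through the heat-flow / caloric-gauge construction of $\Energy$ from \cite{tao:heatwave2}, and the asymptotic orthogonality of symmetry parameters must survive this nonlinear translation. In parallel, the three failure modes identified in the abstract --- frequency-delocalised, spatially-dispersed, and spatially-delocalised critical-energy solutions --- are precisely what obstructs the single-profile compactness in the argument above; rather than verifying their bounded-entropy control here, the present paper axiomatises it and defers the actual verification to \cite{tao:heatwave5}.
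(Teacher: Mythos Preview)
Your outline follows the Kenig--Merle template faithfully, but the paper does \emph{not} execute a Bahouri--G\'erard profile decomposition in $\Energy$ at all, and this is a deliberate structural choice rather than an omission. The obstacle you correctly flag---that there is no intrinsic linear structure on $\Energy$ in which to decompose into profiles and then nonlinearly superimpose them---is exactly what the paper's argument is designed to avoid.

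Instead of extracting profiles, the paper argues by direct localisation. It introduces the heat-flow-based energy spectral distribution $\ESD(\Phi)$ (Section~\ref{esd-sec}) as a nonlinear substitute for Littlewood--Paley square functions, and then shows by pigeonhole and contradiction that the critical sequence $\phi^{(n)}[t]$ must be uniformly frequency-localised at every time (Proposition~\ref{freqloc}): if not, one manufactures a frequency gap and invokes Theorem~\ref{freqbound} directly to bound the entropy, contradicting the choice of sequence. The spatial side is handled by Bourgain's middle-interval trick: one quarters $I_n$, works on the inner half, and uses Theorems~\ref{spacbound} and~\ref{spacdeloc} (again by contradiction) to force spatial concentration and then spatial localisation (Propositions~\ref{spatloc-0}, \ref{spatloc}). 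Once both frequency and spatial localisation are established at every $t$ in the middle half, a bespoke compactness criterion in $\Energy$ (Proposition~\ref{precom}, proved via tightness and Arzel\`a--Ascoli for the caloric-gauge fields $\psi_s,\psi_t$) yields the limit and the almost periodicity simultaneously.

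So the three black-box theorems are not used, as in your sketch, to rule out multi-profile configurations after a decomposition; they are used \emph{in place of} the decomposition, to force single-bump localisation directly. What this buys is that no nonlinear superposition or profile orthogonality in $\Energy$ is ever needed in the present paper---all of that difficulty is pushed into the proofs of Theorems~\ref{freqbound}--\ref{spacdeloc} in \cite{tao:heatwave5}, where it is handled at the level of the scalar resolved field $\psi_s$ rather than at the level of $\Energy$. Your approach would work in principle if a profile decomposition in $\Energy$ were available, but establishing one is at least as hard as the route the paper takes, and the paper's route has the advantage of isolating the genuinely nonlinear steps into three concrete entropy statements.
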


The current paper, together with its sequel \cite{tao:heatwave5}, will be concerned with the proof of Conjecture \ref{minimal}.

The phenomenon that failure of global well-posedness in the energy class implies the existence of an almost periodic minimal-energy blowup solution is now well established in the literature, starting with the breakthrough work of Bourgain \cite{borg:scatter}, \cite{borg:book} which introduced an induction on energy argument, which was later interpreted in \cite{gopher} as constructing an approximately almost-periodic, nearly-blowing up solution with approximately the minimal energy.  In \cite{merlekenig} a simpler and cleaner method to construct genuinely almost-periodic minimal-energy blowup solutions, based on linear and nonlinear profile decompositions, was introduced.  These constructions were initially for the energy-critical non-linear Schr\"odinger equation, but have since been extended to a variety of other critical semilinear wave and dispersive equations.  Intuitively, the key point is that if a solution at the minimal energy required for blowup is not behaving in an almost periodic manner (thus breaking up into two or more non-trivial components which are widely separated in frequency or position), then one should be able to model this solution as the superposition of two widely separated (and thus almost non-interacting) components of strictly smaller energy, which one can assume to obey good scattering hypotheses.  A suitable application of perturbation theory will then show that the minimal-energy blowup solution also enjoy scattering bounds, contradicting the blowup.

There are a number of new difficulties encountered when trying to adapt these arguments to the wave map setting.  One obvious difficulty is that the function spaces used for the local well-posedness theory (essentially from \cite{tataru:wave2}, \cite{tao:wavemap2}) are much more complicated than the Strichartz-type spaces used in the semilinear theory, and in particular the linear profile decomposition is not known in these spaces.  Another problem is that the iteration scheme used to construct solutions in \cite{tao:heatwave3} is far more intricate than the contraction mapping arguments used in the semilinear theory, so the perturbation theory becomes more complicated.  Finally, the wave map equation is not a scalar equation, and it does not make much geometric sense to talk about the superposition of two or more wave maps, or to use cutoff functions to decompose wave maps into components.  This makes quite difficult to even state a nonlinear profile decomposition, let alone try to prove such a decomposition.  

These technical difficulties will be addressed in the sequel \cite{tao:heatwave5} to this paper.  In the current paper, we shall reduce Conjecture \ref{minimal} to four simpler results (Theorems \ref{symscat}, \ref{freqbound}, \ref{spacbound}, \ref{spacdeloc}) concerning the scattering theory for wave maps, and how that theory interacts with various delocalisation hypotheses on the initial data; see Theorem \ref{mainthm} for a precise statement.  These four results will then be proven in \cite{tao:heatwave5}.

In order to properly state the four results that we are reducing Conjecture \ref{minimal} to, we need two additional concepts, namely the notions of an $(A,\mu)$-wave map $(\phi,I)$, and of the energy spectral distribution $\ESD(\phi_0,\phi_1)$ of some data $(\phi_0,\phi_1) \in \S$; we now pause to discuss these concepts.

\subsection{$(A,\mu)$-wave maps}

The first ingredient required to tackle Conjecture \ref{minimal} is an improved ``scattering'' version of the local well-posedness theory from Theorem \ref{lwp-claim}, in which the solution behaves close to linearly (in particular, being stable) on rather large time intervals, with the stability and linearity improving as one decreases the interval.

To motivate this theory, let us use as an analogy the simpler (and intensively studied) \emph{energy-critical non-linear Schr\"odinger (NLS) equation}
\begin{equation}\label{nls}
 i u_t + \Delta u = |u|^4 u
\end{equation}
in three spatial dimensions, thus $u: I \times \R^3 \to \C$ for some time interval $I$.  This equation is known to be locally well-posed in the energy space $\dot H^1(\R^3)$, in the sense that a theorem analogous to Theorem \ref{lwp-claim} holds; see e.g. \cite{cwI}.  However, one can make a more quantitative version of the local well-posedness theory as follows.  Firstly, the local solutions to \eqref{nls} constructed in \cite{cwI} are not only in the energy space, but obey the additional regularity properties\footnote{Strictly speaking, the $L^2_t \dot W^{1,6}_x$ bound requires the endpoint Strichartz estimate \cite{tao:keel} which only appeared subsequently to \cite{cwI}, but let us ignore this technicality.}
$$ \| u \|_{L^2_t \dot W^{1,6}_x(I \times \R^3)} + \| u \|_{L^{10}_{t,x}(I \times \R^3)} < \infty.$$
Furthermore, if the $L^{10}_{t,x}$ norm on $I \times \R^3$ is sufficiently small compared to the energy $E = E(u) := \int_{\R^3} \frac{1}{2} |\nabla u|^2 + \frac{1}{6} |u|^6\ dx$ of $u$, thus
$$ \| u\|_{L^{10}_{t,x}(I \times \R^3)} \leq \eta$$
for some small $\eta = \eta(E) > 0$, then the NLS equation enjoys many of the same properties as the linear equation, for instance one has persistence of regularity bounds
$$ \| u(t)\|_{H^s(\R^3)} \leq 2 \| u(t_0) \|_{H^s(\R^3)}$$
for any $t,t_0 \in I$ and any fixed $s > 1$, and one also has a stability theory in which perturbations $v(t_0)$ of the initial data $u(t_0)$ which are close in the sense that
$$ \| v(t_0) - u(t_0) \|_{\dot H^1(\R^3)} \leq \eps$$
for some small $\eps = \eps(E) > 0$ depending only on $E$ will lead to another solution $v$ of the NLS \eqref{nls} on $I \times \R^3$ with
$$ \| v(t) - u(t) \|_{\dot H^1(\R^3)} \leq 2 \| v(t_0) - u(t_0) \|_{\dot H^1(\R^3)}$$
for all $t \in I$.  (See for instance \cite{gopher} for a proof of these claims.)  The key difference between these results and the more traditional local well-posedness results of the type in Theorem \ref{lwp-claim} is that the $\eps$ parameter depends only on the energy $E$, and not on the interval $I$ or solution $u$; in particular they can potentially be extended to arbitrarily long time intervals as long as the $L^{10}_{t,x}$ norm remains small.

In practice, however, we expect the $L^{10}_{t,x}$ norm to be large (but finite).  But one can iterate the small $L^{10}_{t,x}$ theory to generate a large $L^{10}_{t,x}$ theory, by using the trivial but fundamentally important observation that the $L^{10}_{t,x}$ norm is \emph{divisible}.  By this we mean that if one has a bound $\| u \|_{L^{10}_{t,x}(I \times \R^3)} \leq M$ for some compact time interval $I$ and some finite $M > 0$, and $\eta > 0$ is any small parameter, then one can subdivide $I$ into finitely many intervals $I_1,\ldots,I_k$ such that $\| u \|_{L^{10}_{t,x}(I_j \times \R^3)} \leq \eta$ for all $1 \leq j \leq k$; furthermore, the number $k$ of such intervals is controlled by a quantity depending only on $M$ and $\eta$ (specifically, one has $k \leq (M/\eta)^{10}$).  The divisibility of the $L^{10}_{t,x}$ norm then easily allows one to generate a large $L^{10}_{t,x}$ perturbation theory, which is essential for establishing the counterpart of Conjecture \ref{minimal} for that equation; see \cite{gopher} for details.  In contrast, norms which contain $L^\infty_t$ type components, such as the energy norm $L^\infty_t \dot H^1_x$, are not divisible, thus for instance there is no easy way to derive the large energy perturbation theory directly from the small energy theory.

The key building block in the above theory was the concept of an ``almost linear'' solution: a pair $(u,I)$ which obeyed various spacetime bounds on the slab $I \times \R^3$, but for which a certain crucial and divisible norm (the $L^{10}_{t,x}$ norm was small).  For wave maps, we will abstract this concept by introducing the notion of an \emph{$(A,\mu)$-wave map} for various choices of parameters $A, \mu > 0$, where one should think of $A$ as being bounded and $\mu$ as being small. A $(A,\mu)$-wave map will be a pair $(\phi,I)$, where $I$ is a compact non-empty time interval and $\phi: I \to \S$ is a classical wave map on $I$ of energy at most $A$ obeying certain estimates relating to the parameters $A, \mu$.  The precise estimates required are complicated to state and will be deferred to the sequel \cite{tao:heatwave5} of this paper.  However, as a rough approximation, these estimates are asserting that a certain\footnote{This is a very crude caricature of the situation.  In particular, the maps $\phi: I \times \R^3 \to \H$ do not form a vector space, and so the use of the term ``norm'' is, strictly speaking, inaccurate.} ``norm'' $\| \phi \|_{S^1(I \times \R^2)}$ of the wave map is bounded by $A$, and for which a certain key ``controlling norm'' $\| \phi \|_{S^1_*(I \times \R^2)}$, which is divisible, is bounded by the smaller quantity $\mu$; the analogue for NLS would be a solution $u: I \times \R^3 \to \C$ to \eqref{nls} whose $L^\infty_t \dot H^1_x(I \times \R^3)$ and $L^2_t \dot W^{1,6}_x(I \times \R^3)$ norms were bounded by $A$, and whose $L^{10}_{t,x}(I \times \R^3)$ norm was bounded by $\mu$.  Very informally, one should think of an $(A,\mu)$-wave map as a wave map which is ``within $\mu$'' of behaving like a solution to the linear wave equation at energy $A$.

For any $A > 0$, $0 < \mu \leq 1$ and any classical wave map $(\phi,I)$ with $I$ compact, define the \emph{$(A,\mu)$-entropy} of $(\phi,I)$ to be the least number $m$ of compact intervals $I_1,\ldots,I_m \subset I$ needed to cover $I$, such that the restrictions $(\phi\downharpoonright_{I_j},I_j)$ are $(A,\mu)$-wave maps for $1 \leq j \leq m$.  (The finiteness of this entropy will be established in Corollary \ref{finsiz} below.)

In \cite{tao:heatwave5} we will show that the concept of an $(A,\mu)$-wave map defined in that paper obeys the following key properties (see Section \ref{notation-sec} below for the asymptotic notation conventions used in this paper).

\begin{theorem}[Properties of $(A,\mu)$-wave maps]\label{symscat}  For every $A, \mu > 0$ there exists a class of classical wave maps $(\phi,I)$ on compact time intervals with the following properties:
\begin{itemize}
\item[(i)] (Monotonicity) If $(\phi,I)$ is an $(A,\mu)$-wave map for some $A > 0$ and $0 < \mu \leq 1$, then it is also a $(A',\mu')$-wave map for every $A' \geq A$ and $\mu' \geq \mu$.  Also, $(\phi\downharpoonright_J,J)$ is an $(A,\mu)$-wave map for every $J \subset I$.
\item[(ii)] (Symmetries) If $(\phi,I)$ is an $(A,\mu)$-wave map for some $A > 0$ and $0 < \mu \leq 1$, then any application of the symmetries \eqref{time-trans}-\eqref{cov-scaling} to $\phi$ (and $I$) is also a $(A,\mu)$-wave map.
\item[(iii)] (Divisibility)  If $(\phi,I)$ is an $(A,\mu)$-wave map, with $\mu$ sufficiently small depending on $A$, and $0 < \mu' \leq 1$, then $(\phi,I)$ has an $(O(A),\mu')$-entropy of $O_{A,\mu,\mu'}(1)$.
\item[(iv)] (Continuity)  Let $A > 0$, and let $0 < \mu < 1$ be sufficiently small depending on $A$.  If $I$ is a compact interval, $I_n$ is an increasing sequence of compact sub-intervals which exhaust the interior of $I$, and $(\phi^{(n)},I_n)$ is a sequence of $(A,\mu)$-wave maps which converge uniformly in $\Energy$ on every compact interval of $I$, then the limit extends continuously in $\Energy$ to all of $I$.
\item[(v)] (Small data scattering) For every $0 < \mu \leq 1$ there exists an $E > 0$ such that whenever $I$ is a interval and $(\phi,I)$ is a classical wave map with energy at most $E$, then $\phi$ is a $(\mu,\mu)$-wave map.
\item[(vi)] (Local well-posedness in the scattering size) For every $E > 0$, $0 < \mu \leq 1$, $\Phi \in \Energy$, $t_0\in \R$ with $\E(\Phi) \leq E$, there exists a compact interval $I$ with $t_0$ in the interior, such that for any $(\phi_0,\phi_1) \in \S$ with $\iota(\phi_0,\phi_1)$ sufficiently close to $\Phi$ in $\Energy$, there is a $(O_E(1),\mu)$-wave map $(\phi,I)$ with $\phi[t_0] = (\phi_0,\phi_1)$.
\end{itemize}
\end{theorem}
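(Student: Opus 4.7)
The plan is to extract from the wave maps literature (\cite{tao:wavemap2}, \cite{tataru:wave2}, and the gauge constructions of the earlier papers in the series) a pair of scale-invariant function-space norms $\|\phi\|_{S^1(I \times \R^2)}$ and $\|\phi\|_{S^1_*(I \times \R^2)}$ on compactly-supported classical wave maps $\phi: I \to \S$, and then to simply \emph{define} an $(A,\mu)$-wave map to be a classical wave map on a compact interval $I$ with energy at most $A$, with $\|\phi\|_{S^1(I \times \R^2)} \leq A$, and with $\|\phi\|_{S^1_*(I \times \R^2)} \leq \mu$. The norm $S^1$ must be strong enough to close the iteration of \cite{tao:heatwave3}, while $S^1_*$ must be weak enough to be \emph{divisible}: a bound $\|\phi\|_{S^1_*(I \times \R^2)} \leq M$ should be partitionable into finitely many sub-intervals on which the same norm is bounded by any prescribed $\mu' > 0$, with the number of pieces depending only on $M/\mu'$. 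The prototype for $S^1_*$ is $L^{10}_{t,x}$, built here from low-exponent, high-regularity atoms in the Tataru--Tao decomposition so that subadditivity in time is not spoiled by $L^\infty_t$ ingredients of $S^1$.

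Given these definitions, properties (i) and (ii) are designed to be immediate: the norms decrease under restriction to sub-intervals, and the function spaces are invariant under the symmetries \eqref{time-trans}--\eqref{cov-scaling} provided the heat-flow gauge used to define them is shown to intertwine with each of these symmetries, which is a routine matter reduced to the symmetry-equivariance of the caloric gauge established in \cite{tao:heatwave2}. For (v) (small-data scattering), one would simply invoke the small-energy well-posedness of \cite{tao:wavemap2}: there exists a universal $E_0$ such that any classical wave map of energy below $E_0$ has $\|\phi\|_{S^1} = O(E_0^{1/2})$, which can be driven below $\mu$ by shrinking $E$. For (vi) (local well-posedness at the scattering size), given $\Phi \in \Energy$, one uses Theorem \ref{lwp-claim} to obtain a maximal Cauchy development on some open interval, then picks a sufficiently short closed neighbourhood $I$ of $t_0$ on which the $S^1_*$ norm is small by divisibility, and invokes continuous dependence to transfer the bound to nearby data $(\phi_0,\phi_1) \in \S$.

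The substantive content lies in (iii) and (iv). For (iii), starting from an $(A,\mu)$-wave map $(\phi,I)$ with $\mu$ smaller than a threshold depending on $A$, one runs the large-data iteration of \cite{tao:heatwave3} on successive sub-intervals of $I$ where the $S^1_*$ norm is $\leq \mu'$, controlling the full $S^1$ norm on each sub-interval by a constant multiple of $A$. This gives a covering by $O_{A,\mu,\mu'}(1)$ intervals on which $(\phi,I_j)$ is an $(O(A),\mu')$-wave map. Property (iv) then follows from combining a uniform $S^1$ bound along the sequence $\phi^{(n)}$ with the continuous dependence clause (iv) of Theorem \ref{lwp-claim}: the uniform $S^1_*$ bound $\leq \mu$ rules out concentration of energy at the endpoints of $I$, so the limit extends to a continuous curve in $\Energy$ on all of $I$.

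The main obstacle will be (iii). The $L^{10}_{t,x}$ analogy makes divisibility sound automatic, but for wave maps the candidate controlling norm will necessarily contain null-frame and $X^{s,b}$ ingredients whose standard definitions entangle frequency atoms across time, and so are \emph{not} subadditive over disjoint time slabs. A significant portion of the construction in \cite{tao:heatwave5} must therefore be devoted to choosing $S^1_*$ as a surrogate (for instance, an $\ell^p$-sum of $L^q_t$-norms of frequency-localised components with $q < \infty$) that is at once weak enough to inherit the subadditivity of $L^q_t$ and strong enough to drive the nonlinear bilinear and trilinear estimates of \cite{tao:heatwave3}. Once such an $S^1_*$ is in hand, the remaining properties follow from the now-familiar pattern of small-data theory plus perturbation.
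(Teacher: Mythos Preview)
Your proposal is a faithful reconstruction of the paper's intended approach, but it is important to note that the paper \emph{does not actually prove} Theorem~\ref{symscat}: the proof is explicitly deferred to the sequel \cite{tao:heatwave5}, and in the present paper the theorem functions as a black-box hypothesis for Theorem~\ref{mainthm}. So there is no ``paper's own proof'' to compare against here, only the informal discussion in Section~1.2.

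That said, your sketch lines up closely with what the paper telegraphs. The paper explicitly describes an $(A,\mu)$-wave map as one with a strong norm $\|\phi\|_{S^1} \leq A$ and a divisible controlling norm $\|\phi\|_{S^1_*} \leq \mu$, and remarks that the machinery of \cite{tao:heatwave3} already suffices for all properties except (iii). Your identification of (iii) as the crux, and of the tension between divisibility (needing $L^q_t$ with $q<\infty$) and the null-frame/$X^{s,b}$ structure (which entangles time and frequency), is exactly the obstacle the paper flags. Your treatment of (i), (ii), (v), (vi) as essentially formal consequences of the definition and the small-data theory is also consistent with the paper's remark that these follow from existing machinery.

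One minor point: for (vi) you invoke divisibility to shrink the $S^1_*$ norm on a short interval, but divisibility (iii) itself requires already being an $(A,\mu)$-wave map with $\mu$ small. The paper's intended mechanism for (vi) is more likely a direct small-time smallness of $S^1_*$ coming from the local theory in \cite{tao:heatwave3}, not a bootstrap through (iii); but this is a matter of emphasis rather than a genuine gap, since either route should work once the norms are set up.
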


The local well-posedness machinery in \cite{tao:heatwave3} is already sufficient to build a concept of an $(A,\mu)$-wave map that obeys all the above properties except for the crucial divisibility property (iii).  However, this property is of critical importance to the rest of the argument, and so we will not be able to directly use the theory in \cite{tao:heatwave3}, instead developing a more sophisticated version of that theory in \cite{tao:heatwave5}.

From Theorem \ref{symscat}(vi) and compactness we have

\begin{corollary}[Finiteness of the scattering size]\label{finsiz} Let $\phi: I \to \Energy$ be a maximal Cauchy development with $\E(\phi) \leq E$, and let $\phi^{(n)}: I_n \to \S$ are smooth maximal Cauchy developments that converge to $\phi$ in the sense of Theorem \ref{lwp-claim}(iv).  Let $K$ be a compact subinterval of $I$, and let $0 < \mu \leq 1$.  Then $(\phi^{(n)},K)$ has a $(O_E(1),\mu)$-entropy of $O_{\phi,K,\mu}(1)$ for all sufficiently large $n$.
\end{corollary}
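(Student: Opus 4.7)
The plan is to combine the local well-posedness clause Theorem \ref{symscat}(vi) with a compactness argument on $K$. For each $t \in K$, apply Theorem \ref{symscat}(vi) with $\Phi := \phi[t]$ (which has energy at most $E$) to obtain a compact interval $I_t$ containing $t$ in its interior, together with a neighbourhood $U_t$ of $\phi[t]$ in $\Energy$, such that any classical data $(\phi_0,\phi_1) \in \S$ with $\iota(\phi_0,\phi_1) \in U_t$ launches an $(O_E(1),\mu)$-wave map on $I_t$ with that initial data at time $t$. Using the monotonicity clause Theorem \ref{symscat}(i), we may shrink each $I_t$ to ensure $I_t \subset I$ while retaining $t$ in its interior; this is possible since $I$ is open and contains $K$.

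Compactness of $K$ then yields a finite subcover $\{\operatorname{int}(I_{t_j})\}_{j=1}^m$ of $K$ drawn from the family $\{\operatorname{int}(I_t)\}_{t \in K}$. The integer $m$, the intervals $I_{t_j}$, and the neighbourhoods $U_{t_j}$ depend only on $\phi$, $K$, and $\mu$, and in particular are independent of $n$. The compact set $K^\star := \bigcup_{j=1}^m I_{t_j}$ lies inside $I$, so by Theorem \ref{lwp-claim}(iv) and the hypothesis on $\phi^{(n)}$, for all sufficiently large $n$ we have $K^\star \subset I_n$ and $\iota(\phi^{(n)}[\cdot])$ converges uniformly to $\phi[\cdot]$ on $K^\star$ in $\Energy$. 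Since there are only finitely many values $t_j$, for $n$ large enough we simultaneously have $\iota(\phi^{(n)}[t_j]) \in U_{t_j}$ for every $j = 1, \ldots, m$.

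For such $n$, Theorem \ref{symscat}(vi) produces $(O_E(1), \mu)$-wave maps $(\psi^{(n)}_j, I_{t_j})$ with $\psi^{(n)}_j[t_j] = \phi^{(n)}[t_j]$. By uniqueness of classical wave maps (with the rotation ambiguity absorbed via Theorem \ref{symscat}(ii)), each $\psi^{(n)}_j$ coincides with $\phi^{(n)}\!\downharpoonright_{I_{t_j}}$, so the latter is an $(O_E(1), \mu)$-wave map, and by monotonicity so is its restriction to $I_{t_j} \cap K$. Since the family $\{I_{t_j} \cap K\}_{j=1}^m$ covers $K$, the $(O_E(1), \mu)$-entropy of $(\phi^{(n)}, K)$ is at most $m = O_{\phi, K, \mu}(1)$, as claimed. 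The argument is essentially a routine compactness exercise; the only point requiring mild care is choosing $n$ large enough to meet all $m$ of the threshold conditions at once, which is automatic because $m$ is determined by $\phi$, $K$, $\mu$ before $n$ enters the picture.
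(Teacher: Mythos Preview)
Your argument is correct and is precisely the compactness argument the paper has in mind: the paper itself only says ``From Theorem \ref{symscat}(vi) and compactness we have'' before stating the corollary, and you have faithfully unpacked that sketch, covering $K$ by finitely many intervals on which Theorem \ref{symscat}(vi) applies and then using Theorem \ref{lwp-claim}(iv) plus uniqueness to identify the local $(O_E(1),\mu)$-wave maps with restrictions of $\phi^{(n)}$.
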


Call an energy $E > 0$ \emph{good} if there exists $A,M > 0$, $0 < \mu \leq 1$ such that every classical wave map $(\phi,I)$ with $I$ compact and $\E(\phi) \leq E$ has an $(A,\mu)$-entropy of at most $M$.  Call $E$ \emph{bad} if it is not good. As we shall see in Lemma \ref{quant} below,
Conjecture \ref{minimal} will follow if we can show that every energy is good.

\subsection{Energy spectral distribution}

The second key concept we will need is that of the \emph{energy spectral distribution} $\ESD(\phi_0,\phi_1)$ of an initial data $(\phi_0,\phi_1) \in \S$.  To motivate this concept, let us again return to solutions $u$ to the NLS \eqref{nls}, and specifically to the energy $E(u)$ of such solutions.  If we ignore for sake of discussion the potential energy term $\int_{\R^3} \frac{1}{6} |u|^6\ dx$ of the energy $E(u)$ and focus instead on the kinetic energy $\int_{\R^3} \frac{1}{2} |\nabla u|^2\ dx$ (note that Sobolev embedding morally allows one to dominate the former by the latter), then we can decompose the energy into Littlewood-Paley components, obtaining the heuristic
\begin{equation}\label{eid}
 E(u) \sim \sum_k \| P_k u(t) \|_{\dot H^1(\R^3)}^2
\end{equation}
for any fixed time $t$, where $P_k u$ is a suitable Littlewood-Paley projection to frequencies $\sim 2^k$ (the precise definition of $P_k$ is not important for this discussion).  One can thus view the sequence $\|P_k u(t)\|_{\dot H^1(\R^3)}^2$ for $k \in \Z$ as describing the spectral distribution of energy of $u$ at time $t$ into low, medium, and high frequencies.

A key observation, originating in \cite{borg:scatter}, is that solutions (to \eqref{nls}) whose spectral distribution is \emph{delocalised} in the sense that there is significant energy both at very low and very high frequencies behave as if they were the non-interacting superposition of two solutions of strictly smaller energy (informally, these are the ``low-frequency component'' and ``high-frequency component'' of the original solution).  This observation (which we will mimic for wave maps via Theorem \ref{freqbound} below) is crucial in establishing the analogue of Conjecture \ref{minimal} for NLS; see \cite{gopher} for details.  It is thus desirable to define an analogue of this distribution for wave maps.

Unfortunately, the (linear, discrete) Littlewood-Paley projection operators do not easily apply directly to maps $\phi: \R^2 \to \H$ taking values in hyperbolic space for a number of reasons (such as the exponential volume growth\footnote{In a very recent preprint\cite{sterb} involving wave maps into compact targets, this problem is avoided by embedding the target into Euclidean space in which the standard Littlewood-Paley operators are available.  Hyperbolic space does not embed into Euclidean space directly; however, one can descend to a compact quotient in order to obtain the embedding, which suffices for the purposes of establishing global regularity; we thank Jacob Sterbenz for informing us of this observation, which originates from Joachim Krieger.} of $\H$).  However, it turns out that by using the harmonic map heat flow as a substitute for the linear Littlewood-Paley theory, one can obtain a (nonlinear, continuous) family of ``Littlewood-Paley projection operators'' which serve as an adequate replacement for the purposes of defining an energy spectral density.  More precisely, in Section \ref{esd-sec} below we will define the \emph{energy spectral density} $\ESD(\phi_0,\phi_1): \R^+ \to \R^+$ of a given classical pair of data $(\phi_0,\phi_1) \in \S$, and set out its basic properties.  For now, we list just a single property, namely the energy identity
\begin{equation}\label{energy-ident}
\E(\phi_0,\phi_1) = \int_0^\infty \ESD(\phi_0,\phi_1)(s)\ ds
\end{equation}
which is the counterpart to \eqref{eid}.  Indeed, $\ESD(\phi_0,\phi_1)(s)$ will measure the rate of energy dissipation in the harmonic map heat flow after evolving that flow for time $s$.  The rough analogue of $\ESD(\phi_0,\phi_1)(s)$ for NLS would be the quantity $2^{2k} \| P_k u(t) \|_{\dot H^1(\R^3)}^2$, where $k$ is an integer such that $2^{-2k} \sim s$.   

\subsection{Frequency localisation}

We can now state the three remaining results needed for our conditional proof of Conjecture \ref{minimal}.  The first, which has already been alluded to earlier, is the claim that frequency delocalised data can be controlled by solutions of strictly smaller energy.

\begin{theorem}[Frequency delocalisation implies spacetime bound]\label{freqbound}  Let $0 < E_0 < \infty$ be such that every energy strictly less than $E_0$ is good. Let $\phi^{(n)}[0] \in \S$ be a sequence of initial data with the energy bound
\begin{equation}\label{enbound}
 \E( \phi^{(n)}[0] ) \leq E_0 + o_{n \to \infty}(1)
\end{equation}
and suppose that there exists $\eps > 0$ (independent of $n$) and $K_n \to \infty$ such that we have the frequency delocalisation property
$$
 \int_{1/K_n < s < K_n} \ESD(\phi^{(n)}[0])(s)\ ds = o_{n \to \infty}(1)
$$
and
$$  \int_{s \geq K_n} \ESD(\phi^{(n)}[0])(s)\ ds, \int_{s \leq 1/K_n} \ESD(\phi^{(n)}[0])(s)\ ds \geq \eps$$
for all $n$.  Then there exists $A, M > 0$ independent of $n$ such that for each sufficiently large $n$ and every compact time interval $I \ni 0$, one can extend $\phi^{(n)}[0]$ to a classical wave map $(\phi^{(n)}, I)$ with $(A,1)$-entropy at most $M$.
\end{theorem}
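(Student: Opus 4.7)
The plan is to decompose each initial datum $\phi^{(n)}[0]$ into a low-frequency component and a high-frequency component via the harmonic map heat flow, to apply the goodness hypothesis (available at every energy strictly less than $E_0$) to each component separately, and then to glue the two resulting wave maps back together using a nonlinear perturbation argument whose error is controlled by the frequency gap $K_n \to \infty$.

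More concretely, I would pick a cutoff scale $s_n \in (1/K_n, K_n)$ (e.g.\ $s_n = 1$) and use the heat flow machinery of Section \ref{esd-sec} to produce low- and high-frequency data $(\phi_{\lo}^{(n)}[0], \phi_{\hi}^{(n)}[0]) \in \S \times \S$, characterised by the fact that $\ESD(\phi_{\lo}^{(n)}[0])$ is concentrated on scales $s \geq K_n$ and $\ESD(\phi_{\hi}^{(n)}[0])$ on scales $s \leq 1/K_n$, up to $o_{n \to \infty}(1)$ discrepancies in the energy identity \eqref{energy-ident}. By the frequency delocalisation hypothesis and \eqref{enbound}, each component carries energy at least $\eps$ and at most $E_0 - \eps + o_{n \to \infty}(1)$, and in particular, for all sufficiently large $n$, both energies are bounded above by some fixed $E_0' < E_0$. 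Since every energy below $E_0$ is good, there exist $A_0, M_0 > 0$ and $0 < \mu_0 \leq 1$ (independent of $n$) such that, for any compact $I \ni 0$, each component extends to a classical wave map on $I$ with $(A_0, \mu_0)$-entropy at most $M_0$. Invoking Theorem \ref{symscat}(iii), one can further refine these partitions so that each subinterval supports an $(O(A_0), \mu')$-wave map for an arbitrarily small prescribed $\mu' > 0$, at the cost of multiplying the entropy by $O_{A_0, \mu_0, \mu'}(1)$.

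The principal obstacle is the gluing step. Because maps into $\H$ do not form a vector space, one cannot simply add $\phi_{\lo}^{(n)}$ and $\phi_{\hi}^{(n)}$; instead, I would construct a candidate near-solution $\tilde \phi^{(n)} : I \times \R^2 \to \H$ by using $\phi_{\lo}^{(n)}$ as a background and expressing $\phi_{\hi}^{(n)}$ relative to it via parallel transport along the heat flow, or equivalently by coupling the two maps in the appropriate caloric gauge. Because the two components live on heat-flow scales separated by the factor $K_n \to \infty$, the quadratic and higher interaction terms between them should be forced to be $o_{n \to \infty}(1)$ in the controlling $S^1_*$-type norm associated with the $(A, \mu)$-wave map framework, so that $\tilde\phi^{(n)}$ approximately solves \eqref{cov}. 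A large-data perturbation theorem within that framework (the main technical device of the sequel \cite{tao:heatwave5}) then converts this near-solution into a genuine classical wave map $\phi^{(n)}$ with the prescribed data and with $(O(A_0), 1)$-entropy dominated by the combined entropies of the two components, uniformly in $n$ and in $I$. The delicate part is precisely this perturbation/gluing step: setting up the nonlinear superposition in a gauge that makes the cross-frequency interaction quantitatively small, establishing a stability estimate that is divisible enough to be iterated across the refined partition, and ensuring that the entropy bound obtained at the end depends only on $E_0$ and $\eps$, not on $n$.
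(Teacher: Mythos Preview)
Your outline matches the high-level strategy the paper sketches (the actual proof is deferred to \cite{tao:heatwave5}): split into low- and high-frequency components via the heat flow, evolve each using the goodness hypothesis below $E_0$, and then recombine. You also correctly identify that the nonlinear superposition must be carried out in the caloric/scalar framework $\phi \mapsto \psi_s$ rather than by naive addition.

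The one substantive point you underestimate is the nature of the cross-frequency interaction. You write that ``the quadratic and higher interaction terms between them should be forced to be $o_{n \to \infty}(1)$'' because of the spectral gap $K_n \to \infty$. The paper explicitly warns that this NLS-style intuition fails for wave maps: the low-frequency component sets up a non-trivial connection field with non-trivial curvature, so its effect on the high-frequency component is \emph{not} small, regardless of the gap. What is true is the asymmetric statement that the high-frequency component has negligible effect on the low-frequency one, and that there is no significant energy \emph{transfer} between the two. Accordingly, the paper's strategy is not a single perturbative gluing but an iterative one: evolve the low-frequency component first on all of $I$; use divisibility (Theorem \ref{symscat}(iii)) to cut $I$ into subintervals on which the low component is nearly linear; on each such subinterval evolve the high-frequency component (which retains energy strictly below $E_0$ by the no-transfer argument) on the low-frequency background; then reconstitute and advance to the next subinterval. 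Your proposal would go through once this asymmetry and the interval-by-interval iteration are built in, but as written the claim that the interaction is $o_{n\to\infty}(1)$ in the controlling norm is the gap.
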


This result is analogous to \cite[Proposition 4.3]{gopher} for NLS.  The proof of that proposition proceeded by decomposing (the analogue of) $\phi^{(n)}[0]$ into high and low frequency components of strictly smaller energy (plus a negligible medium-frequency error).  By the hypotheses on $E_0$, one can evolve the high and low frequency components on $I$ separately, with good spacetime bounds on both (in particular, $L^{10}_{t,x}$ bounds).  Furthermore, it turns out that high-low frequency interactions for the energy-critical NLS are negligible, so the superposition of the two solutions will approximately solve the original equation \eqref{nls}, with approximately the right initial data.  A suitable application of long-time perturbation theory then concludes the argument; see \cite{gopher} for full details.

In \cite{tao:heatwave5} we will establish this theorem by a similar argument, in particular relying heavily on the perturbation theory of $(A,\mu)$-wave maps.  However, there are several new technical complications when trying to adapt the NLS argument to wave maps.  Firstly, the non-scalar and non-linear nature of wave maps $\phi: I \times \R^2 \to \H$ means that one cannot ``decompose'' a map into components, or ``superimpose'' those components to reconstitute the original map, in as easy a fashion as can be done in the NLS setting.  This problem can be resolved (though at the cost of significantly lengthening the argument) by once again turning to the harmonic map heat flow to ``resolve'' the non-scalar wave map $\phi$ into a ``scalar'' field $\psi_s: \R^+ \times I \times \R^2 \to \R^m$, to which linear decomposition and superposition tools can be applied; see \cite{tao:heatwave5} for details.  (The strategy of reducing to the scalar field $\psi_s$ is analogous to the method of ``dynamic separation'' used in \cite{krieger:2d}.)

A potentially more serious problem is that the high-low frequency interactions are not fully negligible for wave maps.  More precisely, while the high-frequency component still has a negligible impact on the low frequency component, the converse is not true; the low frequency component sets up a non-trivial connection field (with non-trivial curvature) which distorts the evolution of the high-frequency component (roughly speaking, it causes the latter to evolve by an approximate covariant wave equation rather than by an approximate free wave equation).  However, this interaction does not actually cause any significant energy transfer between high and low frequencies (because the total solution has conserved energy, and the low frequency component, being largely unaffected by the high frequencies, also approximately obeys energy conservation).  As a consequence, it is still possible to execute the strategy in \cite{gopher} by first evolving the low frequency component, then using divisibility to decompose the time interval $I$ into smaller intervals on which the low frequency component is almost linear.  On each such interval, the high frequency component has energy strictly less than $E_0$ (by the energy conservation property) and can be evolved by appealing to the hypotheses of $E_0$ and then reconstituted with the low frequency component by perturbation theory (as well as the resolution $\phi \mapsto \psi_s$ alluded to earlier).  One then iterates this procedure along these intervals to reconstitute the whole solution.  (This strategy is somewhat analogous to the ``Fourier truncation method'' of Bourgain \cite{borg:book} to construct global solutions to sub-critical equations below the energy norm.)

\subsection{Spatial concentration}

The next result we need asserts that if an initial data is dispersed in space, then it can be evolved as if it had strictly lower energy either forward in time or backward in time.

\begin{theorem}[Spatial dispersion implies spacetime bound]\label{spacbound} Let $0 < E_0 < \infty$ be such that every energy strictly less than $E_0$ is good. Let $\phi^{(n)}[0] \in \S$ be a sequence of initial data with the energy bound \eqref{enbound}, which is uniformly localised in frequency in the sense that for every $\eps > 0$ there exists a $C > 0$ (independent of $n$) such that
\begin{equation}\label{spacloc}
  \int_{s \geq C} \ESD(\phi^{(n)}[0])(s)\ ds, \int_{s \leq 1/C} \ESD(\phi^{(n)}[0])(s)\ ds \leq \eps
\end{equation}
for all $n$.  Suppose also that the data is asymptotically spatially dispersed in the sense that
\begin{equation}\label{disp}
 \sup_{x_0 \in \R^2} \int_{|x-x_0| \leq 1} \T_{00}(\phi^{(n)})(0,x)\ dx = o_{n \to \infty}(1),
\end{equation}
where the energy density $\T_{00}$ was defined in \eqref{energy-def}.
Then there exists $A, M > 0$ independent of $n$ such that for each sufficiently large $n$ and every compact time interval $I \ni 0$, one can extend $\phi^{(n)}[0]$ to a classical wave map $(\phi^{(n)}, I_\pm)$ to either the forward time interval $I_+ := I \cap [0,+\infty)$ or the backward time interval $I_- := I \cap (-\infty,0]$ with $(A,1)$-entropy at most $M$.
\end{theorem}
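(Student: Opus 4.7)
The plan is to exploit the scaling symmetry to normalize the frequency-localization window \eqref{spacloc} to $s \sim 1$, so that the data $\phi^{(n)}[0]$ has characteristic wavelength of order $1$. The spatial dispersion hypothesis \eqref{disp} then forces the energy of each $\phi^{(n)}[0]$ to spread over a number of disjoint unit balls that diverges with $n$.

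First I would extract a spatial profile decomposition of $\phi^{(n)}[0]$ in the energy space. Because frequency localization prevents scale degeneration, all profiles $\Phi_j$ sit at a common (unit) scale and are parameterised only by spatial translates $x_j^{(n)} \in \R^2$ with $|x_j^{(n)} - x_k^{(n)}| \to \infty$ for $j \neq k$. This decomposition is carried out through the harmonic map heat flow resolution $\phi \mapsto \psi_s$ (as in the strategy sketched for Theorem \ref{freqbound}), which converts the non-scalar profile extraction into a linear concentration-compactness problem. One obtains an asymptotic orthogonality $\sum_j \E(\Phi_j) + \limsup_n \E(\Psi^{(n)}) \leq E_0$, where $\Psi^{(n)}$ is a remainder of negligible energy in the heat-flow-resolved sense. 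Crucially, \eqref{disp} forces every individual profile to satisfy $\E(\Phi_j) < E_0$ strictly: a single profile of energy near $E_0$ would, at scale $\sim 1$, produce a fixed unit ball of positive energy around $x_j^{(n)}$, contradicting asymptotic spatial dispersion.

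Second, by the inductive hypothesis, each profile $\Phi_j$ of energy strictly below $E_0$ evolves to a classical wave map of some bounded $(A_j,1)$-entropy on any prescribed compact time interval. The remainder $\Psi^{(n)}$, having small energy for large $n$, is handled by the small data scattering of Theorem \ref{symscat}(v) and yields a $(1,1)$-wave map. Finite speed of propagation, together with $|x_j^{(n)} - x_k^{(n)}| \to \infty$, guarantees that on any fixed compact interval $I$ the profile evolutions are essentially non-interacting, so their nonlinear superposition, again assembled via the heat flow resolution, furnishes an approximate classical wave map agreeing with $\phi^{(n)}[0]$ at time $0$ up to arbitrarily small error. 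The perturbation theory of $(A,\mu)$-wave maps from Theorem \ref{symscat} and its elaboration in \cite{tao:heatwave5} then upgrades this approximate solution to a genuine classical wave map $(\phi^{(n)}, I_\pm)$ with $(O(A),1)$-entropy bounded uniformly in $n$, giving the desired $M$.

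The selection of the time direction $I_+$ versus $I_-$ reflects the possibility that a subsequence of profiles exhibits inward radial energy flux in one time direction; one uses a Morawetz-type monotonicity (or simply a pigeonholing over profiles and signs of flux) to pick for each $n$ the direction in which no concentration can develop from the dispersed data. The principal obstacle, as for Theorem \ref{freqbound}, is the nonlinear and non-scalar nature of wave maps, which forbids literal addition of profiles and demands that every step — decomposition, separation, and superposition — be executed through the heat flow resolution and its attendant perturbation theory; this is precisely the machinery being developed in \cite{tao:heatwave5} and applies with only cosmetic modifications to the present setting once the profiles have been arranged to be spatially (rather than frequency) separated.
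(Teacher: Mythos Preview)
Your approach has a genuine gap, and it stems from conflating the \emph{spatial dispersion} scenario of Theorem~\ref{spacbound} with the \emph{spatial delocalisation} scenario of Theorem~\ref{spacdeloc}. A profile decomposition at fixed scale extracts bubbles of concentration; but the hypothesis \eqref{disp} says precisely that no unit ball carries more than $o(1)$ energy. Hence every candidate profile $\Phi_j$ must have energy $o(1)$, and in the limit the decomposition produces \emph{no nontrivial profiles at all}: the entire energy $E_0$ sits in the remainder $\Psi^{(n)}$. Your claim that ``$\Psi^{(n)}$, having small energy for large $n$, is handled by the small data scattering of Theorem~\ref{symscat}(v)'' is therefore false --- the remainder carries essentially the full critical energy, and small data scattering does not apply. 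What one expects (and needs) is that $\Psi^{(n)}$ is small only in some divisible \emph{scattering} norm, not in energy; establishing and exploiting that is exactly the substance of the theorem, and your outline does not address it.

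The paper's strategy (sketched in Section~1.4 and deferred to \cite{tao:heatwave5}) is quite different and explains the $I_\pm$ dichotomy concretely. One studies the \emph{linear} evolution of the frequency-localised, spatially dispersed data. Either this linear evolution stays non-concentrated for all time, in which case the controlling norm is small and perturbation theory closes on all of $I$; or it concentrates at some distant time, say $t_*<0$. In the latter case one works on $I_+$: on the future interval the solution decomposes as the forward evolution of that past bubble (which, being a concentrated wave launched from the distant past, is \emph{dispersed} on $I_+$) plus a residual piece of energy strictly below $E_0$, to which the inductive hypothesis applies. If instead $t_*>0$ one time-reverses and works on $I_-$. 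Your ``Morawetz-type monotonicity or pigeonholing over flux signs'' does not capture this mechanism; the asymmetry between $I_+$ and $I_-$ arises from the \emph{location in time} of the eventual linear concentration (in fact along a light ray, as the paper notes), not from an energy-flux sign.
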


This result is roughly analogous to \cite[Proposition 4.5]{gopher} for NLS.  Informally, the argument in \cite{gopher} proceeds as follows.  One first considers the linear evolution from the initial data (the analogue of $\phi^{(n)}[0]$), which will be frequency-localised by hypothesis.  By the analogue of \eqref{disp} for NLS, one can show that this linear evolution does not concentrate spatially at any time $t$ close to $0$.

Suppose that the linear solution in fact did not concentrate spatially at any time $t$.  In this case, the evolution will be ``small'' in a certain sense (indeed, the $L^{10}_{t,x}$ norm will be small) and the solution can be easily constructed (on both $I_+$ and $I_-$) by perturbation theory.  

Now suppose that the linear solution concentrated spatially at some large negative time $t < 0$.  In that case, we turn to the future portion $I_+$ of the time interval $I$.  The concentration of the linear solution at the distant past implies that one can decompose the linear solution in $I_+$ into the superposition of a dispersed component (relating to the evolution of the concentrated portion of the solution in the past), plus a component of strictly smaller energy than $E_0$.  By choice of $E_0$, the latter can be evolved by the nonlinear equation \eqref{nls} to the whole of $I_+$, and then perturbation theory can be used to paste in the dispersed component and thus recover the claim (on just $I_+$) by perturbation theory.

Finally, if the linear solution concentrated spatially at some large positive time $t > 0$, then we perform the time-reversal of the preceding argument and extend the solution to $I_-$ instead.  (See \cite{gopher} for full details.)

We will prove Theorem \ref{spacbound} in \cite{tao:heatwave5}.  To execute the above strategy we will need the nonlinear resolution $\phi \mapsto \psi_s$ alluded to earlier, as well as the perturbation theory of $(A,\mu)$-wave maps. A minor technical difficulty is that due to our definition of $(A,\mu)$-wave maps, we will not be able to ensure that the linear solution concentrates at a point in spacetime, but rather along a light ray.  This difficulty could be avoided by making the definition of such maps even more complicated, but it turns out that concentration along a light ray suffices for our application, the point being that solutions that are localised to a light ray in the distant past will still be dispersed in the future, and similarly with the roles of past and future reversed.

\subsection{Spatial localisation}

The final ingredient we need is a spatial analogue of Theorem \ref{freqbound}:

\begin{theorem}[Spatial delocalisation implies spacetime bound]\label{spacdeloc} Let $0 < E_0 < \infty$ be such that every energy strictly less than $E_0$ is good. Let $\phi^{(n)}[0] \in \S$ be a sequence of initial data with the energy bound \eqref{enbound}, which is uniformly localised in frequency in the sense of \eqref{spacloc}.  Suppose also that there exists $\eps > 0$ (independent of $n$) and $R_n \to \infty$ such that
$$ \int_{|x| \leq 1} \T_{00}(\phi^{(n)})(0,x)\ dx \geq \eps$$
and
$$ \int_{|x| \geq R_n} \T_{00}(\phi^{(n)})(0,x)\ dx \geq \eps.$$
Then there exists $A, M > 0$ independent of $n$ such that for each sufficiently large $n$ and every compact time interval $I \ni 0$, one can extend $\phi^{(n)}[0]$ to a classical wave map $(\phi^{(n)}, I)$ with $(A,1)$-entropy at most $M$.
\end{theorem}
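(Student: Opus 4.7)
The plan is to imitate the proof of Theorem \ref{freqbound} but splitting in physical space rather than frequency space, exploiting the fact that the wave map equation has finite speed of propagation. The goal is to decompose $\phi^{(n)}[0]$ into two spatially separated components of energy strictly less than $E_0$, evolve each separately via the inductive ``goodness'' hypothesis, and then reconstitute the full solution using the perturbation and divisibility properties of $(A,\mu)$-wave maps from Theorem \ref{symscat}. A key technical advantage over Theorem \ref{freqbound} is that spatially separated pieces do not interact strongly at fixed frequency over a compact time interval, so one does not have to worry about the asymmetric ``low influences high'' connection-field issue that complicated the frequency case.

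The first step is a pigeonholing argument: since the total energy is at most $E_0 + o(1)$, while $\int_{|x|\leq 1} \T_{00}(\phi^{(n)})(0,x)\,dx \geq \eps$ and $\int_{|x|\geq R_n} \T_{00}(\phi^{(n)})(0,x)\,dx \geq \eps$ with $R_n \to \infty$, one can find intermediate radii $1 \ll r_n \ll R_n^{1/2} \ll R_n$ such that the annulus $\{r_n \leq |x| \leq R_n^{1/2}\}$ carries $o_{n\to\infty}(1)$ of the total energy. Hence the inner region $\{|x| \leq r_n\}$ and the outer region $\{|x| \geq R_n^{1/2}\}$ each carry at most $E_0 - \eps + o(1)$ of the energy, which is strictly less than $E_0$ for large $n$.

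Because maps into $\H$ cannot be cut off linearly, we pass to the harmonic map heat flow resolution $\phi \mapsto \psi_s$ developed in \cite{tao:heatwave5}: resolve $\phi^{(n)}[0]$ into the (scalar-valued) differentiated heat flow $\psi_s^{(n)}$, apply a smooth partition of unity to $\psi_s^{(n)}$ adapted to the annulus, and reverse the flow to produce classical data $\Phi_n^{\mathrm{in}}, \Phi_n^{\mathrm{out}} \in \S$. The frequency-localisation hypothesis \eqref{spacloc} guarantees that the ESD of $\phi^{(n)}[0]$ lives on a bounded range of heat-flow scales, so cutting at spatial scale $r_n \to \infty$ introduces no spurious high-frequency energy and the resulting component energies are, up to $o(1)$, equal to the raw region energies; in particular each is uniformly strictly below $E_0$. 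By the inductive hypothesis, on any compact $I \ni 0$ and for any target $\mu$, there exist $A, M > 0$ such that $\Phi_n^{\mathrm{in}}$ and $\Phi_n^{\mathrm{out}}$ extend to classical wave maps $(\phi_n^{\mathrm{in}}, I)$ and $(\phi_n^{\mathrm{out}}, I)$ each of $(A,\mu)$-entropy at most $M$.

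For $n$ so large that $R_n^{1/2} - r_n > 2|I|$, the supports of the energy densities of $\phi_n^{\mathrm{in}}$ and $\phi_n^{\mathrm{out}}$ remain essentially disjoint on $I$ by finite speed of propagation, up to Schwartz tails controlled by the frequency localisation. Their superposition, formed in the $\psi_s$ picture and then inverted, yields an approximate classical wave map on $I$ whose initial data converges to $\phi^{(n)}[0]$ in $\Energy$ as $n \to \infty$. Applying the perturbation, continuity, and divisibility properties of Theorem \ref{symscat} produces the desired extension of $\phi^{(n)}[0]$ to $(\phi^{(n)}, I)$ with $(O(A),1)$-entropy at most $O(M)$, uniformly in $n$. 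The main obstacle is quantitative: one must verify that the cutoff-and-heat-flow-invert procedure does not inflate the controlling norm $\|\cdot\|_{S^1_*}$ beyond the perturbation threshold, and that the superposition error in the wave map equation is small enough in the appropriate $\psi_s$-based norms to be absorbed by the large-data perturbation theory. The frequency localisation \eqref{spacloc} is precisely what makes this work, since it forces the data to be smooth on scales much smaller than the separation $r_n$, rendering the cutoffs essentially invisible to the ESD.
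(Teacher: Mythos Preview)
The paper does not actually prove Theorem \ref{spacdeloc} in this installment; the proof is deferred to \cite{tao:heatwave5}, and here only an informal sketch of the strategy is given. Your proposal matches that sketch in its broad outline: pigeonhole to find a low-energy annulus, decompose into inner and outer pieces via the $\psi_s$ resolution, evolve each by the inductive goodness hypothesis, and reconstitute by perturbation theory.

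There is, however, a genuine gap in your proposal, and it is precisely the point the paper's sketch singles out as the nontrivial one. You write ``For $n$ so large that $R_n^{1/2} - r_n > 2|I|$, the supports \ldots\ remain essentially disjoint on $I$ by finite speed of propagation.'' But the quantifier order in the theorem is $\exists A,M\ \forall$ sufficiently large $n\ \forall$ compact $I \ni 0$: for each fixed (sufficiently large) $n$ the entropy bound must hold for \emph{arbitrarily large} $|I|$, well beyond the separation scale $R_n^{1/2}-r_n$. Finite speed of propagation alone cannot cover this regime, and your argument as written proves only the weaker statement in which the largeness threshold on $n$ is allowed to depend on $|I|$. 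That weaker statement is not what is used in Section \ref{overview-sec}, where one fixes $n$ and then takes $I$ to be the entire (possibly very long) interval $I_n$.

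The paper's sketch addresses exactly this issue: for small and medium times one uses (exact) finite speed of propagation as you do, but for late times one argues separately that the \emph{local} (inner) component must disperse after a bounded amount of time, using ``the linear theory, finite speed of propagation, and the bounded entropy of the local solution.'' Once the inner component has dispersed, its interaction with the outer component is again perturbative regardless of how large $|I|$ is. This late-time dispersion step---the analogue of the pseudoconformal argument in \cite{gopher}---is missing from your proposal and is the heart of the matter.
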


This theorem is analogous to \cite[Proposition 4.7]{gopher}.  That proposition was proven by a strategy similar to that used to prove the corresponding result in frequency space (\cite[Proposition 4.3]{gopher}).  Namely, one partitions the initial data into a ``local'' component near the spatial origin, and a ``global'' component far away from the spatial origin, plus an error term corresponding to the intermediate region which one can take to be negligible using the pigeonhole principle.  The local and global components have strictly smaller energy than $E_0$, and can thus be evolved separately to $I$ by the hypotheses on $E_0$.  For small and medium times, one can use (approximate) finite speed of propagation to keep the local and global components separated from each other in space.  For large times, finite speed of propagation is no longer useful, but the pseudoconformal conservation law for NLS was used in \cite{gopher} to show that the local component became dispersed at large times, and thus had a negligible interaction with the global component.  Superimposing the two components and using perturbation theory gives the claim.

In \cite{tao:heatwave5} we adapt these arguments to wave maps.  Once again we need to scalarise the flow using the resolution $\phi \mapsto \psi_s$.  One can use exact finite speed of propagation for wave maps as a substitute for approximate finite speed of propagation (this being one of the rare places where the wave maps arguments are in fact slightly simpler than their NLS counterparts).  For the late times, the pseudoconformal identity argument is not available, but one can insead use the linear theory, finite speed of propagation, and the bounded entropy of the local solution to show that that solution must disperse after a bounded amount of time.

\subsection{Main result}

We can now state the main result of this paper:

\begin{theorem}[Reduction of main conjecture]\label{mainthm}  Suppose there is a notion of an $(A,\mu)$-wave map for which 
Theorems \ref{symscat}, \ref{freqbound}, \ref{spacbound}, \ref{spacdeloc} hold.  Then Conjecture \ref{minimal} (and hence Conjecture \ref{conj2} and Conjecture \ref{conj}) holds.
\end{theorem}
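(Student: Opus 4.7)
The plan follows the Kenig--Merle induction-on-energy strategy, with Theorems \ref{freqbound}, \ref{spacbound}, \ref{spacdeloc} serving as nonlinear substitutes for the linear profile decomposition, and with Lemma \ref{quant} below reducing matters to showing that every energy is good. Concretely, assume the hypothesis of Conjecture \ref{minimal} (Conjecture \ref{conj2} fails); by the contrapositive of Lemma \ref{quant}, some energy is bad, so $E_{\mathrm{crit}} := \inf\{E > 0 : E \text{ is bad}\}$ is finite. By Theorem \ref{symscat}(v) together with the monotonicity of Theorem \ref{symscat}(i), all sufficiently small energies are good, so $E_{\mathrm{crit}} > 0$; moreover, every $E < E_{\mathrm{crit}}$ is good, which is exactly the hypothesis under which Theorems \ref{freqbound}, \ref{spacbound}, \ref{spacdeloc} can be applied at $E_0 := E_{\mathrm{crit}}$.

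Next, fix a sufficiently large $A$ and take a minimising sequence of classical wave maps $(\phi^{(n)}, I^{(n)})$ on compact intervals with $\E(\phi^{(n)}) \leq E_{\mathrm{crit}} + o_{n \to \infty}(1)$ and $(A,1)$-entropy exceeding $n$. Using the symmetry invariance from Theorem \ref{symscat}(ii), I would normalise as follows: time-translate so that $0 \in I^{(n)}$ splits the entropy evenly between the past and future halves of $I^{(n)}$ (possible by subadditivity of entropy under concatenation); rescale by $\Dil_{\lambda_n}$ so that the median scale of $\ESD(\phi^{(n)}[0])$ is $1$; and translate by $\Trans_{x_n}$ so that a ball of maximal spatial-energy concentration of $\phi^{(n)}[0]$ sits at the origin. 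The contrapositives of Theorems \ref{freqbound}, \ref{spacbound}, \ref{spacdeloc} now force the normalised sequence to be uniformly frequency-localised, not asymptotically spatially dispersed, and uniformly spatially localised respectively; any failure on one of these three fronts would, via the unbounded entropy on at least one of the halves of $I^{(n)}$, contradict the corresponding theorem's conclusion. Combined with the harmonic-map-heat-flow description of $\Energy$ from \cite{tao:heatwave2} (reviewed in Section \ref{engen}), this tightness yields a subsequential strong limit $\Phi_\infty \in \Energy$ with $\E(\Phi_\infty) = E_{\mathrm{crit}}$; the preserved concentration bump forces $\E(\Phi_\infty) > 0$, so $\Phi_\infty$ is non-constant by Theorem \ref{energy-claim}(v).

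Let $\phi_\infty : I_\infty \to \Energy$ be the maximal Cauchy development of $\Phi_\infty$ from Theorem \ref{lwp-claim}. By Theorem \ref{lwp-claim}(iv) combined with Corollary \ref{finsiz}, the $(A,1)$-entropies of $\phi^{(n)}$ on any compact $K \subset I_\infty$ are uniformly bounded for large $n$, while the total entropies on $I^{(n)}$ diverge, so the even split chosen above forces the entropy of $\phi_\infty$ to diverge at both endpoints of $I_\infty$. To verify almost periodicity (Definition \ref{ap-def}), for each $t \in I_\infty$ I would define $N(t) > 0$ as the median scale of $\ESD(\phi_\infty[t])$ and $x(t) \in \R^2$ as the centre of a maximal spatial-energy ball of $\phi_\infty[t]$; I claim $\{\Dil_{N(t)}\Trans_{-x(t)}\phi_\infty[t] : t \in I_\infty\}$ is precompact in $\Energy$. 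Otherwise, pick a sequence $t_k \in I_\infty$ witnessing the failure, time-translate $\phi_\infty$ so that $t_k$ sits at the origin, and rerun the concentration-compactness argument on the shifted family: since entropy diverges at both endpoints of $I_\infty$, these shifts retain unbounded entropy on both sides of $0$, and Theorems \ref{freqbound}, \ref{spacbound}, \ref{spacdeloc} again force a convergent subsequence of the normalised data, a contradiction. Thus $\phi_\infty$ is an almost periodic maximal Cauchy development with positive energy, establishing Conjecture \ref{minimal}.

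The main obstacle is the step that upgrades the three qualitative exclusions (not frequency-delocalised, not spatially dispersed, not spatially delocalised) together with the normalisations to actual strong precompactness of $\{\phi^{(n)}[0]\}$ in $\Energy$: this requires a tightness theorem in the nonlinear metric space $\Energy$ showing that uniform control of $\ESD$ (as in \eqref{spacloc}) and of $\T_{00}$ translates into precompactness modulo the symmetries \eqref{space-trans-data}--\eqref{scaling-data}, which will rely on the harmonic-map-heat-flow construction of $\Energy$ from \cite{tao:heatwave2}. A secondary technical point is the iteration for almost periodicity: one must verify that for each selected $t_k$ the time-shift $\phi_\infty(\cdot + t_k)$ retains unbounded entropy on both sides of $0$, which I would handle by combining the endpoint divergence of entropy for $\phi_\infty$ with the divisibility property in Theorem \ref{symscat}(iii) to propagate the entropy blow-up across any fixed-length window adjacent to the endpoint.
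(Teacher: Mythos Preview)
Your proposal is essentially correct and follows the same Kenig--Merle induction-on-energy skeleton as the paper: define $E_{\crit}$, extract a minimising sequence of classical wave maps whose entropy blows up, use the contrapositives of Theorems~\ref{freqbound}, \ref{spacbound}, \ref{spacdeloc} to force frequency and spatial tightness of the data, pass to a limit in $\Energy$, and show the resulting maximal development is almost periodic. You also correctly identify the compactness step as the real work; this is precisely Proposition~\ref{precom} of the paper, proved in Section~\ref{compact-sec} using the heat-flow description of $\Energy$.

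The one genuine organisational difference is in how almost periodicity is obtained. You split $I^{(n)}$ into \emph{two} halves at $t=0$, establish tightness only at $t=0$, and then for a hypothetical bad sequence $t_k\in I_\infty$ you time-shift and rerun the whole concentration-compactness argument on a diagonal subsequence $\phi^{(n_k)}(\cdot+t_k)$. The paper instead splits $I_n$ into \emph{four} consecutive intervals $I_n^{--}\cup I_n^-\cup I_n^+\cup I_n^{++}$, each with entropy $\to\infty$, and proves the frequency and spatial localisation (Propositions~\ref{freqloc}, \ref{spatloc-0}, \ref{spatloc}) \emph{uniformly for every $t$ in the middle half} $I_n^-\cup I_n^+$: the outer quarters $I_n^{--}$, $I_n^{++}$ guarantee that from any such $t$ both the past and future carry diverging entropy, which is what the one-sided conclusion of Theorem~\ref{spacbound} needs. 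Then, since Corollary~\ref{finsiz} forces any compact $J\subset I_\infty$ to eventually lie inside $I_n^-\cup I_n^+$, the paper reads off $s(t),x(t)$ for each $t\in I_\infty$ directly from the already-established uniform localisation of the approximants, and a single application of Proposition~\ref{precom} gives the fixed compact $K$. Your rerun-by-contradiction route works too, but it requires a second diagonalisation and a check that the normalisation parameters $(\tilde N_k,\tilde x_k)$ produced by the rerun are comparable to your $(N(t_k),x(t_k))$; the paper's four-interval trick avoids this by never leaving the original sequence.

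Two minor imprecisions worth cleaning up: (a) rather than fixing a single ``sufficiently large $A$'', the paper takes $A_n\to\infty$ along the minimising sequence (since badness of $E_\crit+1/n$ quantifies over all $A$), and then uses monotonicity (Theorem~\ref{symscat}(i)) when comparing with the $O_E(1)$ coming from Corollary~\ref{finsiz}; (b) your phrase ``entropy of $\phi_\infty$ diverges at both endpoints'' should be understood as a statement about the approximants $\phi^{(n)}$ restricted to the complements of compact subintervals of $I_\infty$, which indeed follows from Corollary~\ref{finsiz} and subadditivity of entropy as you indicate.
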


The proof of this theorem beigns in Section \ref{overview-sec}.

Thus in order to conclude the proof of the global regularity conjecture for wave maps into hyperbolic space, one must construct a notion of an $(A,\mu)$-wave map for which Theorems \ref{symscat}, \ref{freqbound}, \ref{spacbound}, \ref{spacdeloc} are true.  This is the purpose of the sequel \cite{tao:heatwave5} to this paper.

\subsection{Acknowledgements}

This project was started in 2001, while the author was a Clay Prize Fellow.  The author thanks Andrew Hassell and the Australian National University for their hospitality when a substantial portion of this work was initially conducted, and to Ben Andrews and Andrew Hassell for a crash course in Riemannian geometry and manifold embedding, and in particular to Ben Andrews for explaining the harmonic map heat flow.  The author also thanks Mark Keel for background material on wave maps, Daniel Tataru for sharing some valuable insights on multilinear estimates and function spaces, and to Igor Rodnianski and Jacob Sterbenz for valuable discussions.  The author is supported by NSF grant DMS-0649473 and a grant from the Macarthur Foundation.

\section{Notation}\label{notation-sec}

The dimension $m$ of the target hyperbolic space $\H^m$ is fixed throughout the paper, and all implied constants can depend on $m$.

We use $X = O(Y)$ or $X \lesssim Y$ to denote the estimate $|X| \leq CY$ for some absolute constant $C > 0$, that can depend on the $\delta_i$ and the dimension $m$ of the target hyperbolic space.  If we wish to permit $C$ to depend on some further parameters, we shall denote this by subscripts, e.g. $X = O_k(Y)$ or $X \lesssim_k Y$ denotes the estimate $|X| \leq C_k Y$ where $C_k > 0$ depends on $k$.  

Note that parameters can be other mathematical objects than numbers.  For instance, the statement that a function $u: \R^2 \to \R$ is Schwartz is equivalent to the assertion that one has a bound of the form $|\partial_x^k u(x)| \lesssim_{j,k,u} \langle x \rangle^{-j}$ for all $j,k \geq 0$ and $x \in \R^2$, where $\langle x \rangle := (1+|x|^2)^{1/2}$.

\section{Overview of argument}\label{overview-sec}

In this section we set out the ``high-level'' proof of Theorem \ref{mainthm}, breaking it up into a number of largely independent (and simpler) components which we will then prove separately.  

Recall that an energy $E > 0$ is \emph{good} if there exists $A,M > 0$, $0 < \mu \leq 1$ such that every classical wave map $(\phi,I)$ with $I$ compact and $\E(\phi) \leq E$ has an $(A,\mu)$-entropy of at most $M$.  Call $E$ \emph{bad} if it is not good. 

Clearly if $E$ is good, then $E'$ is good for every $0 < E' < E$.  From Theorem \ref{symscat}(v) we see that all sufficiently small energies $E$ are good.

\begin{lemma}[Scattering bound implies global well-posedness]\label{quant} Suppose $E$ is good.  Then Conjecture \ref{conj2} holds for all maximal Cauchy developments with energy less than $E$.  
\end{lemma}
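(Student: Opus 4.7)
The plan is to argue by contradiction, using the $(A,\mu)$-entropy bound from goodness of $E$ together with the continuity property in Theorem \ref{symscat}(iv) to continuously extend a hypothetical blowup solution past its supposed endpoint, contradicting the maximality criterion Theorem \ref{lwp-claim}(v). Suppose some maximal Cauchy development $\phi:I\to\Energy$ with $\E(\phi)<E$ has a finite right endpoint $t_*:=\sup I$ (the left-endpoint case being symmetric). Fix $t_0\in I$; by Theorem \ref{energy-claim}(i),(iv) we pick classical data $\Phi_n\in\S$ with $\iota(\Phi_n)\to\phi[t_0]$ in $\Energy$ and $\E(\Phi_n)<E$ for $n$ large, and by Theorem \ref{lwp-claim}(iii) the corresponding maximal Cauchy developments have the form $\iota(\tilde\phi^{(n)})$ for classical wave maps $(\tilde\phi^{(n)},I_n)$ of constant energy $\E(\tilde\phi^{(n)})<E$. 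Theorem \ref{lwp-claim}(iv) then ensures that for every compact $K\subset I$ we have $K\subset I_n$ and $\iota(\tilde\phi^{(n)})\to\phi$ uniformly in $\Energy$ on $K$ for all $n$ sufficiently large.

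Because $E$ is good, some $A,M>0$ and $0<\mu\le 1$ make each $(\tilde\phi^{(n)},K)$ of $(A,\mu)$-entropy at most $M$, for any compact $K\subset I_n$. Invoking divisibility (Theorem \ref{symscat}(iii)) at the cost of replacing $(A,M,\mu)$ by new parameters $(A',M',\mu')$, we arrange $\mu'$ small enough depending on $A'$ for the continuity property Theorem \ref{symscat}(iv) to apply at energy scale $A'$. Apply this with $K_n:=[t_0,t_*-1/n]$ (which lies in $I_n$ for $n$ large): each $K_n$ partitions into at most $M'$ closed subintervals $[t_{n,k-1},t_{n,k}]$, on each of which $\tilde\phi^{(n)}$ is an $(A',\mu')$-wave map. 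After passing to a subsequence, we may assume the number of pieces is a constant $m$, and each endpoint $t_{n,k}$ converges to some $\tau_k\in[t_0,t_*]$, with $\tau_0=t_0$ and $\tau_m=t_*$.

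Let $\sigma_0<\sigma_1<\dots<\sigma_q=t_*$ enumerate the distinct values of the $\tau_k$, so that for each $j$ there is an index $k(j)$ with $\tau_{k(j)-1}=\sigma_{j-1}$ and $\tau_{k(j)}=\sigma_j$. On any compact subinterval $K'$ of an open interval $(\sigma_{j-1},\sigma_j)$, all sufficiently large $n$ place $K'$ inside the single partition piece $[t_{n,k(j)-1},t_{n,k(j)}]$ of $\tilde\phi^{(n)}$, so monotonicity (Theorem \ref{symscat}(i)) makes $(\tilde\phi^{(n)},K')$ an $(A',\mu')$-wave map. Applying Theorem \ref{symscat}(iv) to an exhausting sequence of such $K'$ (with the uniform convergence $\iota(\tilde\phi^{(n)})\to\phi$) shows that $\phi$ extends continuously in $\Energy$ to the closed interval $[\sigma_{j-1},\sigma_j]$. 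Taking $j=q$ yields a continuous extension of $\phi$ up to $t_*$, so $\phi(t)$ converges in $\Energy$ as $t\to t_*^-$, contradicting Theorem \ref{lwp-claim}(v).

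The main technical obstacle is the bookkeeping of partitions: one must diagonalise to get simultaneous convergence of the (bounded) number of pieces and of all partition endpoints, and then correctly isolate the ``surviving'' non-degenerate limit piece adjacent to $t_*$ so that Theorem \ref{symscat}(iv) may be applied to it. A subsidiary but essential issue is arranging $\mu$ small enough for Theorem \ref{symscat}(iii),(iv) to apply, which is exactly the purpose of the divisibility property.
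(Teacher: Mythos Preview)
Your proof is correct and follows essentially the same approach as the paper's: both use the uniform entropy bound from goodness of $E$ together with a pigeonhole argument on the partition endpoints to locate a single $(A',\mu')$-piece adjacent to the finite endpoint $t_*$, and then invoke Theorem \ref{symscat}(iv) to extend $\phi$ continuously to $t_*$, contradicting Theorem \ref{lwp-claim}(v). The only differences are cosmetic: the paper first treats the classical case and then the energy-class case separately, whereas you go directly to the energy-class case via classical approximants; and the paper leaves the pigeonhole step (``otherwise one could show inductively that all the $K_i$ would have their endpoints converging to $t_+$'') as a one-line remark, while you make the compactness of partition endpoints and the identification of the non-degenerate limiting piece $[\sigma_{q-1},t_*]$ fully explicit.
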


\begin{proof} 
We first claim that any classical maximal Cauchy development  $\phi: I \to \Energy$ of energy less than $E$ is necessarily global.  Suppose this is not the case; by time reversal symmetry we may assume that $t_+ := \sup(I)$ is finite.  

Let $K$ be any compact subinterval of $I$. As $E$ is good, one can cover $K$ by $O_{E,\mu}(1)$ intervals $K_1,\ldots,K_m$ for some $0 < \mu \leq 1$, such that $(\phi\downharpoonright_{K_j},K_j)$ is $(O_E(1),\mu)$-wave map for each $1 \leq j \leq m$.  By Theorem \ref{symscat}(iii), we may assume that $\mu$ is small depending on $E$. Letting $K$ increase to $I$, we conclude the existence of some $t_* < t_+$ such that the restriction of $\phi$ to $[t_*,t_+-\eps]$ is a $(O_E(1),\mu)$-wave map for all $\eps > 0$ (otherwise one could show inductively that all the $K_i$ would have their endpoints converging to $t_+$ as $K$ approached $I$, a contradiction).  By Theorem \ref{symscat}(iv), $\phi$ can now be continuously extended in $\Energy$ to $t_+$, contradicting Theorem \ref{lwp-claim}(v).

Now we establish that any \emph{energy class} maximal Cauchy development  $\phi: I \to \Energy$ of energy less than $E$ is necessarily global as well.
Again, we suppose for contradiction that $t_+ := \sup(I)$ is finite.

Let $t_0$ be a point in the interior of $I$.  By Theorem \ref{energy-claim}, we can find a sequence of classical data $(\phi^{(n)}_0, \phi^{(n)}_1) \in \S$ such that $\iota(\phi^{(n)}_0, \phi^{(n)}_1)$ converges in $\Energy$ to $\phi[t_0]$, and has energy less than $E$.  By the preceding arguments, we know that each such classical data extends to a global classical wave map $(\phi^{(n)},\R)$.  From Theorem \ref{lwp-claim}(iv), $\iota(\phi^{(n)})$ converges locally uniformly on compact subintervals of $I$ to $\phi$. 

Arguing as before, we may find $t_* < t_+$ and a sequence $\eps_n \to 0$ such that the restriction of $\phi^{(n)}$ to $[t_*,t_+-\eps_n]$ is a $(O_E(1),\mu)$-wave map for infinitely many $n$.  Applying Theorem \ref{symscat}(iv) again, we can extend $\phi$ continously to $t_+$, again contradicting Theorem \ref{lwp-claim}(v).
\end{proof}

Now suppose that Conjecture \ref{conj2} fails.  Then by Lemma \ref{quant}, $E$ must be bad for at least one $E > 0$.  We conclude that there exists a unique \emph{minimal blowup energy} $0 < E_\crit < +\infty$ such that $E$ is bad for all $E > E_\crit$, and $E$ is good for all $E < E_\crit$.

By the definition of $E_\crit$ (and Theorem \ref{symscat}(iii)), we may now find a sequence $(\phi^{(n)}, I_n)$ of classical wave maps on compact time intervals $I_n$ such that 
\begin{equation}\label{ecr}
\E(\phi^{(n)}) \to E_\crit,
\end{equation}
and such the $(A_n,1)$-entropy (say) of $(\phi^{(n)},I_n)$ goes to infinity as $n \to \infty$, where $A_n \to \infty$.  The strategy will be to try to extract some sort of limit from some subsequence of (suitable renormalisations of) the $\phi^{(n)}$ to obtain the desired almost periodic maximal Cauchy development $\phi: I \to \Energy$ with non-zero energy (indeed, we will show that it has energy $E_\crit$).  In order to do this, we need to to ensure that the $\phi^{(n)}$ are compact (modulo symmetries) in a sufficiently strong sense.

Because the energy and notion of $(A,\mu)$-wave maps are both invariant under the four symmetries \eqref{space-trans}, \eqref{time-trans}, \eqref{time-reverse}, \eqref{cov-scaling}, we have the freedom to apply these transformations to each of the $\phi^{(n)}$ separately as we please.  We will frequently take advantage of this freedom in the sequel to achieve various normalisations.

The first stage in the compactification procedure is to ensure that the energy distribution of $\phi^{(n)}$ is localised in frequency.  To do this, we will introduce the notion of the \emph{energy spectral density} $\ESD(\phi_0,\phi_1): \R^+ \to \R^+$ of a given classical pair of data $(\phi_0,\phi_1) \in \S$.  The precise definition of this density, which involves the harmonic map heat flow, will be given in Section \ref{esd-sec}.  In that section we will also we establish the following basic properties of this energy spectral density function:

\begin{proposition}[Basic properties of ESD]\label{esd-prop}  Let $(\phi_0,\phi_1) \in \S$, and let $\ESD(\phi_0,\phi_1)$ be the quantity defined in Section \ref{esd-def}.
\begin{itemize}
\item[(i)] (Energy identity) $\ESD(\phi_0,\phi_1)(s)$ is continuous in $s$, and we have
\begin{equation}\label{energy-ident-0}
\E(\phi_0,\phi_1) = \int_0^\infty \ESD(\phi_0,\phi_1)(s)\ ds.
\end{equation}
\item[(ii)] (Symmetries) We have
\begin{align}
\ESD(\Trans_{x_0}(\phi_0,\phi_1))(s) &= \ESD(\phi_0,\phi_1)(s) \label{spacetrans-esd} \\
\ESD(\Rot_U(\phi_0,\phi_1))(s) &= \ESD(\phi_0,\phi_1)(s) \label{rot-esd} \\
\ESD(\Rev(\phi_0,\phi_1))(s) &= \ESD(\phi_0,\phi_1)(s) \label{timerev-esd} \\
\ESD(\Dil_{\lambda}(\phi_0,\phi_1))(s) &= \lambda^{-2} \ESD(\phi_0,\phi_1)(s/\lambda^2) \label{dil-esd} 
\end{align}
for $x_0 \in \R^2$, $U \in SO(m,1)$ and $\lambda > 0$.
\item[(iii)] (Convergence) If $(\phi^{(n)}_0,\phi^{(n)}_1) \in \S$ is such that $\iota(\phi^{(n)}_0,\phi^{(n)}_1)$ is a Cauchy sequence in $\Energy$, then on any compact interval $S \subset (0,+\infty)$, $\ESD( \phi^{(n)}_0, \phi^{(n)}_1 )$ is a Cauchy sequence in the uniform topology.  In particular, $\ESD(\Phi)$ can be meaningfully defined for $\Phi \in \Energy$.
\end{itemize}
\end{proposition}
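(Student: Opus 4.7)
The $\ESD$ function, as described in the introduction, measures the rate of energy dissipation of the harmonic map heat flow extension of the classical data $(\phi_0,\phi_1)$.  My approach is to unpack the precise definition given in Section~\ref{esd-def} and derive each property from the corresponding property of that heat flow.

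For part (i), continuity of $\ESD(\phi_0,\phi_1)(s)$ in $s$ reduces to smoothness of the harmonic map heat flow in its time variable, which is standard for Schwartz initial data.  The energy identity \eqref{energy-ident-0} is the fundamental energy dissipation formula for harmonic map heat flow: if the flow dissipates energy at rate $\ESD(\phi_0,\phi_1)(s)$ at heat-time $s$, then $\int_0^\infty \ESD(\phi_0,\phi_1)(s)\,ds$ is the total dissipation, which equals $\E(\phi_0,\phi_1)$ minus the energy of the limiting map at $s=\infty$.  For Schwartz data into a negatively curved target one expects the flow to converge to a constant map of zero energy (either by Eells--Sampson type arguments, or more concretely by using that the evolution of $\psi_s$ on $\R^2$ behaves roughly like a scalar heat equation with decay).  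Combining these yields \eqref{energy-ident-0}.

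For part (ii), each symmetry property follows by checking equivariance of the harmonic map heat flow under that symmetry.  Spatial translations and target isometries $U \in SO(m,1)$ commute with the flow because the flow is defined intrinsically from the target metric and the spatial Laplacian, giving \eqref{spacetrans-esd} and \eqref{rot-esd}.  Time reversal only acts on $\phi_1$ by a sign, while the energy density $\T_{00}$ is quadratic in $\phi_1$; parallel-transporting $-\phi_1$ along the flow gives the negative of the transport of $\phi_1$, so $|\phi_1|^2$ is preserved, giving \eqref{timerev-esd}.  For \eqref{dil-esd}, the parabolic scaling of the heat flow means that dilating the spatial variable by $\lambda$ reparametrises the heat-time by $\lambda^2$, while the energy density rescales by $\lambda^{-2}$, producing both the $\lambda^{-2}$ prefactor and the $s/\lambda^2$ argument.

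For part (iii), the assertion is that the $\ESD$ extends continuously to $\Energy$.  The plan is to exploit parabolic smoothing: for heat-time $s$ in a compact subset $S \subset (0,\infty)$, the flow maps $\Energy$ data into uniformly smooth maps with uniform bounds on derivatives, depending only on $E$ and $S$.  The dissipation rate at time $s$ is then a continuous functional of this smoothed data, so if $\iota(\phi_0^{(n)},\phi_1^{(n)})$ is Cauchy in $\Energy$, the resulting $\ESD$'s form a Cauchy sequence in $C(S)$.  This then defines $\ESD(\Phi)$ for $\Phi \in \Energy$ by continuous extension from the dense subset $\iota(\S)$.

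The main obstacle is expected to be part (iii): one needs uniform parabolic smoothing for harmonic map heat flow at the \emph{energy-critical} level into hyperbolic space, not merely locally near large data but globally as a continuous semigroup on $\Energy$.  This should invoke the heat-flow framework already developed in \cite{tao:heatwave2} (where the construction of $\Energy$ itself was carried out via heat flow), rather than be re-proven here; the remaining work is to verify that $\ESD$, viewed as a functional of the heat-flow trajectory, depends continuously on the data in the topology provided by that framework.
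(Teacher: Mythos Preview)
Your plan is correct and matches the paper's approach: (i) follows from the energy dissipation identity for the heat flow together with $\|\psi_t(s)\|_{L^2}\to 0$, (ii) from equivariance of the heat flow under the symmetries, and (iii) by citing the continuity of $\psi_s,\psi_x,A_x$ in the caloric gauge from \cite{tao:heatwave2} and then proving a stability estimate for $\psi_t$ along the covariant heat equation. The only refinement worth noting is that for (iii) ``parabolic smoothing gives uniform bounds'' is not by itself enough---the paper explicitly subtracts the covariant heat equations for $\psi_t^{(n)}$ and $\psi_t^{(n')}$ and estimates the resulting error terms using the already-established Cauchy properties of $\psi_x,A_x$, so the continuity of $\ESD$ is obtained via a quantitative stability estimate rather than a soft compactness argument.
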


From \eqref{energy-ident-0} we see in particular that
\begin{equation}\label{esdloc}
\int_0^\infty \ESD(\phi^{(n)}[t])(s)\ ds = E_\crit + o_{n \to \infty}(1)
\end{equation}
for all $n$ and all $t \in I_n$.

\subsection{Frequency localisation}

We now establish

\begin{proposition}[Frequency localisation]\label{freqloc}  For every $n$ and every $t \in I_n$ there exists $s_n(t) \in \R^+$, such that for every $\eps > 0$ there exists a quantity $C(\eps) > 0$, such that
$$ \int_{s < s_n(t)/C(\eps)} \ESD(\phi^{(n)}[t])(s)\ ds + \int_{s > C(\eps) s_n(t)} \ESD(\phi^{(n)}[t])(s)\ ds \leq \eps$$
for all sufficiently large $n$ and all $t \in I_n$.
\end{proposition}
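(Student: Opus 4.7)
I argue by contradiction, combining concentration-compactness in the logarithmic frequency variable with Theorem~\ref{freqbound}. Suppose the conclusion of Proposition~\ref{freqloc} fails; a diagonal extraction then produces $\eps_0 > 0$, a subsequence $n_k \to \infty$, times $t_k \in I_{n_k}$, and $C_k \to \infty$ such that
$$
\int_0^{s/C_k}\ESD(\phi^{(n_k)}[t_k])(s')\,ds' + \int_{C_k s}^\infty \ESD(\phi^{(n_k)}[t_k])(s')\,ds' > \eps_0 \quad \text{for every }s > 0.
$$
Using the time-translation symmetry \eqref{time-trans}, reduce to $t_k = 0$. In the logarithmic coordinate $\tau = \log s$, let $\mu_k$ be the pushforward of $\ESD(\phi^{(n_k)}[0])(s)\,ds$ under $s \mapsto \log s$; by \eqref{energy-ident-0} and \eqref{ecr} the $\mu_k$ are positive Borel measures on $\R$ with $\mu_k(\R) \to E_\crit$, and the displayed inequality says that no window of radius $\log C_k$ captures more than $E_\crit - \eps_0 + o(1)$ of $\mu_k$, so $\{\mu_k\}$ is not tight modulo translation.

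Apply the one-dimensional Lions concentration-compactness principle to $\mu_k$. Since tightness modulo translation is excluded, a further subsequence satisfies either \emph{vanishing} ($\sup_{\tau}\mu_k([\tau - R, \tau + R]) \to 0$ for every $R$) or \emph{dichotomy} (there exist $\alpha \in (0, E_\crit)$, centres $\tau_k$, and radii $R_{0,k} < R_{1,k}$ with $R_{1,k} - R_{0,k} \to \infty$, such that $\mu_k([\tau_k - R_{0,k}, \tau_k + R_{0,k}]) \to \alpha$, $\mu_k(\R \setminus [\tau_k - R_{1,k}, \tau_k + R_{1,k}]) \to E_\crit - \alpha$, and the intervening annulus carries $o(1)$ mass). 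In the vanishing case, take $\tau_k^*$ to be a median of $\mu_k$ and $R_k \to \infty$ slowly enough that $\mu_k([\tau_k^* - R_k, \tau_k^* + R_k]) \to 0$; each tail then carries mass $\to E_\crit/2$. In the dichotomy case, set $\alpha_- := \lim_k \mu_k((-\infty, \tau_k - R_{1,k}])$ and $\alpha_+ := \lim_k \mu_k([\tau_k + R_{1,k}, \infty))$, so $\alpha_- + \alpha_+ = E_\crit - \alpha$ and at least one is positive; supposing $\alpha_+ > 0$ (the other case is symmetric), choose $\tau_k^* := \tau_k + (R_{0,k}+R_{1,k})/2$ and half-gap $\Delta_k := (R_{1,k}-R_{0,k})/2 \to \infty$, so that $\mu_k((-\infty, \tau_k^* - \Delta_k]) \geq \alpha + o(1)$, $\mu_k([\tau_k^* + \Delta_k, \infty)) = \alpha_+ + o(1)$, and the mass in $(\tau_k^* - \Delta_k, \tau_k^* + \Delta_k)$ is $o(1)$.

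In either case, apply the scaling symmetry $\Dil_{\lambda_k}$ with $\lambda_k^2 = e^{\tau_k^*}$, so that (by \eqref{dil-esd}) $\tau_k^*$ is translated to $0$; the rescaled wave maps $\tilde\phi^{(n_k)}$ on the rescaled intervals $\tilde I_{n_k}$ continue to satisfy \eqref{enbound} and now verify the three hypotheses of Theorem~\ref{freqbound} with $K_k := e^{\Delta_k} \to \infty$ (or $e^{R_k}$ in the vanishing case) and some uniform $\eps > 0$. Since every energy strictly below $E_\crit$ is good by the definition of $E_\crit$, Theorem~\ref{freqbound} applies with $E_0 = E_\crit$ and yields absolute constants $A, M > 0$ such that, for all large $k$ and every compact interval $I \ni 0$, $\tilde\phi^{(n_k)}[0]$ extends to a classical wave map on $I$ of $(A,1)$-entropy at most $M$. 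Taking $I \supset \tilde I_{n_k}$, uniqueness (Theorem~\ref{lwp-claim}(iii)) identifies this extension with $\tilde\phi^{(n_k)}$ on $\tilde I_{n_k}$; restriction monotonicity and symmetry invariance (Theorem~\ref{symscat}(i),(ii)) transfer the same entropy bound back to $(\phi^{(n_k)}, I_{n_k})$, and monotonicity in the first argument then gives an $(A_{n_k},1)$-entropy bound of $M$ once $A_{n_k} \geq A$, contradicting the construction of $(\phi^{(n)}, I_n)$ as having $(A_n,1)$-entropy tending to infinity.

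The principal obstacle is producing the small-middle bound $\int_{1/K_n < s < K_n}\ESD = o_{n \to \infty}(1)$ required by Theorem~\ref{freqbound}, rather than merely a two-sided mass split; a naive pigeonhole on the ESD gives at best a gap of mass $\leq E_\crit - \eps_0$, which is insufficient. This forces us to invoke the full Lions dichotomy, and in the trichotomous subcase of the dichotomy alternative---where the ``far'' mass is distributed on both sides of the central bump---one must aggregate the central bump with whichever far side is asymptotically smaller (or either side, if one is asymptotically empty) and apply the theorem across the remaining widening annular gap, which is the only region guaranteed to have $o(1)$ mass.
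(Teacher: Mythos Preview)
Your argument is correct, but it takes a heavier route than the paper's. Both proofs negate the conclusion, extract a subsequence with $C_n\to\infty$ for which no window of multiplicative width $C_n$ captures all but $\eps_0$ of the ESD mass, and then manufacture the three hypotheses of Theorem~\ref{freqbound} (two nontrivial tails and an $o(1)$ middle) before deriving the entropy contradiction. The difference lies entirely in how the $o(1)$ middle is produced. You invoke the Lions concentration--compactness trichotomy in the logarithmic variable and then carefully recombine bump and far mass in the dichotomy subcase. The paper instead does something much more direct: first use the intermediate value theorem to choose $s_n$ with exactly $\eps/2$ of the mass below $s_n/C_n$ (hence at least $\eps/2$ above $C_n s_n$), and then pigeonhole the interval $[s_n/\sqrt{C_n},\sqrt{C_n}\,s_n]$ into roughly $\log C_n/\log K_n$ disjoint multiplicative annuli of width $K_n^2$; since the total ESD mass is $E_\crit+o(1)$, one annulus carries mass $O(\log K_n/\log C_n)=o(1)$, e.g.\ with $K_n=\log\log C_n$. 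Recentring at that annulus gives the $o(1)$ middle directly, with both tails still $\geq\eps/4$.

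Your final paragraph asserts that ``a naive pigeonhole on the ESD gives at best a gap of mass $\leq E_\crit-\eps_0$, which is insufficient,'' and that this \emph{forces} the Lions machinery. That claim is the one genuinely mistaken point in the write-up: the pigeonhole above is over $\to\infty$ many disjoint logarithmic annuli, not a single one, and it immediately yields an $o(1)$ gap. So the concentration--compactness apparatus buys nothing here beyond what IVT plus pigeonhole already give, at the cost of the extra case analysis you carry out. (A minor additional point: your opening reduction implicitly uses that the $\eps$-dependent version of the proposition---the paper's Proposition~\ref{freqloc2}---already implies the $\eps$-independent one; this step is easy but not automatic and is spelled out in the paper.)
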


To prove this proposition, we first observe that it suffices to establish an apparently weaker version in which $s_n(t)$ is allowed to depend on $\eps$:

\begin{proposition}[Frequency localisation, II]\label{freqloc2}  For every $\eps > 0$ there exists $C(\eps) > 0$, such that every sufficiently large $n$ and every $t \in I_n$ there exists $s_{n,\eps}(t) \in \R^+$, such that 
\begin{equation}\label{jon}
 \int_{s < s_{n,\eps}(t)/C(\eps)} \ESD(\phi^{(n)}[t])(s)\ ds + \int_{s > C(\eps) s_{n,\eps}(t)} \ESD(\phi^{(n)}[t])(s)\ ds \leq \eps.
 \end{equation}
\end{proposition}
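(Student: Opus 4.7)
The plan is to argue by contradiction, converting the failure of Proposition \ref{freqloc2} into the frequency delocalisation hypothesis of Theorem \ref{freqbound}, which then yields an entropy bound contradicting our standing assumption that the $(A_n,1)$-entropy of $(\phi^{(n)},I_n)$ tends to infinity. Suppose Proposition \ref{freqloc2} fails: negating its quantifiers, there exist $\eps_0 > 0$ and sequences $C_k \to \infty$, $n_k \to \infty$, $t_k \in I_{n_k}$ such that the ESDs $f_k := \ESD(\phi^{(n_k)}[t_k])$ satisfy
\begin{equation*}
\int_0^{s/C_k} f_k(s')\,ds' + \int_{sC_k}^\infty f_k(s')\,ds' > \eps_0 \qquad \text{for every } s > 0.
\end{equation*}
Equivalently, no window $[s/C_k, sC_k]$ captures more than $E_\crit - \eps_0 + o_k(1)$ of the total mass $\int f_k = \E(\phi^{(n_k)}) = E_\crit + o(1)$. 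Using time translation \eqref{time-trans} and Theorem \ref{symscat}(ii), we may translate each $\phi^{(n_k)}$ so that $t_k = 0$.

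The core step is a pigeonhole argument in log-scale. Setting $b_k := \log C_k \to \infty$, pick a block-width $W_k \to \infty$ with $W_k/b_k \to 0$ (e.g.\ $W_k := \sqrt{b_k}$), and partition $(0,\infty)$ into log-blocks $B_j := [e^{jW_k}, e^{(j+1)W_k})$ of mass $M_j := \int_{B_j} f_k$. Let $j^-_k$ (resp.\ $j^+_k$) be the largest integer with $\sum_{j < j^-_k} M_j < \eps_0/4$ (resp.\ the smallest with $\sum_{j > j^+_k} M_j < \eps_0/4$), so that $\sum_{j^-_k \le j \le j^+_k} M_j > E_\crit - \eps_0/2 + o_k(1)$. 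Since every log-range of width $\le 2b_k$ carries mass $\le E_\crit - \eps_0 + o_k(1)$ by the failure condition, this forces $(j^+_k - j^-_k + 1)W_k > 2b_k$; in particular the number $N_k := j^+_k - j^-_k - 1$ of interior blocks diverges. A further pigeonhole yields an interior index $j^*_k \in (j^-_k, j^+_k)$ with $M_{j^*_k} \le (E_\crit+1)/N_k \to 0$, while $\sum_{j < j^*_k} M_j \ge \eps_0/4$ and $\sum_{j > j^*_k} M_j \ge \eps_0/4$ by construction.

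I would then dilate to move the gap to $s \sim 1$: set $\lambda_k^{-2} := e^{(j^*_k + 1/2)W_k}$ (the geometric centre of $B_{j^*_k}$), $\tilde \Phi_k := \Dil_{\lambda_k}(\phi^{(n_k)}[0])$, and $K_k := e^{W_k/2} \to \infty$. A direct change of variable from \eqref{dil-esd} gives
\begin{equation*}
\int_0^{1/K_k} \ESD(\tilde \Phi_k) \ge \tfrac{\eps_0}{4},\quad \int_{K_k}^\infty \ESD(\tilde \Phi_k) \ge \tfrac{\eps_0}{4},\quad \int_{1/K_k}^{K_k} \ESD(\tilde \Phi_k) = M_{j^*_k} \to 0,
\end{equation*}
together with $\E(\tilde \Phi_k) \to E_\crit$ by scale-invariance---exactly the hypotheses of Theorem \ref{freqbound} with $\eps := \eps_0/4$. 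Since every energy strictly less than $E_\crit$ is good by the definition of $E_\crit$, Theorem \ref{freqbound} produces $A, M > 0$ such that, for every sufficiently large $k$ and every compact $I \ni 0$, $\tilde \Phi_k$ extends to a classical wave map on $I$ with $(A,1)$-entropy at most $M$. Taking $I := \lambda_k I_{n_k}$ and invoking uniqueness (Theorem \ref{lwp-claim}) identifies this extension with $\Dil_{\lambda_k}\phi^{(n_k)}$; the symmetry and monotonicity parts of Theorem \ref{symscat} then give that $(\phi^{(n_k)}, I_{n_k})$ itself has $(A_{n_k},1)$-entropy at most $M$ once $A_{n_k} \ge A$, contradicting our hypothesis that this entropy tends to infinity.

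The main obstacle is the joint parameter choice in the pigeonhole: $W_k$ must diverge (so the rescaled gap has scale ratio $K_k \to \infty$, as Theorem \ref{freqbound} requires) yet grow slowly enough relative to $b_k$ that the number $N_k$ of interior blocks still diverges and forces $M_{j^*_k} \to 0$; any intermediate growth such as $W_k = \sqrt{b_k}$ suffices. A secondary technicality is verifying that $j^*_k$ is chosen \emph{strictly} between $j^\pm_k$ so that both tails of the rescaled ESD retain mass $\ge \eps_0/4$; this holds automatically since $N_k \to \infty$.
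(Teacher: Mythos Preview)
Your proof is correct and follows essentially the same strategy as the paper: argue by contradiction, diagonalise to get $C_k \to \infty$, use a log-scale pigeonhole to locate a frequency gap with small ESD mass flanked by $\geq \eps_0/4$ on each side, rescale the gap to $[1/K_k,K_k]$ with $K_k \to \infty$, and apply Theorem~\ref{freqbound} to force bounded entropy, contradicting the standing hypothesis. The only cosmetic difference is that the paper first uses the intermediate value theorem to pin down an $s_n$ with $\int_{s<s_n/C_n}\ESD = \eps/2$ before pigeonholing inside $[s_n/\sqrt{C_n},\sqrt{C_n}s_n]$, whereas you organise the pigeonhole via a block decomposition with markers $j^\pm_k$; both are equivalent bookkeeping for the same idea.
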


Indeed, suppose that Proposition \ref{freqloc2} was true.  From \eqref{esdloc}, \eqref{jon} we see that given any sufficiently small $\eps > \eps' > 0$, one has 
$$\frac{s_{n,\eps'}(t)}{C(\eps) C(\eps')} \leq s_{n,\eps}(t) \leq C(\eps) C(\eps') s_{n,\eps'}(t)$$
for all sufficiently large $n$ and all $t \in I_n$.  If one then sets $s_n(t) := s_{n,\eps_0}(t)$ for some sufficiently small $\eps_0$ one obtains the claim.

It remains to prove Proposition \ref{freqloc2}.  We argue by contradiction.  If this proposition failed, then there would exist an $\eps > 0$ with the property that for any $C > 0$, there existed arbitrarily large $n$ and a time $t_n \in I_n$ such that
$$
 \int_{s < s_0/C} \ESD(\phi^{(n)}[t_n])(s)\ ds + \int_{s > C s_0} \ESD(\phi^{(n)}[t_n])(s)\ ds \geq \eps$$
for every $s_0 > 0$.
 
Fix this $\eps$, which we may assume to be small.  We now allow all implied constants to depend on $\eps$.

By passing to a diagonal subsequence if necessary, one can find $C_n \to \infty$ and $t_n \in I_n$ such that
$$
 \int_{s < s_0/C_n} \ESD(\phi^{(n)}[t_n])(s)\ ds + \int_{s > C s_0} \ESD(\phi^{(n)}[t_n])(s)\ ds \geq \eps$$
for every $s_0 > 0$.

By the intermediate value theorem, we can find $s_n$ to be such that
$$  \int_{s < s_n/C_n} \ESD(\phi^{(n)}[t_n])(s)\ ds = \eps/2$$
and thus
$$  \int_{s > C_n s_n} \ESD(\phi^{(n)}[t_n])(s)\ ds \geq \eps/2.$$

By the pigeonhole principle, one can find $s'_n$ between $s_n/\sqrt{C_n}$ and $\sqrt{C_n} s_n$ such that
$$ \int_{K_n s'_n \leq s \leq s'_n/K_n} \ESD(\phi^{(n)}[t_n])(s)\ ds  \leq 1/K_n^{100}$$
for some $K_n \to \infty$ and sufficently large $n$ (e.g. one could take $K_n = \log \log C_n$).  

By using time translation and dilation symmetry, we may make $t_n=0 \in I_n$ and $s'_n=1$, thus we have
\begin{equation}\label{jim}
 \int_{1/K_n < s < K_n} \ESD(\phi^{(n)}[0])(s)\ ds = 1/K_n^{100}
\end{equation}
and
$$  \int_{s \geq K_n} \ESD(\phi^{(n)}[0])(s)\ ds, \int_{s \leq 1/K_n} \ESD(\phi^{(n)}[0])(s)\ ds \geq \eps/4.$$
for sufficiently large $n$.  But then by applying Theorem \ref{freqbound} (and the uniqueness theory for classical wave maps) we conclude that $(\phi^{(n)},I_n)$ has a $(O(1),1)$-entropy of $O(1)$ for sufficiently large $n$, contradicting the hypothesis that the $(A_n,1)$-entropy of this wave map goes to infinity (thanks to Theorem \ref{symscat}(i)).  This concludes the proof of Proposition \ref{freqloc2} and hence Proposition \ref{freqloc}.

\subsection{Spatial concentration}

For the next step, we borrow a trick from \cite{borg:scatter}, \cite{borg:book}, and work with the \emph{middle third} of the inteval $I_n$, though for minor technical reasons it is more convenient to work with a middle half.  From the greedy algorithm (and Theorem \ref{symscat}(i)), we see that we can partition each $I_n$ into four consecutive time compact intervals $I_n = I_n^{--} \cup I_n^- \cup I_n^+ \cup I_n^{++}$ such that $(\phi^{(n)}\downharpoonright_{I_n^\alpha},I_n^\alpha)$ has $(A_n,1)$-entropy going to infinity as $n \to \infty$ for $\alpha = --, -, +, ++$.

In the middle half $I_n^- \cup I_n^+$, we will be able to achieve an analogue of Proposition \ref{freqloc} in physical space.  Define the \emph{energy density} $\T_{00}(\phi_0,\phi_1)(x)$ of data $(\phi_0,\phi_1)$ by the formula
$$ \T_{00}(\phi_0,\phi_1)(x) := 
\frac{1}{2} |\phi_1|_{h(\phi_0)}^2
+ \frac{1}{2} |\nabla_x \phi_0|_{h(\phi_0)}^2,$$
thus
$$ \E(\phi_0,\phi_1) = \int_{\R^2} \T_{00}(\phi_0,\phi_1)(x)\ dx.$$

We now use Theorem \ref{spacbound} to conclude

\begin{proposition}[Spatial concentration in the middle half]\label{spatloc-0}  There exists $\eps_0 > 0$ such that for 
 every $n$ and every $t \in I_n^- \cup I_n^+$ there exists $x_n(t) \in \R^2$ such that
$$ \int_{|x-x_n(t)| \leq s_n(t)^2} \T_{00}(\phi^{(n)}[t])(x) dx \geq \eps_0.$$
\end{proposition}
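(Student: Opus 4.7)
The plan is to argue by contradiction, in direct analogy with the proof of Proposition \ref{freqloc2}, but invoking Theorem \ref{spacbound} (spatial-dispersion) in place of Theorem \ref{freqbound} (frequency-delocalisation). If the conclusion fails, then after a diagonal extraction one obtains a subsequence (still labelled by $n$) and times $t_n \in I_n^- \cup I_n^+$ such that
\begin{equation*}
 \sup_{x_0 \in \R^2} \int_{|x-x_0| \leq s_n(t_n)^2} \T_{00}(\phi^{(n)}[t_n])(x)\, dx = o_{n \to \infty}(1).
\end{equation*}
Apply the time translation $\Time_{t_n}$ and the dilation $\Dil_{\lambda_n}$ with $\lambda_n := s_n(t_n)^{-1/2}$ to each $\phi^{(n)}$. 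By Theorem \ref{symscat}(ii) this preserves the property that each of $I_n^{--}, I_n^-, I_n^+, I_n^{++}$ has $(A_n,1)$-entropy tending to infinity; the energy is preserved by the symmetry-invariance of \eqref{energy-def}; and by Proposition \ref{esd-prop}(ii) the content of Proposition \ref{freqloc} is preserved. The distinguished time becomes $0$, and the normalised frequency scale satisfies $s_n^{\text{new}}(0) = 1$.

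Next, I would verify the hypotheses of Theorem \ref{spacbound} with $E_0 := E_\crit$. Strict goodness of energies below $E_\crit$ is the defining property of $E_\crit$; the energy bound \eqref{enbound} follows from \eqref{ecr}; the uniform frequency-localisation hypothesis \eqref{spacloc} is the normalised content of Proposition \ref{freqloc}; and a direct computation using the scaling identity $\T_{00}(\Dil_{\lambda}\phi)(x) = \lambda^{-2}\T_{00}(\phi)(x/\lambda)$ shows that the contradictory hypothesis becomes precisely the spatial-dispersion hypothesis \eqref{disp}. Applying Theorem \ref{spacbound} with $I$ taken to be the entire (normalised) interval $I_n$ then produces constants $A, M > 0$ (independent of $n$) and, for all sufficiently large $n$, an extension of $\phi^{(n)}[0]$ to a classical wave map on either $I_n \cap [0,+\infty)$ or $I_n \cap (-\infty,0]$ whose $(A,1)$-entropy is at most $M$. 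By the uniqueness clause in Theorem \ref{lwp-claim}, this extension coincides with the (normalised) $\phi^{(n)}$ on that half.

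Finally, I derive the contradiction. Since $t_n$ lies in the middle half $I_n^- \cup I_n^+$, the corner intervals $I_n^{--}$ and $I_n^{++}$ are, after normalisation, disjoint from $0$ and lie in opposite halves of the time axis. Whichever half Theorem \ref{spacbound} supplies, the corresponding corner interval is fully contained in that controlled half; by the monotonicity of $(A,\mu)$-wave maps (Theorem \ref{symscat}(i)) together with $A_n \geq A$ eventually, that corner interval would have $(A_n,1)$-entropy at most $M$ for large $n$, contradicting the construction in which all four corner pieces were arranged to have $(A_n,1)$-entropy diverging to infinity. The main obstacle is this one-sided/two-sided tension: Theorem \ref{spacbound} yields scattering bounds only on a single temporal side, so the argument crucially exploits the middle-half choice of $t_n$ to guarantee that both corner pieces remain available, ensuring that whichever side is selected still carries unbounded entropy.
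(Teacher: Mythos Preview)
Your proof is correct and follows essentially the same approach as the paper: argue by contradiction, pass to a subsequence, normalise via time translation and dilation so that $t_n=0$ and $s_n(0)=1$, invoke Theorem \ref{spacbound} together with the uniqueness of classical wave maps, and obtain a uniform $(O(1),1)$-entropy bound on one of the corner intervals $I_n^{--}$ or $I_n^{++}$, contradicting the construction. Your write-up spells out more explicitly the verification of each hypothesis of Theorem \ref{spacbound} and the reason the middle-half location of $t_n$ is needed (namely that Theorem \ref{spacbound} controls only one temporal half-line, so one must be sure a diverging-entropy corner interval lies on whichever side is returned), but the argument is identical to the paper's.
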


\begin{proof}  Suppose this were not the case, then after passing to a subsequence one could find $t_n \in I_n^- \cup I_n^+$ such that
$$ \sup_{x_0 \in \R^2} \int_{|x-x_0| \leq s_n(t_n)^2} \T_{00}(\phi^{(n)}[t_n])(x) dx = o_{n \to \infty}(1).$$
By time translation and dilation symmetries \eqref{time-trans}, \eqref{cov-scaling} we may normalise $t_n=0$ and $s_n(t_n)=1$, thus
$$ \sup_{x_0 \in \R^2} \int_{|x-x_0| \leq 1} \T_{00}(\phi^{(n)}[0])(x) dx = o_{n \to \infty}(1).$$
But then by Theorem \ref{spacbound} (and the uniqueness theory for classical wave maps, and Theorem \ref{symscat}(i)) we see that either $(\phi^{(n)}\downharpoonright_{I_n^{--}}, I_n^{--})$ or $(\phi^{(n)}\downharpoonright_{I_n^{++}}, I_n^{++})$ has an $(O(1),1)$-entropy of $O(1)$ for sufficiently large $n$, a contradiction.
\end{proof}

Similarly, we can use Theorem \ref{spacdeloc} to conclude

\begin{proposition}[Spatial localisation in the middle half]\label{spatloc}  Let the notation and assumptions be as above.  Then for 
 every $n$ and every $t \in I_n^- \cup I_n^+$ there exists $x_n(t) \in \R^2$, such that for every $\eps > 0$ there exists a quantity $C(\eps) > 0$, such that
$$ \int_{|x-x_n(t)| \geq C(\eps) s_n(t)^2} \T_{00}(\phi^{(n)}[t])(x) dx \leq \eps.$$
\end{proposition}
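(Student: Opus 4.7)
The approach mirrors the proof of Proposition \ref{spatloc-0}, with Theorem \ref{spacdeloc} invoked in place of Theorem \ref{spacbound}. The crucial observation is that one should use the very same centers $x_n(t)$ already furnished by Proposition \ref{spatloc-0}, so that the small-radius concentration and the hypothetical large-radius tail are based at a common point, as required by the hypotheses of Theorem \ref{spacdeloc}.

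Suppose the conclusion fails. Then there exists $\eps > 0$ with the property that for every $C > 0$ one can find arbitrarily large $n$ together with $t_n \in I_n^- \cup I_n^+$ such that
\begin{equation*}
\int_{|x - x_n(t_n)| \geq C\, s_n(t_n)^2} \T_{00}(\phi^{(n)}[t_n])(x)\, dx \geq \eps.
\end{equation*}
A diagonal extraction yields a sequence $C_n \to \infty$ and $t_n \in I_n^- \cup I_n^+$ for which the above holds with $C_n$ in place of $C$. Using the symmetries \eqref{time-trans}, \eqref{space-trans}, \eqref{cov-scaling}---which preserve the class of classical wave maps, the $(A,\mu)$-wave map notion (Theorem \ref{symscat}(ii)), the partition of $I_n$ into the four consecutive subintervals, and rescale $\ESD$ according to Proposition \ref{esd-prop}(ii)---we normalise to $t_n = 0$, $x_n(0) = 0$, and $s_n(0) = 1$.

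After this normalisation, Proposition \ref{spatloc-0} provides $\int_{|x| \leq 1} \T_{00}(\phi^{(n)}[0])(x)\, dx \geq \eps_0$; the hypothesis we are contradicting provides $\int_{|x| \geq C_n} \T_{00}(\phi^{(n)}[0])(x)\, dx \geq \eps$; Proposition \ref{freqloc} evaluated at $s_n(0) = 1$ supplies the uniform frequency-localisation bound \eqref{spacloc}; and \eqref{ecr} together with energy conservation supplies the energy bound \eqref{enbound} with $E_0 := E_\crit$. Since every energy strictly less than $E_\crit$ is good, this choice of $E_0$ is legitimate and Theorem \ref{spacdeloc} applies with $R_n = C_n \to \infty$. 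We obtain absolute constants $A, M > 0$ such that $\phi^{(n)}[0]$ extends, on every compact time interval containing $0$, to a classical wave map of $(A,1)$-entropy at most $M$. By the uniqueness theory for classical wave maps this extension must coincide with $\phi^{(n)}$ itself on (the translated) $I_n^- \cup I_n^+$, which contains $0$; hence $(\phi^{(n)}\!\downharpoonright_{I_n^- \cup I_n^+}, I_n^- \cup I_n^+)$ has $(A,1)$-entropy at most $M$. By the monotonicity in Theorem \ref{symscat}(i), the same bound holds for the restrictions to $I_n^-$ and $I_n^+$ individually, and since $A_n \geq A$ eventually, these restrictions have $(A_n, 1)$-entropy at most $M$ as well. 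This contradicts their construction, which forced the $(A_n,1)$-entropy of $\phi^{(n)}\!\downharpoonright_{I_n^\pm}$ to tend to infinity.

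The only genuine obstacle is bookkeeping: one must commit in advance to using the centers $x_n(t)$ produced by Proposition \ref{spatloc-0}, so that a single spatial translation can simultaneously bring the inner concentration and the outer tail to the common origin demanded by Theorem \ref{spacdeloc}; any other choice would force one to work with two distinct translations, which is incompatible with the hypotheses of that theorem. Once this is arranged, the argument is a direct transposition of the proofs of Propositions \ref{freqloc2} and \ref{spatloc-0}.
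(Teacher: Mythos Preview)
Your proof is correct and follows essentially the same approach as the paper's: fix the centers $x_n(t)$ from Proposition \ref{spatloc-0}, argue by contradiction to extract $C_n\to\infty$ and $t_n$, normalise via the symmetries, and apply Theorem \ref{spacdeloc} together with uniqueness to obtain a uniform entropy bound contradicting the construction of the $\phi^{(n)}$. You are somewhat more explicit than the paper in spelling out the spatial translation and in checking the frequency-localisation hypothesis \eqref{spacloc} via Proposition \ref{freqloc}, and you derive the contradiction on $I_n^\pm$ rather than on all of $I_n$, but these are cosmetic differences.
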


\begin{proof}  Let $x_n(t)$ be as in Proposition \ref{spatloc-0}.  Suppose the claim failed for this choice of $x_n(t)$; then one can find an $\eps > 0$ such that for any $C > 0$ one can find arbitrarily large $n$ such that
$$ \int_{|x-x_n(t_n)| \geq C s_n(t_n)^2} \T_{00}(\phi^{(n)}[t_n])(x) dx \geq \eps.$$
In particular, by passing to a subsequence, one can find $C_n \to \infty$ such that
$$ \int_{|x-x_n(t_n)| \geq C_n s_n(t_n)^2} \T_{00}(\phi^{(n)}[t_n])(x) dx \geq \eps.$$
By time translation and dilation symmetries \eqref{time-trans}, \eqref{cov-scaling} we may normalise $t_n=0$ and $s_n(t_n)=1$, thus
$$ \int_{|x-x_n(0)| \geq C_n} \T_{00}(\phi^{(n)}[0])(x) dx \geq \eps.$$
Meanwhile, from Proposition \ref{spatloc-0}
$$ \int_{|x-x_n(0)| \leq 1} \T_{00}(\phi^{(n)}[0])(x) dx \geq \eps_0.$$
Applying Theorem \ref{spacdeloc} (and the uniqueness theory for classical wave maps), we conclude that $(\phi^{(n)},I_n)$ has a $(O(1),1)$-entropy of $O(1)$ for sufficiently large $n$, a contradiction.
\end{proof}

Now we use the symmetries \eqref{space-trans}, \eqref{time-trans}, \eqref{cov-scaling} to normalise the solutions $\phi^{(n)}$.  First, by using \eqref{time-trans} (and Theorem \ref{symscat}), we may assume that the intervals $I_n^-, I_n^+$ meet at the time origin $t=0$ for all $n$.  Next, by the dilation symmetry \eqref{cov-scaling} (and \eqref{dil-esd}), we may assume that $s_n(0) = 1$ for all $n$.  Finally, by using the translation symmetry \eqref{space-trans} (and \eqref{spacetrans-esd}), we may assume that $x_n(0)=0$ for all $n$.  In particular, we now see that for every $\eps > 0$ we have $C(\eps) > 0$ such that
$$ \int_{s < 1/C(\eps)} \ESD(\phi^{(n)}[0])(s)\ ds + \int_{s > C(\eps)} \ESD(\phi^{(n)}[0])(s)\ ds \leq \eps$$
and
$$ \int_{|x| \geq C(\eps)} \T_{00}(\phi^{(n)}[0])(x) dx \leq \eps$$
for all sufficiently large $n$.

\subsection{Conclusion of the argument}

To take advantage of the above localisations we use the following proposition, proven in Section \ref{compact-sec}:

\begin{proposition}[Compactness]\label{precom}  Let $C: \R^+ \to \R^+$ be a function and $E \geq 0$.  Then there exists a compact subset $K$ of $\Energy$ with the following properties:
\begin{itemize}
\item[(i)] If $(\phi^{(n)}_0, \phi^{(n)}_1) \in \S$ is a sequence with $\E(\phi^{(n)}_0, \phi^{(n)}_1) \leq E$ with the property that for each $\eps > 0$, one has
\begin{equation}\label{esd-n}
\int_{s < 1/C(\eps)} \ESD(\phi^{(n)}_0,\phi^{(n)}_1)(s)\ ds + \int_{s > C(\eps)} \ESD(\phi^{(n)}_0,\phi^{(n)}_1)(s)\ ds \leq \eps
\end{equation}
and
\begin{equation}\label{locale}
 \int_{|x| \geq C(\eps)} \T_{00}(\phi^{(n)}_0,\phi^{(n)}_1)(x) dx \leq \eps
\end{equation}
for all sufficiently large $n$, then there is a subsequence $(\phi^{(n_j)}_0, \phi^{(n_j)}_1)$ such that $\iota(\phi^{(n_j)}_0, \phi^{(n_j)}_1)$ converges in $\Energy$ to a limit in $K$.
\item[(ii)]  For all $\Phi \in K$, one has
$$ \int_{s < 1/C(\eps)} \ESD(\Phi)(s)\ ds + \int_{s > C(\eps)} \ESD(\Phi)(s)\ ds \leq \eps$$
and
$$ \int_{|x| \geq C(\eps)} \T_{00}(\Phi)(x) dx \leq \eps$$
for all $\eps > 0$.
\end{itemize}
\end{proposition}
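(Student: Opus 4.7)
The plan is to define $K$ as the closure in $\Energy$ of the set
\[
K_0 := \{\iota(\phi_0, \phi_1) : (\phi_0,\phi_1) \in \S,\ \E(\phi_0,\phi_1) \leq E,\ \text{and \eqref{esd-n}, \eqref{locale} hold}\}.
\]
The two parts of the proposition then split: (ii) says that the closed conditions defining $K_0$ pass to limits under $\Energy$ convergence, while (i) is a precompactness assertion.

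Part (ii) follows from continuity. Suppose $\Phi_n \to \Phi$ in $\Energy$ with each $\Phi_n \in K_0$. By Theorem \ref{energy-claim}(iv), $\T_{00}(\Phi_n) \to \T_{00}(\Phi)$ in $L^1(\R^2)$, so \eqref{locale} passes to $\Phi$. For the frequency bounds, Proposition \ref{esd-prop}(iii) gives $\ESD(\Phi_n) \to \ESD(\Phi)$ uniformly on each compact subinterval of $(0,\infty)$; applying this on $[C(\eps),R]$ and letting $R \to \infty$ (using that $\int_0^\infty \ESD(\Phi) = \E(\Phi) < \infty$, so tails exist by monotone convergence), and similarly on $[\delta,1/C(\eps)]$ with $\delta \to 0$, one recovers \eqref{esd-n} for $\Phi$. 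Thus $K = \overline{K_0}$ inherits the bounds.

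Part (i) is the substance of the proposition. Given a sequence $\Phi_n = \iota(\phi^{(n)}_0, \phi^{(n)}_1)$ satisfying the hypotheses, the goal is to extract a Cauchy subsequence in $\Energy$, after which completeness delivers a limit in $K$. The strategy is to use the harmonic map heat flow as a linearising mollifier. Using that $\ESD(\phi_0,\phi_1)(s)$ is the instantaneous rate of energy dissipation of the heat flow at heat-time $s$ (as will be set up in Section \ref{esd-sec}), the hypothesis \eqref{esd-n} shows that at most $\eps$ of the total energy is removed during heat-times $s < 1/C(\eps)$ or $s > C(\eps)$. Fixing any $s_0$ in $(1/C(\eps), C(\eps))$, the heat-flow development $\phi^{(n)}(s_0, \cdot)$ thus differs from the original data by $O(\eps)$ in $\Energy$ distance, uniformly in $n$. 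At this positive heat-time, parabolic regularisation makes $\phi^{(n)}(s_0,\cdot)$ uniformly smooth with $C^k$ bounds depending only on $E$ and $s_0$, while the spatial localisation \eqref{locale} is essentially preserved (up to parabolic tails of width $O(s_0^{1/2})$, which contribute $O(\eps)$ to the $L^1$ energy density). Embedding $\H^m \hookrightarrow \R^{1+m}$ and applying a standard Arzel\`a--Ascoli / Rellich--Kondrachov argument extracts a subsequence along which $\phi^{(n)}(s_0,\cdot)$ converges smoothly, and hence in $\Energy$, to a limit. Combining with the previous estimate, the sequence is Cauchy in $\Energy$ up to an $O(\eps)$ error, and a diagonal extraction over $\eps \to 0$ produces a genuinely Cauchy subsequence.

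The main obstacle is quantitatively transporting the spatial localisation through the heat flow and verifying that the $\Energy$ distance between $\Phi_n$ and its heat-regularisation $\phi^{(n)}(s_0,\cdot)$ is controlled by the ESD tails with the correct uniformity in $n$. These are essentially bookkeeping lemmas tying together the intrinsic definitions of $\Energy$ and $\ESD$ from Section \ref{esd-sec}; once in place, the core Rellich--Kondrachov compactness and the diagonal extraction are routine.
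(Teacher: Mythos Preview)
Your overall strategy is in the right spirit, and it parallels the paper's approach in using heat-flow regularisation plus Arzel\`a--Ascoli. But the step you label ``essentially bookkeeping'' is in fact the technical heart of the argument, and the justification you give for it is not correct as written.

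First, a slip: you take $s_0 \in (1/C(\eps), C(\eps))$, but then the energy dissipated on $[0,s_0]$ is $\int_0^{s_0} \ESD$, which includes the bulk interval $[1/C(\eps), s_0]$ and need not be $O(\eps)$; only the tail $[0,1/C(\eps)]$ is controlled by the hypothesis. You presumably meant $s_0 \leq 1/C(\eps)$.

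More seriously, even with the corrected $s_0$, the inference ``at most $\eps$ of the energy is removed, hence the heat-flowed data is $O(\eps)$-close in $\Energy$ distance'' conflates two different things. The $\Energy$ metric (Section~\ref{energydef-sec}) is defined as the $L^2(\R^+\times\R^2)\times L^2(\R^2)$ norm of the difference of the caloric-gauge fields $(\psi_s,\psi_t(0))$, and for the heat-shifted data these fields are $(\psi_s(\cdot+s_0), \psi_t(s_0))$. Small energy dissipation says $\int_0^{s_0}\|\psi_s\|_{L^2}^2 + \tfrac12(\|\psi_t(0)\|_{L^2}^2-\|\psi_t(s_0)\|_{L^2}^2)\le\eps$, which is a statement about \emph{norms}, not about $\|\psi_t(0)-\psi_t(s_0)\|_{L^2}$ or $\int_0^\infty\|\psi_s(s)-\psi_s(s+s_0)\|_{L^2}^2\,ds$. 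For the $\psi_s$ contribution, a naive mean-value estimate using the parabolic bound $\|\partial_s\psi_s(s)\|_{L^2}\lesssim_E s^{-3/2}$ gives only $\int_{s_0}^\infty\|\psi_s(s)-\psi_s(s+s_0)\|_{L^2}^2\,ds \lesssim_E 1$, not $o(1)$. For the $\psi_t$ contribution, proximity of $L^2$ norms does not give $L^2$ proximity.

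The paper sidesteps this by not comparing the data to a heat-shifted version at all. Instead it works directly with the components of the $\Energy$ metric: it shows the family $\{\psi_s^{(n)}\}$ is tight in $L^2(\R^+\times\R^2)$ (tightness in $s$ from \eqref{esd-n}, tightness in $x$ from a mass-diffusion inequality and \eqref{locale}) and equicontinuous on compacta (Proposition~\ref{corbound}), hence precompact in $L^2$; and separately, for $\psi_t^{(n)}$, it proves convergence at each fixed $s>0$ by the same mechanism, and then a genuine ``stability'' lemma establishing $\|\psi_t^{(n)}(0)-\psi_t^{(n)}(s_\eps)\|_{L^2}^2\lesssim_E\eps$. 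That lemma is not bookkeeping: it requires pairing $\psi_t^{(n)}(s)$ against the fixed test function $\psi_t^{(n)}(s_\eps)$, differentiating in $s$ using the covariant heat equation, and controlling the resulting bilinear expression via H\"older, Gagliardo--Nirenberg, and the $\ESD$ bound. Your proposal would need an equivalent lemma (and a separate argument for the $\psi_s$ part), so the ``bookkeeping'' characterisation understates what remains.
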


Using this proposition, we see (after passing to a subsequence) that $\iota(\phi^{(n)}[0])$ converges in $\Energy$ to some limit $\Phi_0 \in \Energy$.  Since $\E(\phi^{(n)}[0]) \to E_{\crit}$, we conclude (by Theorem \ref{energy-claim}(iv)) that
$$ \E( \Phi_0 ) = E_\crit.$$
In particular, $\Phi_0$ has non-zero energy.

Now let $\phi: I \to \Energy$ be the maximal Cauchy development with $\phi[0] = \Phi_0$, as given by Theorem \ref{lwp-claim}.  For any compact subinterval $J$ of $I$ containing $0$, we claim that $J \subset I_n^- \cup I_n^+$ for all sufficiently large $n$.  Indeed, from Corollary \ref{finsiz} we see that the $(A_n,1)$-entropy of $(\phi^{(n)}\downharpoonright_J, J)$ is bounded in $n$ for sufficiently large $n$, while from construction the $(A,1)$ entropy of $(\phi^{(n)}\downharpoonright_{I_n^-}, I_n^-)$, $(\phi^{(n)}\downharpoonright_{I_n^+}, I_n^+)$ go to infinity.  By Theorem \ref{symscat}(i) we conclude that $J$ cannot contain either $I_n^-$ or $I_n^+$, and the claim follows.

Now let $t \in I$, then $\E(\phi[t]) = E_\crit$, and $t \in I_n^- \cup I_n^+$ for all sufficiently large $n$.  From Theorem \ref{lwp-claim}(iv), we see that $\iota(\phi^{(n)}[t])$ converges in $\Energy$ to $\phi[t]$.  On the other hand, for each $0 < \eps < E_\crit$ we have
$$ \int_{s < s_n(t)/C(\eps)} \ESD(\phi^{(n)}[t])(s)\ ds + \int_{s > C(\eps) s_n(t)} \ESD(\phi^{(n)}[t])(s)\ ds \leq \eps$$
for all sufficiently large $n$.   If some subsequence of $s_n(t)$ converges to $0$ or $+\infty$ then we see from Proposition \ref{esd-prop} that $\E(\phi[t]) < E_\crit$, contradiction.  So $s_n(t)$ must instead have an accumulation point at some $0 < s(t) < \infty$.  From Proposition \ref{esd-prop}(iii) and Fatou's lemma, we conclude that
$$ \int_{s < s(t)/C(\eps)} \ESD(\phi[t])(s)\ ds + \int_{s > C(\eps) s(t)} \ESD(\phi[t])(s)\ ds \leq \eps.$$
Next, we have
$$ \int_{|x-x_n(t)| \geq C(\eps) s_n(t)^2} \T_{00}(\phi^{(n)}[t])(x) dx \leq \eps.$$
Let us temporarily restrict to a subsequence where $s_n(t) \to s(t)$.  From Theorem \ref{energy-claim}(iv) we see as before that $x_n(t)$ cannot have a subsequence diverging to infinity, and instead has a subsequence converging to some $x(t) \in \R^2$; taking limits again we conclude
$$ \int_{|x-x(t)| \geq C(\eps) s(t)^2} \T_{00}(\phi[t])(x) dx \leq \eps.$$
Applying Proposition \ref{precom}, we conclude that
$$ \Dil_{s(t)^{-1/2}} \Trans_{-x(t)} \phi[t] \in K$$
for some compact subset $K$ of $\Energy$, and Theorem \ref{mainthm} follows.

\section{The energy space}\label{engen}

In this section we recall the precise definition of the energy space defined in \cite{tao:heatwave2}.  We will also define the energy spectral distribution $\ESD(\phi_0,\phi_1)$ used in the proof of Theorem \ref{minimal}, and establish its basic properties in Proposition \ref{esd-prop}.  Finally, we will also establish key compactness property, see Proposition \ref{precom}.

\subsection{The caloric gauge}

We first recall a global existence theorem for the harmonic map heat flow, essentially due to Eells and Sampson \cite{eells}.

\begin{theorem}[Global existence for heat flow]\label{ghp}  Let $I$ be a compact interval, and let $\phi: I \times \R^2 \to \H$ be a smooth map differs from a constant $\phi(\infty)$ by a Schwartz function.  Then there exists a unique extension $\phi: \R^+ \times I \times \R^2 \to H$ of $\phi$ (thus $\phi(0,t,x) = \phi(t,x)$ for all $(t,x) \in I \times \R^2$) which is smooth with all derivatives bounded, and obeys the harmonic map heat flow equation
$$ \partial_s \phi = (\phi^* \nabla)_i \partial_i \phi$$
on $\R^+ \times I \times \R^2 := \{ (s,t,x): s \in \R^+, t \in I, x \in \R^2\}$, and converges in $C^\infty_{\loc}(I \times \R^2)$ to $\phi(\infty)$ as $s \to \infty$.
\end{theorem}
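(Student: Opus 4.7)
The plan is to view the harmonic map heat flow as a family of parabolic evolution equations in $s$, parameterised by $t\in I$ (and with spatial variable $x\in\R^2$), and reduce the global existence claim to the classical Eells--Sampson theory. To set this up, I would embed $\H^m$ isometrically into the Minkowski target $\R^{1+m}$ so that the intrinsic covariant heat flow can be rewritten as a semilinear parabolic system
\[
  \partial_s \phi - \Delta_x \phi = \Pi(\phi)(\partial_i \phi, \partial_i \phi),
\]
where $\Pi$ is the second fundamental form of the embedding. Writing $\phi = \phi(\infty) + \psi$, the initial data is Schwartz in $x$ and smooth in $t\in I$, which reduces the problem to an analysis of $\psi$.

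For \textbf{local existence in $s$}, I would run a contraction mapping argument in the Banach space of functions which are Schwartz in $x$, with constants that are $C^k$ in $t\in I$ for each $k$, exploiting the smoothing properties of the Euclidean heat semigroup $e^{s\Delta_x}$ on $\R^2$ applied parametrically to the quadratic nonlinearity. This produces a unique smooth solution on some slab $[0,s_0)\times I \times \R^2$, together with quantitative Schwartz bounds that are uniform in $t\in I$, and depend continuously on $t$-derivatives of the initial data. Uniqueness follows by taking the difference of two candidate solutions and using a Gr\"onwall argument in the embedded picture.

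For \textbf{global existence}, the key input is that $\H^m$ has constant negative sectional curvature, in particular nonpositive curvature. Here I would invoke the Bochner-type identity of Eells--Sampson: the energy density $e(s,t,x) := \tfrac12 |\nabla_x \phi|_h^2$ satisfies a differential inequality of the form $(\partial_s - \Delta_x) e(s,t,x) \leq 0$ pointwise (the curvature term having a favourable sign). By the maximum principle on $\R^2$, using the Schwartz decay in $x$ as boundary condition at infinity, $\|\nabla_x\phi(s,t,\cdot)\|_{L^\infty_x}$ stays bounded in $s$ uniformly in $t\in I$. Together with standard parabolic bootstrapping (differentiating the equation in $x$, $t$ and $s$, and iterating the Bochner/maximum principle argument for higher derivatives) this rules out blowup and extends the solution to all $s\in\R^+$ with bounded derivatives of every order.

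For the \textbf{Schwartz decay and long-time behaviour}, I would propagate the Schwartz bounds in $x$ using the Duhamel formula for $\psi$ and the polynomial decay of the heat kernel convolved against Schwartz functions; the quadratic nonlinearity does not disturb Schwartz decay once global $L^\infty$ bounds on $\nabla_x\phi$ are in hand. To show $\phi(s,t,x) \to \phi(\infty)$ in $C^\infty_{\mathrm{loc}}(I\times\R^2)$, I would use the energy dissipation identity
\[
  \frac{d}{ds}\int_{\R^2} e(s,t,x)\,dx \;=\; -\int_{\R^2} |\partial_s \phi(s,t,x)|_h^2\,dx \;\leq\; 0,
\]
which forces $\partial_s\phi \to 0$ in a time-integrated sense; combining with the uniform parabolic regularity gives subsequential convergence to a harmonic map $\phi_\infty(t,\cdot):\R^2\to\H$ that differs from $\phi(\infty)$ by a Schwartz function. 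Since $\H$ has nonpositive curvature, the standard Bochner argument shows that any such finite-energy harmonic map from $\R^2$ is constant, so $\phi_\infty(t,\cdot) = \phi(\infty)$; uniqueness of the limit upgrades subsequential convergence to full convergence, uniformly in $t$.

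The \textbf{main obstacle} I anticipate is not any single step but rather keeping all estimates uniform in the parameter $t\in I$ and compatible with Schwartz decay at every order: the Bochner maximum-principle argument is straightforward for the bare energy density, but propagating higher $x$- and $t$-derivative bounds while simultaneously maintaining Schwartz decay requires a careful induction on the order of differentiation, where each step relies on the $L^\infty$ bounds already established for lower-order derivatives. The convergence to the prescribed constant $\phi(\infty)$ as $s\to\infty$ is the other delicate point, since without the Schwartz/finite-energy constraint one could in principle have other harmonic limits; thus preserving decay throughout the flow is essential to pin down the limit.
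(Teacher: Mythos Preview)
The paper does not prove this theorem here: its entire proof is the citation ``See \cite[Theorem 3.16]{tao:heatwave2}.''  So there is no in-paper argument to compare against line by line.  That said, the paper itself attributes the result to Eells--Sampson, and your outline is exactly the Eells--Sampson strategy (local parabolic existence, Bochner inequality $(\partial_s-\Delta)e\le 0$ from nonpositive curvature, maximum principle to prevent blowup, energy dissipation plus a Liouville statement for finite-energy harmonic maps from $\R^2$ to force convergence to the constant $\phi(\infty)$).  This is the right architecture and is consistent with what the cited reference carries out.

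Two small technical caveats worth flagging, though neither is a genuine gap.  First, your extrinsic formulation embeds $\H^m$ in $\R^{1+m}$ with its \emph{Minkowski} (indefinite) metric; the second fundamental form $\Pi$ and the notion of ``isometric embedding'' have to be interpreted accordingly, and the maximum principle/Bochner argument is cleanest in the intrinsic form $(\partial_s-\Delta)e\le 0$ rather than in extrinsic coordinates.  It is fine to use the embedding only to write the PDE as a semilinear system for local theory and Schwartz bookkeeping, while running the global a priori bounds intrinsically.  Second, for the long-time limit you need a bit more than $\int_0^\infty\|\partial_s\phi\|_{L^2}^2\,ds<\infty$ to get pointwise convergence of $\phi(s)$ (as opposed to convergence of $\partial_s\phi$ to zero along a subsequence); the standard fix is to combine the uniform higher-order parabolic bounds with the monotone decay of the energy to show that $\phi(s)$ is Cauchy in $C^k_{\loc}$ as $s\to\infty$, and then identify the limit as a finite-energy harmonic map, hence constant.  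You have essentially said this, but be explicit that uniqueness of the limit is what upgrades subsequential to full convergence.
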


\begin{proof} See \cite[Theorem 3.16]{tao:heatwave2}.
\end{proof}

This result allows us to extend any classical wave map $\phi \in WM(I,E)$ by harmonic map heat flow into the region $\R^+ \times I \times \R^2$.  In order to analyse this heat flow we will place an orthonormal frame on this map (or more precisely on the pullback tangent bundle $\phi^* T\H$).

Given any point $p \in \H$, define an \emph{orthonormal frame} at $p$ to be any orthogonal orientation-preserving map $e: \R^m \to T_p \H$ from $\R^m$ to the tangent space at $p$ (with the metric $h(p)$, of course), and let $\Frame(T_p \H)$ denote the space of such frames; note that this space has an obvious transitive action of the special orthogonal group $SO(m)$.  We then define the \emph{orthonormal frame bundle} $\Frame( \phi^* T\H)$ of $\phi$ to be the space of all pairs $((s,t,x), e)$ where $(s,x) \in \R^+ \times I \times \times \R^2$ and $e \in \Frame(T_{\phi(s,x)}\H)$; this is a smooth vector bundle over $\R^+ \times I \times \R^2$.  We then define an \emph{orthonormal frame} $e \in \Gamma(\Frame(\phi^* T\H))$ for $\phi$ to be a section of this bundle, i.e. a smooth assignment $e(s,x) \in \Frame(T_{\phi(s,x)}\H)$ of an orthonormal frame at $\phi(s,x)$ to every point $(s,x) \in \R^+ \times I \times \R^2$.

Each orthonormal frame $e \in \Gamma(\Frame(\phi^* T\H))$ provides an orthogonal, orientation-preserving identification between the vector bundle $\phi^* T \H$ (with the metric $\phi^* h$) and the trivial bundle $(\R^+ \times I \times \R^2) \times \R^m$ (with the Euclidean metric on $\R^m$), thus sections $\Psi \in \Gamma(\phi^* T\H)$ can be pulled back to functions $e^* \Psi: \R^+ \times I \times \R^2 \to \R^m$ by the formula $e^* \Psi := e^{-1} \circ \Psi$.  The connection $\phi^* \nabla$ on $\phi^* T\H$ can similarly be pulled back to a connection $D$ on the trivial bundle $(\R^+ \times \R^2) \times \R^m$, defined by
\begin{equation}\label{D-def}
 D_i := \partial_i + A_i
 \end{equation}
where $A_i \in \mathfrak{so}(m)$ is the skew-adjoint $m \times m$ matrix field is given by the formula
\begin{equation}\label{Adef}
(A_i)_{ab} = \langle (\phi^* \nabla)_i e_a, e_b \rangle_{\phi^* h}
\end{equation}
where $e_1,\ldots,e_m$ are the images of the standard orthonormal basis for $\R^m$ under $e$.  Of course one similarly has covariant derivatives $D_t = \partial_t + A_t$,  $D_s = \partial_s + A_s$ in the $t$ and $s$ directions.  

Given such a frame, we define the \emph{derivative fields}
$\psi_j: \R^+ \times I \times \R^2 \to \R^m$ by the formula 
\begin{equation}\label{psij-def}
\psi_j := e^* \partial_j \phi,
\end{equation}
and similarly define 
\begin{equation}\label{psis-def}
\psi_s := e^* \partial_s \phi; \quad \psi_t := e^* \partial_t \phi.
\end{equation}
We record the zero-torsion property
\begin{equation}\label{zerotor-frame}
D_i \psi_j = D_j \psi_i
\end{equation}
and the constant negative curvature property
\begin{equation}\label{curv-frame}
[D_i, D_j] = \partial_i A_j - \partial_j A_i + [A_i,A_j] = - \psi_i \wedge \psi_j
\end{equation}
where $\psi_i \wedge \psi_j$ is the anti-symmetric matrix field
$$ \psi_i \wedge \psi_j := \psi_i \psi_j^* - \psi_j \psi_i^*.$$
The harmonic map heat flow equation becomes
\begin{equation}\label{heatflow}
\psi_s = D_i \psi_i = \partial_i \psi_i + A_i \psi_i.
\end{equation}
The wave maps equation \eqref{cov} becomes
\begin{equation}\label{w-vanish}
w|_{s=0} = 0
\end{equation}
where $w: \R^+ \times I \times \R^2 \to \R^m$ is the \emph{wave-tension field}
\begin{equation}\label{w-def}
w := D^\alpha \psi_\alpha = \partial^\alpha \psi_\alpha + A^\alpha \psi_\alpha.
\end{equation}

Following \cite{tao:heatwave2}, we say that an orthonormal frame $e$ is a \emph{caloric gauge} if one has 
\begin{equation}\label{ass}
A_s = 0
\end{equation}
throughout $\R^+ \times I \times \R^2$, and if we have 
\begin{equation}\label{esx}
\lim_{s \to \infty} e(s,t,x) = e(\infty)
\end{equation}
for all $x \in \R^2$ and some constant $e(\infty) \in \Frame(T_{\phi(\infty)}\H)$.  

We have the following existence theorem for this gauge:

\begin{theorem}[Existence of caloric gauge]\label{dynamic-caloric}  Let $I$ be a time interval with non-empty interior, let $\phi: I \times \R^2 \to \H$ be a smooth map which differs from a constant $\phi(\infty)$ by a Schwartz function in space, and let $e(\infty) \in \Frame(T_{\phi(\infty)}\H)$ be a frame for $\phi(\infty)$.  Let $\phi: \R^+ \times I \times \R^2 \to \H$ be the heat flow extension from Theorem \ref{ghp}.  Then there exists a unique smooth frame $e \in \Gamma(\Frame(\phi^* T\H))$ such that $e(t)$ is a caloric gauge for $\phi(t)$ which equals $e(\infty)$ at infinity for each $t \in I$.   All derivatives of $\phi - \phi(\infty)$ in the variables $t,x,s$ are Schwartz in $x$ for each fixed $t,s$.  In particular, these derivatives are uniformly Schwartz for $t, s$ in a compact range.
\end{theorem}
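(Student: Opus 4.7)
The plan is to construct $e$ fiberwise by parallel transport along the $s$-direction of the heat flow extension provided by Theorem \ref{ghp}. By \eqref{Adef}, the condition $A_s = 0$ is equivalent to $(\phi^* \nabla)_s e_a = 0$ for each basis vector $e_a$ of the frame, so for each fixed $(t,x)$ the curve $s \mapsto e(s,t,x)$ must be parallel transport (with respect to the Levi-Civita connection on $\H$) along $s \mapsto \phi(s,t,x)$. Uniqueness, given the boundary value \eqref{esx} at $s = +\infty$, is then immediate from uniqueness of ODE solutions.

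For existence, I would first establish Schwartz decay of $\phi - \phi(\infty)$ and all its $(s,t,x)$-derivatives, with sufficient decay in $s$ to make the parallel transport from infinity well-defined. In a normal coordinate chart around $\phi(\infty) \in \H$, the harmonic map heat flow becomes a semilinear parabolic system for $\phi - \phi(\infty)$ whose nonlinearity is at least quadratic in $\partial_x \phi$ (with coefficients depending smoothly on $\phi$). Since the initial data at $s=0$ is Schwartz in $x$ uniformly on compact $t$-intervals by hypothesis, a standard parabolic bootstrap (propagation of Schwartz regularity plus smoothing estimates) yields $|\partial_x^\alpha \partial_t^k \partial_s^j (\phi - \phi(\infty))(s,t,x)| \lesssim_{\alpha,k,j,N} \langle x \rangle^{-N} \langle s \rangle^{-j}$ uniformly for $t$ in any compact set, giving in particular that $\int_0^\infty |\partial_s \phi(s,t,x)|_{h(\phi)}\, ds$ is finite and Schwartz in $x$. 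With this in hand, I would solve the parallel-transport ODE backward from $s_0$ with datum $e(\infty)$; in the normal chart it reads
\[ \partial_s e(s) = -\Gamma(\phi(s))[\partial_s \phi(s)]\, e(s), \]
and Gr\"onwall together with the $s$-integrability of $|\partial_s \phi|$ gives a well-defined limit as $s_0 \to \infty$, which coincides with $e(\infty)$ in the sense of \eqref{esx}.

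Smoothness of $e$ in $(s,t,x)$ then follows from smooth dependence of ODE solutions on parameters, and the Schwartz-in-$x$ statement of the theorem is inherited by differentiating the parallel-transport ODE in $x$ or $t$ and integrating from $s = \infty$: the forcing term involves Schwartz-decaying derivatives of $\phi$, so $\partial_x^\alpha (e - e(\infty))$ also decays at the Schwartz rate in $x$, uniformly for $(s,t)$ in compact ranges. The main obstacle is the quantitative heat-flow analysis, i.e.\ proving both Schwartz persistence in $x$ and sufficient decay in $s$ of $\partial_s \phi$ and its higher derivatives; this is a standard but delicate parabolic bootstrap, complicated by the fact that the nonlinearity $(\phi^*\nabla)_i \partial_i \phi$ carries curvature contributions from $\H$ that couple the coordinate components of $\phi$. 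The bounded geometry of hyperbolic space, together with the fact that the heat flow is a contraction on energy and preserves smallness near $\phi(\infty)$ for large $s$, is what makes the bootstrap close; once it closes, both the construction of the caloric frame and the claimed Schwartz regularity of $\phi - \phi(\infty)$ follow without further difficulty.
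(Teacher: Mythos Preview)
Your approach via backward parallel transport along the $s$-curves from $s=\infty$ is correct and is precisely the construction carried out in the cited reference; the paper itself does not give a proof here but simply refers to \cite[Theorem 3.16]{tao:heatwave2}. So at the level of strategy there is nothing to compare: you have reconstructed the intended argument.

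One quantitative slip is worth flagging. The decay you state, $|\partial_s^j(\cdot)| \lesssim \langle s\rangle^{-j}$, is not strong enough to justify the very next sentence: with $j=1$ this gives only $|\partial_s\phi| \lesssim \langle s\rangle^{-1}$, which is not integrable on $[0,\infty)$, so the parallel-transport ODE cannot be solved from $s=\infty$ on that basis alone. What one actually obtains from the parabolic bootstrap (using that the target has nonpositive curvature and the limit is a constant) is decay of the form $s^{-N}$ for every $N$ once $s$ is large, for $\partial_s\phi$ and all of its derivatives; this is what makes $\int_0^\infty |\partial_s\phi|\,ds$ finite and Schwartz in $x$, and what allows the Gr\"onwall argument and the differentiation-in-parameters step to go through. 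Apart from this imprecision in the stated exponent, your outline is sound.
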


\begin{proof} See \cite[Theorem 3.16]{tao:heatwave2}.
\end{proof}

In the caloric gauge we have the derivative fields $\psi_t, \psi_x, \psi_s$ and the connection fields $A_t$, $A_x$ (recall in the caloric gauge that the field $A_s$ is trivial).  It will be convenient to introduce the combined vector or tensor fields
\begin{align*}
\Psi_x &:= (\psi_x, A_x) \\
\psi_{t,x} &:= (\psi_t, \psi_x) \\
A_{t,x} &:= (A_t, A_x) \\
\Psi_{t,x} &:= (\psi_{t,x}, A_{t,x}) \\
\Psi_{s,t,x} &:= (\psi_s, \psi_{t,x}, A_{t,x}).
\end{align*}
We refer to $\Psi_{s,t,x}$ as the \emph{differentiated fields} of $\phi$ in the caloric gauge associated with $e(\infty)$.  We also introduce the \emph{wave-tension field} $w := D^\alpha \psi_\alpha$; the wave map equation \eqref{cov} is then equivalent to the vanishing of this quantity on the $s=0$ boundary of $\R^+ \times I \times \R^2$.  We recall the basic equations of motion for these fields:

\begin{proposition}[Equations of motion]\label{abound}  Let $I$ be an interval, let $\phi \in \WM(I,E)$ for some $E > 0$, let $e$ be a caloric gauge for $\phi$, and let $\phi_t, \psi_x, \psi_s, A_t, A_x, w$ be the differentiated fields, connection fields, and wave-tension field.  Then we have the equations of motion
\begin{align}
A_{t,x}(s,t,x) &= \int_s^\infty \psi_s \wedge \psi_{t,x}(s',t,x)\ ds' \label{a-eq}\\
\psi_{t,x}(s,t,x) &= -\int_s^\infty D_{t,x} \psi_s(s',t,x)\ ds' \label{psi-eq}\\
\partial_s \psi_s &= D_i D_i \psi_s - (\psi_s \wedge \psi_i) \psi_i \label{psis-eq}\\
\partial_s \psi_{t,x} &= D_i D_i \psi_{t,x} - (\psi_{t,x} \wedge \psi_i) \psi_i \label{psit-eq}\\
\partial_s w &= D_i D_i w - (w \wedge \psi_i) \psi_i + 2 (\psi_\alpha \wedge \psi_i) D_i \psi^\alpha	 \label{w-eq} \\
D^\alpha D_\alpha \psi_s &= \partial_s w - (\psi_\alpha \wedge \psi_s) \psi^\alpha.   \label{psis-box}
\end{align}
\end{proposition}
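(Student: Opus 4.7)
The plan is to derive all six identities directly from three foundational relations satisfied in the caloric gauge. First, the gauge condition \eqref{ass} gives $D_s = \partial_s$, so there is no frame-rotation term obstructing differentiation in $s$. Second, the zero-torsion identity \eqref{zerotor-frame}, applied with one index equal to $s$, gives $\partial_s \psi_\alpha = D_\alpha \psi_s$. Third, the constant negative curvature identity \eqref{curv-frame}, applied with one index equal to $s$, gives $[\partial_s, D_\alpha] = \partial_s A_\alpha = -\psi_s \wedge \psi_\alpha$. All six equations will then follow by combining these three ingredients with the harmonic map heat flow \eqref{heatflow}.

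For \eqref{a-eq} and \eqref{psi-eq} I would integrate these last two relations in $s$ from $s$ to $+\infty$. The boundary terms at infinity vanish: by Theorem \ref{dynamic-caloric}, the frame converges to a constant frame $e(\infty)$ while all derivatives of $\phi - \phi(\infty)$ remain Schwartz in $x$, so both $\psi_\alpha(s)$ and $A_\alpha(s)$ tend to zero as $s \to \infty$ (in fact with ample integrability in $s$ for bounded $t$). The fundamental theorem of calculus then delivers $A_\alpha(s) = \int_s^\infty \psi_s \wedge \psi_\alpha \, ds'$ and $\psi_\alpha(s) = -\int_s^\infty D_\alpha \psi_s \, ds'$.

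For the heat equations \eqref{psis-eq} and \eqref{psit-eq}, I would differentiate \eqref{heatflow} in $s$ to obtain $\partial_s \psi_s = D^i \partial_s \psi_i + [\partial_s, D^i] \psi_i = D^i D_i \psi_s - (\psi_s \wedge \psi_i) \psi^i$, applying zero torsion inside the first term and the curvature commutator to the second. For \eqref{psit-eq} with a spacetime index $\mu$, I would start from $\partial_s \psi_\mu = D_\mu \psi_s = D_\mu(D^i \psi_i)$, commute $D_\mu$ past $D^i$ via \eqref{curv-frame}, and then apply zero torsion $D_\mu \psi_i = D_i \psi_\mu$ to rearrange into $D^i D_i \psi_\mu - (\psi_\mu \wedge \psi_i) \psi^i$. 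Equation \eqref{psis-box} is obtained similarly: differentiate $w = D^\alpha \psi_\alpha$ in $s$, use $\partial_s \psi_\alpha = D_\alpha \psi_s$ and $[\partial_s, D^\alpha] = -\psi_s \wedge \psi^\alpha$, and rearrange.

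The one equation requiring real bookkeeping is \eqref{w-eq}. I would begin from $\partial_s w = D^\alpha \partial_s \psi_\alpha - (\psi_s \wedge \psi^\alpha) \psi_\alpha$ and substitute \eqref{psit-eq} for $\partial_s \psi_\alpha$. The resulting triple-derivative term $D^\alpha D^i D_i \psi_\alpha$ must be converted into $D^i D_i w$ by two rounds of commutation (using \eqref{curv-frame}) together with two applications of zero torsion. The main obstacle is tracking how the various quadratic-in-$\psi$ error terms --- those produced by the two commutators, those produced by the Leibniz expansion of $D^\alpha((\psi_\alpha \wedge \psi_i) \psi^i)$, and the leftover $(\psi_s \wedge \psi^\alpha) \psi_\alpha$ --- recombine. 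I expect the purely algebraic pieces to collapse into $-(w \wedge \psi_i) \psi^i$ via $w = D^\alpha \psi_\alpha$, while two matching contributions from the commutators assemble into the factor $2(\psi_\alpha \wedge \psi_i) D_i \psi^\alpha$. This is a routine but intricate calculation of the same character as identities derived in \cite{tao:heatwave2}, and is the only place where I anticipate genuine difficulty.
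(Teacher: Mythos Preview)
Your derivation is correct and is in fact more explicit than what the paper does: the paper's entire proof of this proposition is the single line ``See \cite[Proposition 5.4]{tao:heatwave3}''. The argument you outline --- using $A_s=0$ to get $D_s=\partial_s$, the zero-torsion relation $\partial_s\psi_\alpha = D_\alpha\psi_s$, the curvature commutator $\partial_s A_\alpha = -\psi_s\wedge\psi_\alpha$, and then integrating in $s$ or commuting covariant derivatives as needed --- is exactly the standard route by which such equations of motion are derived in the caloric-gauge literature (including in the cited reference), so there is no discrepancy in method, only in the level of detail presented here. Your anticipated difficulty with \eqref{w-eq} is accurate: that identity does require two commutations and careful bookkeeping of the $\psi\wedge\psi$ terms, but it is indeed purely algebraic and goes through as you describe.
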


\begin{proof} See \cite[Proposition 5.4]{tao:heatwave3}.
\end{proof}

We recall some useful estimates in the caloric gauge:

\begin{proposition}\label{corbound}  With the assumptions and notation in Proposition \ref{abound}, we have
\begin{align}
\int_0^\infty s^{k-1} \| \partial_x^k \psi_{t,x} \|_{L^2_x(\R^2)}^2 ds &\lesssim_{E,k} 1 \label{l2-integ-ord} \\
\sup_{s > 0} s^{(k-1)/2} \| \partial_x^{k-1} \psi_{t,x} \|_{L^2_x(\R^2)} &\lesssim_{E,k} 1 
\label{l2-const-ord} \\
\int_0^\infty s^{k-1} \| \partial_x^{k-1} \psi_{t,x} \|_{L^\infty_x(\R^2)}^2\ ds &\lesssim_{E,k} 1\label{linfty-integ-ord} \\
\sup_{s > 0} s^{k/2} \| \partial_x^{k-1} \psi_{t,x} \|_{L^\infty_x(\R^2)} &\lesssim_{E,k} 1 
\label{linfty-const-ord}
\end{align}
for all $k \geq 1$.  Similar estimates hold if one replaces $\psi_{t,x}$ with $A_x$, $\partial_x \psi_{t,x}$ with $\psi_s$, $\partial_x^2$ with $\partial_s$, and/or $\partial_x$ with $D_x$.  
\end{proposition}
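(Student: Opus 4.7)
The plan is a standard parabolic smoothing bootstrap for the covariant heat equation, run simultaneously for the three families of differentiated fields $\psi_{t,x}$, $\psi_s$, $A_x$ and inductively on the order $k$. The starting data is the conservation of energy: at $s=0$ the field $\psi_{t,x}(0,t,\cdot)$ is, up to the caloric frame, just $\partial_{t,x}\phi(t,\cdot)$, so $\|\psi_{t,x}(0,t,\cdot)\|_{L^2_x(\R^2)}^2 \lesssim_E 1$ uniformly in $t \in I$. Heat-flow monotonicity will then propagate this bound to all $s>0$, and parabolic smoothing gains one power of $s^{-1/2}$ per spatial derivative, exactly matching the scaling in \eqref{l2-integ-ord}--\eqref{linfty-const-ord}.

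First I would handle the $k=1$ case. Testing \eqref{psit-eq} against $\psi_{t,x}$ and integrating in $x$, the covariant Laplacian contributes $-\|D_x\psi_{t,x}\|_{L^2_x}^2$ via integration by parts (the connection $A_i$ is skew-adjoint), and the wedge nonlinearity satisfies the algebraic identity
$$ \langle \psi_{t,x},(\psi_{t,x}\wedge\psi_i)\psi_i\rangle = |\psi_{t,x}|^2|\psi_i|^2 - |\langle \psi_{t,x},\psi_i\rangle|^2 \geq 0,$$
so it contributes with a favourable sign. Consequently
$$ \partial_s \|\psi_{t,x}\|_{L^2_x}^2 + 2\|D_x\psi_{t,x}\|_{L^2_x}^2 \leq 0,$$
and integrating in $s$ from $0$ to $\infty$ gives both $\sup_{s}\|\psi_{t,x}\|_{L^2_x}^2 \lesssim_E 1$ and $\int_0^\infty\|D_x\psi_{t,x}\|_{L^2_x}^2\,ds \lesssim_E 1$. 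The same computation applied to \eqref{psis-eq} (with $\psi_s|_{s=0}$ equal to the heat-flow tension $D_i\partial_i\phi$, which at a wave map is controlled by energy) handles $\psi_s$; for $A_x$ one uses the pointwise representation \eqref{a-eq} together with Cauchy--Schwarz in $s$ to trade the $s$-integration for the $\psi_s,\psi_{t,x}$ bounds just obtained.

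For $k \geq 2$ I would differentiate the equation \eqref{psit-eq} $(k-1)$ times in $x$, multiply by $s^{k-1}\partial_x^{k-1}\psi_{t,x}$, and integrate in $x$. After absorbing the $\partial_s$ into a total derivative this produces the weighted identity
$$ \partial_s\bigl(s^{k-1}\|\partial_x^{k-1}\psi_{t,x}\|_{L^2_x}^2\bigr) + 2s^{k-1}\|\partial_x^k\psi_{t,x}\|_{L^2_x}^2 \leq (k-1)s^{k-2}\|\partial_x^{k-1}\psi_{t,x}\|_{L^2_x}^2 + \mathcal{N}_k,$$
where $\mathcal{N}_k$ is a sum of products in which at most $k-1$ derivatives fall on any single factor of $\psi_{t,x}, \psi_s$, or $A_x$. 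Each such product is estimated by putting the highest-order factor in $L^2_x$ and the remaining factors in $L^\infty_x$, using the two-dimensional Sobolev interpolation $\|f\|_{L^\infty(\R^2)}^2 \lesssim \|f\|_{L^2}\|\partial_x^2 f\|_{L^2} + \|\partial_x f\|_{L^2}^2$ to recycle the inductive $L^2$-type hypotheses into $L^\infty$ control. The boundary term at $s=0$ is harmless because $\psi_{t,x}(0,t,\cdot)$ is Schwartz in $x$ uniformly on compact $t$-ranges (Theorem \ref{dynamic-caloric}), and integration in $s \in [0,\infty)$ then yields \eqref{l2-const-ord} and \eqref{l2-integ-ord} at order $k$ simultaneously. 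The $L^\infty$ estimates \eqref{linfty-integ-ord}--\eqref{linfty-const-ord} follow by applying the same Sobolev interpolation to $\partial_x^{k-1}\psi_{t,x}$ and using \eqref{l2-integ-ord}, \eqref{l2-const-ord} at orders $k-1,k,k+1$.

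The main obstacle is the coupling between the three field types: $D_x$ differs from $\partial_x$ by $A_x$, and $A_x$ in turn is defined in terms of $\psi_s,\psi_{t,x}$ via \eqref{a-eq}, while $\psi_s$ is related to $D_x\psi_x$ via \eqref{heatflow}. This circularity is broken by carrying out the induction on $k$ for the triple $(\psi_{t,x},\psi_s,A_x)$ simultaneously, using \eqref{a-eq} to reduce each order-$k$ bound on $A_x$ (and its $x$-derivatives) to a pointwise-quadratic integral in $s$ of already-controlled $\psi_s, \psi_{t,x}$ quantities, and using \eqref{heatflow} together with the parabolic equation \eqref{psis-eq} to do the same for $\psi_s$. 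Once $A_x$ is controlled at the appropriate order, the conversion $D_x = \partial_x + A_x$ transfers any estimate involving $D_x$ to the corresponding estimate for $\partial_x$, closing the induction and yielding all four families of bounds with constants depending only on $E$ and $k$.
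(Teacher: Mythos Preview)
The paper does not actually prove this proposition here; it simply cites \cite[Corollary 4.6, Lemma 4.8, Proposition 4.3]{tao:heatwave2}. Your proposal is the standard parabolic-bootstrap argument that those references carry out, so in broad outline you are doing the same thing as the paper (by proxy).

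There is one genuine slip in your base case. You write that $\psi_s|_{s=0}$, being the heat-flow tension $D_i\psi_i(0)$, ``at a wave map is controlled by energy'', and then propose to run the energy identity for \eqref{psis-eq} directly from $s=0$. This is not right: $\psi_s(0)=D_i\psi_i(0)$ is one derivative above energy regularity, and the wave map equation $D^\alpha\psi_\alpha(0)=0$ only converts this into $D_t\psi_t(0)$, which is equally uncontrolled by $E$. If you test \eqref{psis-eq} against $\psi_s$ with no weight, the boundary term $\|\psi_s(0)\|_{L^2}^2$ is exactly what you cannot afford.

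The fix is already implicit in the rest of your scheme. The base $\psi_s$ bound $\int_0^\infty\|\psi_s\|_{L^2_x}^2\,ds\lesssim_E 1$ comes not from \eqref{psis-eq} but from the energy identity for $\psi_x$ (your first computation), since $\psi_s=D_i\psi_i$ and the dissipation term there is precisely $2\|D_x\psi_x\|_{L^2}^2\geq\|\psi_s\|_{L^2}^2$. All higher-order $\psi_s$ estimates then follow from the \emph{weighted} energy method you describe for $k\geq 2$: multiply \eqref{psis-eq} by $s^{k-1}\partial_x^{k-1}\psi_s$, and the weight $s^{k-1}$ kills the boundary contribution at $s=0$ for every $k\geq 1$ (with the convention that $\psi_s$ sits at the level of $\partial_x\psi_{t,x}$, so the first nontrivial weighted estimate for $\psi_s$ uses weight $s^1$). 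Once you reorganise the induction so that the unweighted $\psi_s$ input is the integrated-in-$s$ bound rather than a pointwise $s=0$ bound, the argument closes exactly as you describe.
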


\begin{proof} See \cite[Corollary 4.6]{tao:heatwave2}, \cite[Lemma 4.8]{tao:heatwave2}, and \cite[Proposition 4.3]{tao:heatwave2}.
\end{proof}

We also recall some useful estimates for the covariant heat equation:

\begin{lemma}[Covariant parabolic regularity]\label{uheat-est}   Let the assumptions and notation be as in Proposition \ref{abound}.  Let $u(0): \R^2 \to \R^m$ be smooth and compactly supported.  Then there exists a unique smooth solution $u: \R^+ \times \R^2 \to \R^m$ to the covariant heat equation
\begin{equation}\label{uheat}
\partial_s u = D_i D_i u - (u \wedge \psi_i) \psi_i
\end{equation}
with initial data $u(0)$ such that $u(s)$ is Schwartz for each $s$.  Furthermore we have the pointwise estimate
\begin{equation}\label{upoint}
|u(s)| \leq e^{s\Delta} |u(0)|
\end{equation}
the energy inequality
\begin{equation}\label{energy-ineq}
\partial_s \int_{\R^2} |u(s,x)|^2\ dx \leq - 2 \int_{\R^2} |D_x u(s,x)|^2\ dx
\end{equation}
and the parabolic estimates
\begin{align}
\sup_{s > 0} s^{k/2} \| \partial_x^k u(s) \|_{L^2_x(\R^2)} \lesssim_{E,k} \|u(0)\|_{L^2_x(\R^2)} \label{u-l2-fixed}\\
\sup_{s > 0} s^{(k+1)/2} \| \partial_x^k u(s) \|_{L^\infty_x(\R^2)} \lesssim_{E,k} \|u(0)\|_{L^2_x(\R^2)} \label{u-linfty-fixed}\\
\int_0^\infty s^k \| \partial_x^{k+1} u(s) \|_{L^2_x(\R^2)}^2\ ds \lesssim_{E,k} \|u(0)\|_{L^2_x(\R^2)}^2 \label{u-l2-integ}\\
\int_0^\infty s^k \| \partial_x^k u(s) \|_{L^\infty_x(\R^2)}^2\ ds \lesssim_{E,k} \|u(0)\|_{L^2_x(\R^2)}^2 \label{u-linfty-integ}
\end{align}
for all $k \geq 0$.  We also have the variant estimate
\begin{equation}\label{ulp}
\int_0^\infty s^{-2/p} \| u(s) \|_{L^p_x(\R^2)}^2\ ds \lesssim_p \|u(0)\|_{L^2_x(\R^2)}^2 
\end{equation}
for all $2 < p \leq \infty$, and the mass diffusion identity
\begin{equation}\label{udie}
\partial_s |u|^2 = \Delta |u|^2 - 2 |D_x u|^2 - |u \wedge \psi_x|^2.
\end{equation}
\end{lemma}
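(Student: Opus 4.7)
The crux of the lemma is the mass diffusion identity \eqref{udie}, from which essentially everything else cascades. I would derive it by a direct covariant computation. Since $A_i \in \mathfrak{so}(m)$ is skew-adjoint, the Euclidean inner product is compatible with the covariant derivative in the sense that $\partial_i \langle v, w\rangle = \langle D_i v, w\rangle + \langle v, D_i w\rangle$; taking $v = w = u$ gives $\partial_i |u|^2 = 2\langle u, D_i u\rangle$, and a second differentiation gives $\tfrac12 \Delta |u|^2 = |D_x u|^2 + \langle u, D_i D_i u\rangle$. On the other hand, expanding $(u \wedge \psi_i)\psi_i = |\psi_i|^2 u - (u \cdot \psi_i)\psi_i$ produces $\langle u, (u \wedge \psi_i)\psi_i\rangle = |u|^2 |\psi_i|^2 - (u \cdot \psi_i)^2 = \tfrac12 |u \wedge \psi_i|^2$. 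Combining these with $\partial_s |u|^2 = 2\langle u, D_i D_i u\rangle - 2\langle u, (u \wedge \psi_i)\psi_i\rangle$ from \eqref{uheat} yields \eqref{udie}.

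Several claims follow immediately from \eqref{udie}. The energy inequality \eqref{energy-ineq} is obtained by integrating in $x$ and discarding the non-negative $|u \wedge \psi_x|^2$ term. For the pointwise bound \eqref{upoint}, the covariant Kato inequality $|\nabla |u||^2 \leq |D_x u|^2$ (a one-line consequence of $\partial_i |u|^2 = 2\langle u, D_i u\rangle$ and Cauchy--Schwarz) lets me rewrite \eqref{udie} as $\partial_s |u| \leq \Delta |u|$ in the weak sense (using the standard $(|u|^2 + \epsilon)^{1/2}$ regularisation on the zero set of $u$); the parabolic maximum principle for the flat heat equation then gives $|u(s)| \leq e^{s\Delta}|u(0)|$. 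For existence and uniqueness I would rewrite \eqref{uheat} as $\partial_s u - \Delta u = 2 A_i \partial_i u + V u$, where $V$ is a smooth potential assembled from $\partial_i A_i$, $A_i^2$, and the quadratic $\psi$-term; the Schwartz-class coefficient bounds of Proposition \ref{corbound} give local Picard iteration on the Duhamel form $u(s) = e^{s\Delta} u(0) + \int_0^s e^{(s-s')\Delta}(\text{lower order})\, ds'$, and the $L^2$ bound from \eqref{energy-ineq} rules out blow-up, extending the solution to all $s > 0$ in the Schwartz class. The $L^p$ estimate \eqref{ulp} reduces via \eqref{upoint} to the scalar identity $\int_0^\infty s^{-2/p} \|e^{s\Delta} f\|_{L^p_x}^2 \, ds \lesssim_p \|f\|_{L^2_x}^2$ for $2 < p \leq \infty$, itself a short Plancherel computation after Littlewood--Paley decomposition of $f$ (the condition $p > 2$ enters precisely because the corresponding integral at frequency scale $N$ produces a factor of $\int_0^\infty r^{-2/p} e^{-2r}\, dr$, which is finite only then).

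The higher-order parabolic estimates \eqref{u-l2-fixed}--\eqref{u-linfty-integ} I would prove by induction on $k$, the $k = 0$ $L^2$ cases being exactly \eqref{energy-ineq}. At each stage one applies $D_x^k$ to \eqref{uheat}, uses the curvature relation \eqref{curv-frame} to push commutators through the covariant derivatives, and obtains an equation of the form $(\partial_s - D_i D_i)(D_x^k u) = F_k$, where $F_k$ is a polynomial in $D_x^j u$ ($j < k$), in $\psi_x$ and $A_x$, and in their spatial derivatives. A weighted $L^2$ energy estimate at weight $s^{k/2}$ then closes the induction, each factor of a derivative field in $F_k$ being absorbed by the appropriate entry of Proposition \ref{corbound}; the $L^\infty_x$ statements follow from the $L^2_x$ ones by restarting the parabolic flow at time $s/2$ and applying \eqref{upoint} together with the scalar $L^2 \to L^\infty$ heat kernel bound. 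The main obstacle here is purely organisational: the commutator and curvature corrections generate a large zoo of error terms, but the scale-invariance of the $s^{k/2}$ weights and the sharpness of the bounds in Proposition \ref{corbound} are tailored precisely so that every such error can be absorbed without loss.
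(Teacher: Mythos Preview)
The paper does not actually prove this lemma here; it simply cites \cite[Lemma 4.8]{tao:heatwave2}. Your sketch is correct and follows the standard route one would expect in that reference: derive the mass diffusion identity \eqref{udie} from the skew-adjointness of $A_i$, read off \eqref{energy-ineq} and the Kato subsolution inequality (hence \eqref{upoint}) from it, get existence by Picard iteration on the Duhamel form and global continuation from the $L^2$ bound, reduce \eqref{ulp} to the scalar heat flow via \eqref{upoint}, and obtain \eqref{u-l2-fixed}--\eqref{u-linfty-integ} by a weighted energy induction on $k$ using the commutator/curvature bookkeeping and Proposition \ref{corbound}. There is nothing to compare against in the present paper, and no gap in your outline.
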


\begin{proof} See \cite[Lemma 4.8]{tao:heatwave2}.
\end{proof}

\subsection{Working in the caloric gauge}

Let $\phi \in \WM(I,E)$ be a classical wave map on a compact time interval $I$, quotiented out by Lorentz rotations $SO(m,1)$.  By Theorem \ref{dynamic-caloric}, given any such wave map, and given any frame $e(\infty)$ at $e(\infty) \in \Frame(T_{\phi(\infty)}\H)$, one can construct a caloric gauge for $\phi$, which then generates the differentiated fields $\Psi_{s,t,x}$.  We let $\WMC(\phi,I)$ denote the collection of all the possible fields $\Psi_{s,t,x}$ that can arise for a fixed $\phi$ by choosing $e(\infty)$, and let $\WMC(I,E) = \bigcup_{\phi \in \WM(I,E)} \WMC(\phi,I)$ denote all the differentiated fields that can arise from some classical wave map $\phi \in \WM(I,E)$.  

Observe that if $U \in SO(m)$ is a rotation, then replacing $e(\infty)$ by $e(\infty) \circ U^{-1}$ corresponds to rotating the fields $\psi_{t,x}$ to $U \psi_{t,x}$, and similarly rotating $A_{t,x}$ to $U A_{t,x} U^{-1}$.  This gives an action of $SO(m)$ on $\WMC(I,E)$, whose orbits are precisely the sets $\WMC(\phi,I)$.  Thus each classical wave map $\phi \in \WM(I,E)$ gives rise to an $SO(m)$-orbit $\WMC(\phi,I)$ of differentiated fields.  Conversely, it is not hard to see (from the Picard uniqueness theorem) that each differentiated field $\Psi_{s,t,x} \in \WMC(I,E)$ arises from exactly one wave map $\phi \in \WM(I,E)$ (quotienting out by $SO(m,1)$ as usual).  So we have an identification 
$$\WM(I,E) \equiv SO(m)\backslash \WMC(I,E).$$

We can play similar games with the initial data space $\S$: if $\Phi = (\phi_0,\phi_1) \in \S$ is classical initial data, we can construct associated caloric gauges $\Psi_{s,t,x}$ on $\R^+ \times \R^2$ for each choice of frame $\e(\infty) \in \Frame(T_{\phi_0(\infty)} \H)$, for instance by extending $\Phi$ for a short amount of time and then using Theorem \ref{dynamic-caloric}, or by using \cite[Theorem 3.12]{tao:heatwave2}, \cite[Lemma 4.8]{tao:heatwave2}.  We let $\SC(\Phi)$ denote the collection of all such differentiated fields for a fixed $\Phi$, and $\SC := \bigcup_{\Phi \in \S} \SC(\Phi)$ denote the total collection of such fields. Once again, the orthogonal group $SO(m)$ acts on $\SC$, with orbits $\SC(\Phi)$, with the orbit determining $\Phi$ up to the action of $SO(m,1)$, so that we have an identification 
$$SO(m,1) \backslash \S \equiv SO(m) \backslash \SC.$$
Also observe that if $\Psi_{s,t,x} \in \WMC(\phi,I)$ for some $\phi \in \WM(I,E)$, then $\Psi_{s,t,x}(t_0) \in \SC(\phi[t_0])$ for all $t_0 \in I$.  Finally, the constant data $\const \in \S$ corresponds to the zero differentiated field $0 \in \SC$, thus $\SC(\const) = \{0\}$ (and similarly $\WMC(\const) = \{0\}$.

\subsection{Construction of the energy space}\label{energydef-sec}

We introduce the Littlewood-Paley space
\begin{equation}\label{ldef}
 {\mathcal L} := L^2( \R^+ \times \R^2 \to \R^m, dx ds ) \times L^2( \R^2 \to \R^m, \frac{1}{2} dx ).
\end{equation}
We can define an energy metric $d_{\Energy}$ on $\SC$ by the formula
\begin{equation}\label{psipsi}
\begin{split}
d_\Energy( \Psi_{s,t,x}, \Psi'_{s,t,x} ) &:= \| ( \psi_s - \psi'_s, \psi_t(0) - \psi'_t(0) ) \|_{\mathcal L} \\
&= \left(\int_0^\infty \| \delta \psi_s(s) \|_{L^2_x(\R^2)}^2\ ds + \frac{1}{2} \| \delta \psi_t(0) \|_{L^2_x(\R^2)}^2\right)^{1/2}.
\end{split}
\end{equation}
We also abbreviate $d_{\Energy}(\Psi_{s,t,x},0)^2$ as $\E(\Psi_{s,t,x})$, and refer to this as the \emph{energy} of the differentiated field $\Psi_{s,t,x}$.

Observe that this metric is preserved by the action of $SO(m)$.  It thus descends to a metric on $SO(m,1) \backslash \S \equiv SO(m) \backslash \SC$ in the usual manner; by abuse of notation we also denote this metric as $d_\Energy$, thus
$$
d_\Energy(\Phi, \Phi') := \inf_{\Psi_{s,t,x} \in \SC(\Phi)} \inf_{\Psi'_{s,t,x} \in \SC(\Phi')}
d_\Energy(\Psi_{s,t,x}, \Psi'_{s,t,x} ).
$$
We define the energy space\footnote{This definition is equivalent to that in \cite{tao:heatwave2}, though arranged slightly differently.} $\Energy$ to be the completion of $SO(m,1) \backslash \S$ using the metric $\Energy$, and $\iota$ to be the quotient map from $\S$ to $SO(m,1) \backslash \S$.    With these definitions, Theorem \ref{energy-claim} was established in \cite{tao:heatwave2}.  Observe from Theorem \ref{energy-claim}(iv) that the energy of a map $\Phi \in \S$ is equal to the energy of any of its differentiated fields $\Psi_{s,t,x} \in \SC(\Phi)$.

\subsection{The energy spectral distribution}\label{esd-sec}

Let $\Phi \in \S$ and $\Psi_{s,t,x} \in \SC(\Phi)$.  In \cite[Lemma 5.3]{tao:heatwave2} the energy identity
$$
\E(\Phi) = \int_0^\infty \| \psi_s(s) \|_{L^2_x(\R^2)}^2\ ds + \frac{1}{2} \| \psi_t(0) \|_{L^2_x(\R^2)}^2
$$
was established.  From Lemma \ref{uheat-est}, we see that $\psi_t(0)$ can be extended to a smooth map $\psi_t: \R^+ \times \R^2 \to \R^m$ obeying the covariant heat equation
\begin{equation}\label{covheat}
 \partial_s \psi_t = D_i D_i \psi_t - (\psi_t \wedge \psi_i) \psi_i 
\end{equation}
(i.e. the time component of \eqref{psit-eq})
obeying the pointwise bound
$$ |\psi_t(s)| \leq e^{s\Delta} |\psi_t(0)|$$
so in particular $\| \psi_t(s) \|_{L^2_x(\R^2)} \to 0$ as $s \to \infty$; from Lemma \ref{uheat-est} we also have the energy identity
\begin{equation}\label{psits}
 \partial_s \| \psi_t(s) \|_{L^2_x(\R^2)}^2 = - \int_{\R^2} 2 |D_x \psi_t|^2 + |\psi_t \wedge \psi_x|^2\ dx.
 \end{equation}
Putting this all together, we see that if we define
\begin{equation}\label{esd-def}
\ESD(\Phi)(s) := \| \psi_s(s) \|_{L^2_x(\R^2)}^2 + \| D_x \psi_t(s) \|_{L^2_x(\R^2)}^2 + \frac{1}{2} \| \psi_t \wedge \psi_x(s) \|_{L^2_x(\R^2)}^2
\end{equation}
then we have \eqref{energy-ident}; note that this quantity is $SO(m)$-invariant and thus independent of the choice of $\Psi_{s,t,x}$.  Also, it is clear that $\ESD(\Phi)$ is continuous in $s$.

It is not difficult to verify the symmetries \eqref{spacetrans-esd}-\eqref{dil-esd}.  To complete the proof of Proposition \ref{esd-prop}, it remains to verify the convergence property (iii).  Let $\Phi^{(n)}$ be a sequence in $\S$ with $\iota(\Phi^{(n)})$ Cauchy in $\Energy$, then by construction of $\Energy$, we can find $\Psi^{(n)}_{s,t,x} \in \SC(\Phi^{(n)})$ which is Cauchy in ${\mathcal L}$, thus
$$ \int_0^\infty \|\psi^{(n)-(n')}_s(s) \|_{L^2_x(\R^2)}^2\ ds = o_{n,n' \to \infty}(1)$$
and
\begin{equation}\label{siren}
 \| \psi^{(n)-(n')}_t(0)  \|_{L^2_x(\R^2)}^2 = o_{n,n' \to \infty}(1),
\end{equation}
where we write $\psi^{(n)-(n')}_s$ for $\psi^{(n)}_s - \psi^{(n')}_s$, etc., and $o_{n,n' \to \infty}(1)$ denotes a quantity that goes to zero as $n,n' \to \infty$.  

Fix a compact subset $S \subset (0,+\infty)$.  From the proof of \cite[Proposition 5.5]{tao:heatwave2} we see that $ \|\psi^{(n)-(n')}_s(s) \|_{L^2_x(\R^2)} = o_{n,n' \to \infty}(1)$ uniformly for $s \in S$.  Also, if we define
\begin{equation}\label{hodie}
h(s) := \|\psi^{(n)-(n')}_x\|_{L^\infty_x(\R^2)} + \|A^{(n)-(n')}_x\|_{L^\infty_x(\R^2)}
\end{equation}
then from the proof of \cite[Proposition 5.5]{tao:heatwave2} we have
\begin{equation}\label{sousa}
\sup_{s>0} s^{1/2} h(s) + \int_0^\infty h(s)^2\ ds = o_{n,n' \to \infty}(1).
\end{equation}
In particular $h(s) = o_{n,n' \to \infty}(1)$ uniformly for $s \in S$.  Using these estimates (and noting from \cite[Proposition 4.3, Corollary 4.6]{tao:heatwave2} that $\psi_x(s), A_x(s), \psi_t(s), \partial_x \psi_t(s)$ are bounded in $L^2_x$ and $L^\infty_x$ uniformly in $s \in S$) we see that we will be done as soon as we can show that
$$ \| \psi^{(n)-(n')}_t(s) \|_{L^2_x(\R^2)}, \| \partial_x \psi^{(n)-(n')}_t(s) \|_{L^2_x(\R^2)} = o_{n,n' \to \infty}(1)$$
uniformly in $s$.  From Proposition \ref{corbound} (or Lemma \ref{uheat-est}) we know that $\| \partial_x^2 \psi^{(n)}_t(s) \|_{L^2_x(\R^2)}$ is bounded uniformly in $n$ and for $s \in S$, so it in fact suffices from the Gagliardo-Nirenberg inequality to prove the first bound:
$$ \| \psi^{(n)-(n')}_t(s) \|_{L^2_x(\R^2)} = o_{n,n' \to \infty}(1).$$
Now, by subtracting the covariant heat equations \eqref{covheat} for $\psi^{(n)}_t, \psi^{(n')}_t$ we see that
\begin{equation}\label{psint}
 \partial_s \psi^{(n)-(n')}_t = D^{(n)}_i D^{(n)}_i \psi^{(n)-(n')}_t - (\psi^{(n)-(n')}_t \wedge \psi^{(n)}_i) \psi^{(n)}_i + E 
\end{equation}
where the error term $E$ has the magnitude bound
\begin{align*}
 |E| &\lesssim |\partial_x A^{(n)-(n')}_x| |\psi^{(n)+(n')}_t| + |A^{(n)-(n')}_x| |\partial_x \psi^{(n)+(n')}_t| \\
 &\quad + 
|A^{(n)-(n')}_x| |A^{(n)+(n')}_x| |\psi^{(n)+(n')}_t| + |\psi^{(n)-(n')}_x| |\psi^{(n)+(n')}_x| |\psi^{(n)+(n')}_t|
\end{align*}
where we write $\psi^{(n)+(n')}_t$ for $(\psi^{(n)}_t, \psi^{(n')}_t)$, etc.  Multiplying \eqref{psint} against $\psi^{(n)-(n')}_t$ and integrating, and discarding some negative terms (cf. the proof of \cite[Lemma 4.8]{tao:heatwave2}), we see that
$$ \partial_s  \|\psi^{(n)-(n')}_t \|_{L^2_x(\R^2)}^2 \leq 2 \| \psi^{(n)-(n')}_t \|_{L^2_x(\R^2)} \| E \|_{L^2_x(\R^2)}$$
and thus
$$ \partial_s  \|\psi^{(n)-(n')}_t \|_{L^2_x(\R^2)} \leq \| E \|_{L^2_x(\R^2)}.$$
By \eqref{siren}, we will thus be done as soon as we show
$$ \int_0^\infty \| E(s) \|_{L^2_x(\R^2)}\ ds = o_{n,n' \to \infty}(1).$$
From \eqref{hodie} we have
\begin{align*}
\| E(s) \|_{L^2_x(\R^2)} &\lesssim \| \partial_x A^{(n)-(n')}_x \|_{L^4_x(\R^2)} \| \psi^{(n)+(n')}_t\|_{L^4_x(\R^2)} + h(s) \times \\
&\quad ( \| \partial_x \psi^{(n)+(n')}_t\|_{L^2_x(\R^2)} + \|A^{(n)+(n')}_x\|_{L^\infty_x(\R^2)} \|\psi^{(n)+(n')}_t\|_{L^2_x(\R^2)}\\
&\quad + \|\psi^{(n)+(n')}_x\|_{L^\infty_x(\R^2)} \|\psi^{(n)+(n')}_t\|_{L^2_x(\R^2)} ).
\end{align*}
The terms involving $h(s)$ can be controlled using Cauchy-Schwarz, \eqref{sousa}, and Proposition \ref{corbound}.  As for the first term, observe from Lemma \ref{uheat-est} that
$$ \int_0^\infty s^{-1/2} \| \psi^{(n)+(n')}_t\|_{L^4_x(\R^2)}^2\ ds$$
is bounded uniformly in $n,n'$, so by Cauchy-Schwarz it suffices to show that
$$ \int_0^\infty s^{1/2} \| \partial_x A^{(n)-(n')}_x\|_{L^4_x(\R^2)}^2\ ds = o_{n,n' \to \infty}(1).$$
But from Proposition \ref{corbound} we have
$$ \int_0^\infty s \| \partial^2_x A^{(n)-(n')}_x\|_{L^2_x(\R^2)}^2\ ds $$
bounded uniformly in $n,n'$, so the claim follows from \eqref{sousa}, Cauchy-Schwarz, and the Gagliardo-Nirenberg inequality.  This completes the proof of Proposition \ref{esd-prop}.

\subsection{Compactness}\label{compact-sec}

In this section we prove Proposition \ref{precom}.  Fix $C: \R^+ \to \R^+$ and $E$ as in that proposition.  From Proposition \ref{esd-prop}, it suffices to show that every sequence $\iota(\phi^{(n)}_0, \phi^{(n)}_1)$ in (i) has a subsequence that converges in $\Energy$, as one can then take $K$ to be the set of all limits of such subsequences.  

Fix $\iota(\phi^{(n)}_0, \phi^{(n)}_1)$ as in (i).  We select $\Psi^{(n)}_{s,t,x} \in \SC(\phi^{(n)}_0, \phi^{(n)}_1)$ arbitrarily\footnote{The ambiguity in selecting $\Psi^{(n)}_{s,t,x}$ is described by the compact group $SO(m)$, and is thus irrelevant for the compactness theory.}.  We construct the non-negative measures $\mu^{(n)}$ on $\R^+ \times \R^2$ by the formula
$$ \mu^{(n)} := |\psi^{(n)}_s(s,x)|^2\ dx ds.$$
From \eqref{energy-ident} we see that $\mu^{(n)}$ has total mass at most $E$, thus the measures are uniformly finite in $n$.

\begin{lemma}[Tightness for $\psi^{(n)}_s$]\label{tight}  The $\mu^{(n)}$ form a tight sequence of measures; thus for every $\eps > 0$ there exists a compact subset $K$ of $\R^+ \times \R^2$ such that $\mu^{(n)}( \R^+ \times \R^2 \backslash K ) \leq \eps$ for all sufficiently large $n$.
\end{lemma}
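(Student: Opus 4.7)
The plan is to prove tightness separately in the $s$-direction (using the frequency localisation hypothesis \eqref{esd-n}) and in the $x$-direction (using the spatial localisation \eqref{locale} of $\T_{00}$, transported to positive heat times via the parabolic identity \eqref{udie}).

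The $s$-direction is essentially automatic: since $\ESD(\phi_0,\phi_1)(s) \geq \|\psi_s(s)\|_{L^2_x(\R^2)}^2$ by \eqref{esd-def}, the hypothesis \eqref{esd-n} immediately yields
$$
\int_{s \notin [1/C(\eps),\, C(\eps)]} \int_{\R^2} |\psi_s^{(n)}(s,x)|^2\, dx\, ds \leq \eps
$$
for all sufficiently large $n$, so $\mu^{(n)}$ loses at most $\eps$ of its mass outside the slab $s \in [1/C(\eps),C(\eps)]$.

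For the $x$-direction on this slab, the key observation I would exploit is the pointwise bound
$$
|\psi_s|^2 = \Big|\sum_i D_i \psi_i\Big|^2 \leq 2|D_x \psi_x|^2
$$
(Cauchy-Schwarz on the index $i$), which reduces matters to a localised energy bound for $|D_x \psi_x|^2$. Summing the instance of \eqref{udie} for each component $\psi_j$ (noting that $\psi_j$ satisfies \eqref{psit-eq}, which is \eqref{uheat}) produces the pointwise identity
$$
\partial_s |\psi_x|^2 = \Delta |\psi_x|^2 - 2|D_x \psi_x|^2 - \sum_j |\psi_j \wedge \psi_x|^2.
$$
The plan is to test this against a smooth cutoff $\eta_R \geq 0$ vanishing on $\{|x| \leq R\}$, equal to $1$ on $\{|x| \geq 2R\}$, with $\|\Delta \eta_R\|_\infty \lesssim R^{-2}$, then integrate by parts in $x$ and in $s$ over $[0,C(\eps)]$. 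Dropping the non-negative terminal term $\int \eta_R |\psi_x(C(\eps))|^2$ and the non-negative curvature contribution $\sum_j|\psi_j\wedge\psi_x|^2$ would give
$$
\int_0^{C(\eps)} \int \eta_R\, |D_x \psi_x|^2\, dx\, ds \lesssim \frac{E \cdot C(\eps)}{R^2} + \int \eta_R\, |\psi_x(0,\cdot)|^2\, dx,
$$
where the first term uses the a priori bound $\sup_s \|\psi_x(s)\|_{L^2_x}^2 \leq 2E$ coming from the same identity (integrating in $x$ shows the heat flow is monotone non-increasing in $\|\psi_x\|_{L^2_x}^2$, with initial value $\leq 2E$).

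To close the argument, the second term is controlled by the pointwise inequality $|\psi_x(0,x)|^2 \leq 2\T_{00}(\phi_0^{(n)},\phi_1^{(n)})(x)$ together with the fact that $\eta_R$ is supported in $\{|x|\geq R\}$; the spatial localisation hypothesis \eqref{locale} then delivers $\int \eta_R |\psi_x^{(n)}(0,\cdot)|^2\, dx \leq 2\eps$ for all sufficiently large $n$. Combining with the $s$-tail bound yields $\mu^{(n)}(\R^+\times\R^2 \setminus K) \lesssim \eps$ for the compact set $K = [1/C(\eps), C(\eps)] \times \overline{B(0, 2R)}$, once $R$ is chosen large enough in terms of $E$, $\eps$, and $C(\eps)$, proving the desired tightness. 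The main conceptual step is the pointwise dominance $|\psi_s|^2 \leq 2|D_x \psi_x|^2$: it sidesteps the obstacle that $|\psi_s(0,\cdot)|^2 = |D_i \psi_i|^2$ is not directly pointwise-controlled by $\T_{00}$, and instead reduces everything to a clean localised energy identity for $|\psi_x|^2$ whose initial data is dominated by $\T_{00}$.
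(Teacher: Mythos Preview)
Your argument is correct and follows essentially the same route as the paper's proof: both use the mass diffusion identity \eqref{udie} for $|\psi_x|^2$, the pointwise bound $|\psi_s|\lesssim |D_x\psi_x|$, and a spatial cutoff localised away from the origin to convert the initial energy localisation \eqref{locale} into $x$-tightness on the slab $[1/C(\eps),C(\eps)]$. The only cosmetic difference is that the paper works with a compactly supported cutoff $\chi$ to the annulus $R\le |x|\le R'$ and passes to the limit $R'\to\infty$, whereas you take $\eta_R$ equal to $1$ on $\{|x|\ge 2R\}$ from the outset; since the fields are Schwartz in $x$ this causes no difficulty.
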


\begin{proof}  From the hypothesis \eqref{esd-n} and \eqref{esd-def} we see that for every $\eps > 0$ one has
$$ \mu^{(n)}( [0,1/C(\eps)] \times \R^2 ) + \mu^{(n)}( [C(\eps),\infty) \times \R^2 ) \leq \eps$$
for $n$ sufficiently large.  Thus we already have tightness in the $s$ variable.  To obtain tightness in the $x$ variable, we need to exploit \eqref{locale}.  This will require an approximate finite speed of propagation result for the harmonic map heat flow.

From \eqref{psit-eq} and Lemma \ref{uheat-est} we have the mass diffusion identity
$$
\partial_s |\psi^{(n)}_x|^2 = \Delta |\psi^{(n)}_x|^2 - 2 |D_x \psi^{(n)}_x|^2 - |\psi_x \wedge \psi_x|^2.$$
Integrating this against a smooth non-negative compactly supported cutoff function $\chi: \R^2 \to \R^+$ to be chosen later, we obtain
$$
\partial_s \int_{\R^2} |\psi^{(n)}_x(s,x)|^2 \chi(x)\ dx \leq \int_{\R^2} |\psi^{(n)}_x(s,x)|^2 \Delta \chi(x)\ dx - 
2 \int_{\R^2} |D_x \psi^{(n)}_x(s,x)|^2 \chi(x)\ dx.$$
From Proposition \ref{corbound} we know that $\psi^{(n)}_x(s)$ is bounded in $L^2$ by $O_E(1)$.
From the fundamental theorem of calculus, and observing that $|\psi^{(n)}_s| \lesssim |D_x \psi^{(n)}_x|$, 
we conclude that
$$ \int_0^S \int_{\R^2} |\psi^{(n)}_s(s,x)|^2 \chi(x)\ dx ds \lesssim_E \int_{\R^2} |\psi^{(n)}_x(0,x)|^2 \chi(x)\ dx + S \| \Delta \chi \|_{L^\infty(\R^2)} $$
for any $S > 0$.  If we then let $\chi$ be a smooth cutoff to the region $R \leq |x| \leq R'$ that equals $1$ when $2R \leq |x| \leq R'/2$, then lets $R' \to \infty$, we obtain the bound
$$ \int_0^S \int_{|x| \geq 2R} |\psi^{(n)}_s(s,x)|^2\ dx ds \lesssim_E \int_{|x| \geq R} |\psi^{(n)}_x(0,x)|^2\ dx + S / R^2.$$
Applying \eqref{locale}, we see that for every $\eps' > 0$ there exists an $R > 0$ (depending on $C, \eps, \eps'$) such that
$$ \int_{1/C(\eps)}^{C(\eps)} \int_{|x| \geq 2R} |\psi^{(n)}_s(s,x)|^2\ dx ds \lesssim_E \eps'$$
for sufficiently large $n$, and the tightness claim follows.
\end{proof}

From Proposition \ref{corbound}, we see that the $\psi^{(n)}_s$ are equicontinuous on any compact subset of $\R^+ \times \R^2$, and thus by the Arzel\'a-Ascoli theorem one can (after passing to a subsequence) assume that the $\psi^{(n)}_s$ converge locally uniformly on compact sets.  Combining this with Lemma \ref{tight} we conclude that the $\psi^{(n)}_s$ are convergent in $L^2( \R^+ \times \R^2, dx ds)$.

From \eqref{psipsi}, we see that in order to conclude the proof that $\iota(\phi^{(n)}_0, \phi^{(n)}_1)$ is Cauchy in $\Energy$ (after passing to a subsequence), it suffices to show that the $\psi^{(n)}_t(0)$ are convergent in $L^2_x(\R^2)$ (again after passing to a subsequence).

From Proposition \ref{corbound}, we know that the non-negative measures $|\psi^{(n)}_t(s,x)|^2\ dx$ are uniformly finite in $n$ and $s \geq 0$.  For $s>0$ we also have tightness:

\begin{lemma}[Tightness for $\psi^{(n)}_t$]\label{tito}  For any fixed $s > 0$, the measures $|\psi^{(n)}_t(s,x)|^2\ dx$ are a tight sequence of measures in $n$.
\end{lemma}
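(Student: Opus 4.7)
The plan is to exploit the pointwise heat-kernel domination
$|\psi^{(n)}_t(s,x)| \leq (e^{s\Delta}|\psi^{(n)}_t(0,\cdot)|)(x)$
supplied by \eqref{upoint} in Lemma \ref{uheat-est} (applied with $u = \psi^{(n)}_t$, which satisfies \eqref{covheat}, an equation of the form \eqref{uheat}), and then propagate forward the tightness of the initial trace $\psi^{(n)}_t(0)$ using standard Gaussian tail estimates for the Euclidean heat kernel $K_s(x) := (4\pi s)^{-1}\exp(-|x|^2/4s)$.

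First I would translate the spatial concentration hypothesis \eqref{locale} into tightness of $|\psi^{(n)}_t(0,\cdot)|^2\,dx$. Since the caloric gauge frame $e(0,0,\cdot)$ is fibrewise orthonormal between $(T_{\phi^{(n)}_0(x)}\H, h)$ and $\R^m$, one has $|\psi^{(n)}_t(0,x)| = |\phi^{(n)}_1(x)|_{h(\phi^{(n)}_0(x))}$, whence $|\psi^{(n)}_t(0,x)|^2 \leq 2\T_{00}(\phi^{(n)}_0,\phi^{(n)}_1)(x)$. Thus, for every $\eps > 0$, setting $R_0 := C(\eps/2)$, the hypothesis \eqref{locale} yields
$\int_{|x|\geq R_0}|\psi^{(n)}_t(0,x)|^2\,dx \leq \eps$
for all sufficiently large $n$, while $\|\psi^{(n)}_t(0)\|_{L^2_x(\R^2)}^2 \leq 2E$ is uniform in $n$.

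Next, fix $s > 0$ and $\eps > 0$. Writing $g_n := |\psi^{(n)}_t(0,\cdot)|$, I would decompose $g_n = g_n \chi_{|y|\leq R_0} + g_n \chi_{|y|>R_0}$. The $L^2$-contractivity of $e^{s\Delta}$ controls the far piece: $\|e^{s\Delta}(g_n \chi_{|y|>R_0})\|_{L^2_x(\R^2)} \leq \sqrt{\eps}$. For the local piece I would invoke Jensen's inequality applied to the probability density $K_s$, yielding $(e^{s\Delta}f)^2(x) \leq e^{s\Delta}(f^2)(x)$ pointwise. Integrating this bound over $\{|x|>R\}$ and applying Fubini together with the tail estimate $\sup_{|y|\leq R_0}\int_{|x|>R}K_s(x-y)\,dx \leq \int_{|z|>R-R_0}K_s(z)\,dz$ gives, for $R > R_0$,
$$\int_{|x|>R}\bigl|e^{s\Delta}(g_n \chi_{|y|\leq R_0})(x)\bigr|^2\,dx \lesssim_E \int_{|z|>R-R_0}K_s(z)\,dz,$$
which vanishes as $R\to\infty$ for each fixed $s$. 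Combining the two pieces via the pointwise heat-kernel domination yields $\int_{|x|>R}|\psi^{(n)}_t(s,x)|^2\,dx = O(\eps)$ uniformly in $n$ for $R$ sufficiently large (in terms of $s$, $\eps$, $E$), which is the desired tightness.

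The argument poses no substantial obstacle; the covariant nonlinearity in \eqref{covheat} is already absorbed into the pointwise comparison of Lemma \ref{uheat-est} (via the mass diffusion identity \eqref{udie} underlying \eqref{upoint}), and everything downstream is routine heat-semigroup tail analysis. The only subtlety worth flagging is that the tightness constants will degrade as $s \downarrow 0$ (through $K_s$), so one cannot hope for uniformity in $s$; but the lemma only claims tightness for each fixed $s>0$, which is precisely what the above delivers.
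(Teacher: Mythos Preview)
Your argument is correct but proceeds differently from the paper. The paper works directly with the mass diffusion identity \eqref{udie} for $|\psi^{(n)}_t|^2$: integrating it against a spatial cutoff $\chi$ and discarding the nonnegative terms gives $\partial_s \int |\psi^{(n)}_t|^2 \chi \leq \int |\psi^{(n)}_t|^2 \Delta\chi$, and one integration in $s$ together with the uniform $L^2$ bound from Proposition \ref{corbound} yields $\int_{|x|\geq 2R}|\psi^{(n)}_t(s,x)|^2\,dx \lesssim \int_{|x|\geq R}|\psi^{(n)}_t(0,x)|^2\,dx + s/R^2$, from which tightness follows via \eqref{locale}. You instead invoke the already-packaged pointwise consequence \eqref{upoint} of that same identity and push the tightness forward through explicit Gaussian tail bounds for the free heat kernel. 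Both routes rest on the same structural input; the paper's cutoff argument has the advantage of being a near-verbatim repeat of the proof of Lemma \ref{tight} (so it costs almost nothing new) and of producing a clean explicit error term $s/R^2$, while your approach is perhaps conceptually tidier in that it outsources the parabolic propagation entirely to \eqref{upoint} and standard heat-semigroup facts. Your remark about the $s\downarrow 0$ degradation is well taken and matches the $s/R^2$ term in the paper's bound.
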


\begin{proof}  This is another application of approximate finite speed of propagation.  Using \eqref{psit-eq} and Lemma \ref{uheat-est} as before we have
$$ \partial_s |\psi^{(n)}_t|^2 = \Delta |\psi^{(n)}_t|^2 - 2 |D_x \psi^{(n)}_t|^2 - |\psi_t \wedge \psi_x|^2.$$
Integrating this against a cutoff $\chi$ and discarding some negative terms we have
$$
\partial_s \int_{\R^2} |\psi^{(n)}_t(s,x)|^2 \chi(x)\ dx \leq \int_{\R^2} |\psi^{(n)}_t(s,x)|^2 \Delta \chi(x)\ dx,$$
and thus from the fundamental theorem of calculus (and Proposition \ref{corbound})
$$
\int_{\R^2} |\psi^{(n)}_t(s,x)|^2 \chi(x)\ dx \lesssim \int_{\R^2} |\psi^{(n)}_t(0,x)|^2 \chi(x)\ dx + s \| \Delta \chi \|_{L^\infty_x(\R^2)}.$$
Selecting $\chi$ as in the proof of Lemma \ref{tight} and applying the same limiting argument, we conclude that
$$
\int_{|x| \geq 2R} |\psi^{(n)}_t(s,x)|^2\ dx \lesssim \int_{|x| \geq R} |\psi^{(n)}_t(0,x)|^2 \chi(x)\ dx + s / R^2.$$
The tightness then follows from \eqref{locale}.
\end{proof}

By Proposition \ref{corbound}, the $\psi^{(n)}_t(s)$ are equicontinuous on compact subsets of $\R^2$ for any fixed $s$.  Applying the Arzel\'a-Ascoli theorem and Lemma \ref{tito}, we may thus assume (after passing to a subsequence and diagonalising) that the $\psi^{(n)}_t(s)$ converge in $L^2_x(\R^2)$ for any fixed rational $s > 0$.

To pass from $L^2$ convergence for $s > 0$ to $L^2$ convergence for $s=0$ we use

\begin{lemma}[Stability for $\psi^{(n)}$]   For every $\eps > 0$ there exists a rational $s_\eps > 0$ such that $\int_{\R^2} |\psi^{(n)}_t(0,x) - \psi^{(n)}_t(s_\eps,x)|^2\ dx \lesssim_E \eps$ for all sufficiently large $n$.
\end{lemma}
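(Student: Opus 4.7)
The plan is to pick a rational $s_\eps > 0$ with $s_\eps \leq 1/C(\delta)$, where $\delta = \delta(\eps,E)$ is a small parameter comparable to $\eps^2$ (with $E$-dependent constants). By the frequency-localisation hypothesis \eqref{esd-n} combined with the definition \eqref{esd-def} of $\ESD$, this ensures that for all sufficiently large $n$,
\begin{equation*}
\int_0^{s_\eps}\|\psi_s^{(n)}\|_{L^2_x}^2\,ds, \quad \int_0^{s_\eps}\|D_x\psi_t^{(n)}\|_{L^2_x}^2\,ds, \quad \int_0^{s_\eps}\|\psi_t^{(n)} \wedge \psi_x^{(n)}\|_{L^2_x}^2\,ds \;\leq\; \delta,
\end{equation*}
and integrating the energy identity \eqref{psits} from $0$ to $s_\eps$ then yields $\|\psi_t^{(n)}(0)\|_{L^2}^2 - \|\psi_t^{(n)}(s_\eps)\|_{L^2}^2 \leq 2\delta$.

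The Hilbert-space identity
\begin{equation*}
\|\psi_t^{(n)}(0) - \psi_t^{(n)}(s_\eps)\|_{L^2}^2 = \bigl(\|\psi_t^{(n)}(0)\|_{L^2}^2 - \|\psi_t^{(n)}(s_\eps)\|_{L^2}^2\bigr) + 2\bigl\langle\psi_t^{(n)}(s_\eps),\, \psi_t^{(n)}(s_\eps) - \psi_t^{(n)}(0)\bigr\rangle
\end{equation*}
reduces the task to controlling the cross term. Writing $\psi_t^{(n)}(s_\eps) - \psi_t^{(n)}(0) = \int_0^{s_\eps}\partial_s\psi_t^{(n)}(s)\,ds$, invoking the covariant heat equation \eqref{psit-eq}, and integrating by parts in $x$ (using skew-adjointness of $A_i$) so that both covariant derivatives fall onto the $s$-independent smoothed field $\psi_t^{(n)}(s_\eps)$, one finds
\begin{equation*}
\bigl\langle\psi_t^{(n)}(s_\eps),\, \psi_t^{(n)}(s_\eps) - \psi_t^{(n)}(0)\bigr\rangle = -\int_0^{s_\eps}\bigl\langle (\partial_i + A_i(s))\psi_t^{(n)}(s_\eps),\, D_i\psi_t^{(n)}(s)\bigr\rangle\,ds - \int_0^{s_\eps}\bigl\langle\psi_t^{(n)}(s_\eps),\, (\psi_t^{(n)} \wedge \psi_x^{(n)})\psi_x^{(n)}\bigr\rangle\,ds.
\end{equation*}

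Both resulting integrals are bounded by Cauchy-Schwarz; the crucial point is that the derivatives have been transferred onto the smoothed field $\psi_t^{(n)}(s_\eps)$, for which parabolic regularity (Proposition \ref{corbound}) supplies $\|\partial_x\psi_t^{(n)}(s_\eps)\|_{L^2} \lesssim_E s_\eps^{-1/2}$ together with $\int_0^\infty\|A_x(s)\|_{L^\infty}^2\,ds \lesssim_E 1$. These combine to give $\int_0^{s_\eps}\|(\partial_i + A_i(s))\psi_t^{(n)}(s_\eps)\|_{L^2}^2\,ds \lesssim_E 1$, and pairing against the ESD bound $\int_0^{s_\eps}\|D_i\psi_t^{(n)}\|_{L^2}^2\,ds \leq \delta$ produces a bound $\lesssim_E \sqrt{\delta}$ for the first integral. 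The quintic integral is estimated, via $|(\psi_t \wedge \psi_x)\psi_x| \leq |\psi_t\wedge\psi_x||\psi_x|$, by $\|\psi_t^{(n)}(s_\eps)\|_{L^2}\bigl(\int_0^{s_\eps}\|\psi_x^{(n)}\|_{L^\infty}^2\,ds\bigr)^{1/2}\bigl(\int_0^{s_\eps}\|\psi_t^{(n)} \wedge \psi_x^{(n)}\|_{L^2}^2\,ds\bigr)^{1/2} \lesssim_E \sqrt{\delta}$, again using the $L^1_s$ bound on $\|\psi_x^{(n)}\|_{L^\infty}^2$ from Proposition \ref{corbound}. Summing all contributions gives $\|\psi_t^{(n)}(0) - \psi_t^{(n)}(s_\eps)\|_{L^2}^2 \lesssim_E \delta + \sqrt{\delta} \lesssim_E \sqrt{\delta}$; the choice $\delta \sim \eps^2$ (with appropriate $E$-dependent factor) then delivers the claimed $\lesssim_E \eps$ bound.

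The main obstacle is the need to avoid the quantity $\|\partial_x\psi_t^{(n)}(0)\|_{L^2}$, which is determined by the Schwartz structure of the initial data and is \emph{not} uniformly bounded in $n$ by the energy alone. A naive integration by parts onto $\psi_t^{(n)}(0)$ would produce this uncontrolled term; the trick is to pair the heat evolution against the smoothed field $\psi_t^{(n)}(s_\eps)$ instead, so that parabolic regularity furnishes the required uniform-in-$n$ bound.
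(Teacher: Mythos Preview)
Your proof is correct and follows essentially the same route as the paper: both arguments reduce via the polarisation identity to controlling the cross term $\langle \psi_t^{(n)}(s_\eps), \psi_t^{(n)}(s_\eps) - \psi_t^{(n)}(0)\rangle$, differentiate in $s$, apply the covariant heat equation \eqref{psit-eq}, integrate by parts once so that a covariant derivative lands on the smoothed field $\psi_t^{(n)}(s_\eps)$ (for which parabolic regularity gives a uniform $s_\eps^{-1/2}$ bound), and close using the smallness of $\int_0^{s_\eps} \ESD\,ds$. The only cosmetic difference is in the H\"older splitting of the curvature term $(\psi_t \wedge \psi_x)\psi_x$: the paper places $\psi_t$ in $L^4$ via Gagliardo--Nirenberg and the diamagnetic inequality, whereas you use the $\|\psi_t \wedge \psi_x\|_{L^2}$ component of $\ESD$ directly together with $\int_0^\infty \|\psi_x\|_{L^\infty}^2\,ds \lesssim_E 1$, which is marginally cleaner.
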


\begin{proof}  Write $\Psi^{(n)}_t(x) := \psi^{(n)}_t(s_\eps,x)$.
From \eqref{psits}, \eqref{esd-def} we have
$$ \int_{\R^2} |\Psi^{(n)}_t|^2\ dx = \int_{\R^2} |\psi^{(n)}_t(0,x)|^2\ dx + O( \int_0^{s_\eps} \ESD(\phi^{(n)}_0, \phi^{(n)}_1)(s)\ ds ).$$
From \eqref{esd-n} we thus see that if we choose $s_\eps$ small enough (depending on $\eps, C$), then
$$ \int_{\R^2} |\Psi^{(n)}_t|^2\ dx = \int_{\R^2} |\psi^{(n)}_t(0,x)|^2\ dx + O( \eps )$$
for all sufficiently large $n$.  By the cosine rule, it will thus suffice to show that
$$ \int_{\R^2} \psi^{(n)}_t(0,x) \cdot \Psi^{(n)}_t(x)\ dx = \int_{\R^2} |\Psi^{(n)}_t|^2\ dx + O_E( \eps )$$
for all sufficiently large $n$.  Write 
$$ F^{(n)}(s) := \int_{\R^2} \psi^{(n)}_t(s,x) \cdot \Psi^{(n)}_t(x)\ dx,$$
then by the fundamental theorem of calculus it suffices to show that
$$ \int_0^{s_\eps} |\partial_s F^{(n)}(s)|\ ds \ll_E \eps$$
for all sufficiently large $n$.

Applying \eqref{psit-eq} and integrating by parts, we obtain
$$
|\partial_s F^{(n)}(s)| \lesssim
\int_{\R^2} |D_x \psi^{(n)}_t| |D_x \Psi^{(n)}_t| + |\psi^{(n)}_x|^2 |\psi^{(n)}_t| |\Psi^{(n)}_t|\ dx.$$
Applying H\"older's inequality and \eqref{esd-def} we conclude
\begin{align*}
|\partial_s F^{(n)}(s)| &\lesssim \ESD(\phi^{(n)}_0, \phi^{(n)}_1)(s)^{1/2} \| D_x \Psi^{(n)}_t \|_{L^2_x(\R^2)}\\
&\quad + \| \psi^{(n)}_t(s) \|_{L^4_x(\R^2)} \| \psi^{(n)}_x \|_{L^\infty_x(\R^2)}^{1/2} \| \psi^{(n)}_x \|_{L^2_x(\R^2)}^{3/2} \| \Psi^{(n)}_t \|_{L^\infty_x(\R^2)}.
\end{align*}
From Proposition \ref{corbound} we have the estimates
\begin{align*}
\| \Psi^{(n)}_t \|_{L^\infty_x(\R^2)}, \| D_x \Psi^{(n)}_t \|_{L^2_x(\R^2)} &\lesssim_E s_\eps^{-1/2} \\
\| \psi^{(n)}_x(s) \|_{L^2_x(\R^2)} &\lesssim_E 1 \\
\| \psi^{(n)}_x(s) \|_{L^\infty_x(\R^2)} &\lesssim_E s^{-1/2}.
\end{align*}
Also, from the diamagnetic inequality $|\partial_x |\psi^{(n)}_t|| \leq |D_x \psi^{(n)}_t|$ and \eqref{esd-def} we have
$$ \| \partial_x |\psi^{(n)}_t| \|_{L^2_x(\R^2)} \lesssim \ESD(\phi^{(n)}_0, \phi^{(n)}_1)(s)^{1/2};$$
meanwhile, from Proposition \ref{corbound} we have $\| |\psi^{(n)}_t| \|_{L^2_x(\R^2)} \lesssim_E 1$.  From the Gagliardo-Nirenberg inequality we conclude
$$ \| \psi^{(n)}_t \|_{L^4_x(\R^2)} \lesssim_E \ESD(\phi^{(n)}_0, \phi^{(n)}_1)(s)^{1/4}.$$
Putting all these estimates together, we conclude
$$
|\partial_s F^{(n)}(s)| \lesssim_E s_\eps^{-1/2} \ESD(\phi^{(n)}_0, \phi^{(n)}_1)(s)^{1/2} 
+ s_\eps^{-1/2} s^{-1/4} \ESD(\phi^{(n)}_0, \phi^{(n)}_1)(s)^{1/4}.$$
The claim then follows from \eqref{esd-n} and H\"older's inequality.
\end{proof}

Since the $\psi^{(n)}_t(s_\eps,x)$ were already convergent in $L^2_x(\R^2)$ for each $\eps > 0$, we conclude from the triangle inequality that the $\psi^{(n)}_t(0,x)$ are Cauchy in $L^2_x(\R^2)$, and the claim follows.  The proof of Proposition \ref{precom} is now complete.

\end{document}